\newcounter{indicee}
\newcounter{indiceK}
\newcounter{indiceC}
\def\omega{{\mathcal O}}
\def\R{{\mathbb  R}}
\def\l{\lambda}
\def\H{\mathcal{H}}
\newcommand{\D}{\mathcal{D}}
\newcommand{\CC}{\mathbb{C}}
\newcommand{\NN}{\mathbb{N}}
\newcommand{\RR}{\mathbb{R}}
\newcommand{\FF}{\mathbb{F}}
\newcommand{\h}{\hat}
\newcommand{\be}{\begin{eqnarray}}
\newcommand{\ee}{\end{eqnarray}}
\newcommand{\ben}{\begin{eqnarray*}}
\newcommand{\bc}{\begin{cases}}
\newcommand{\ec}{\end{cases}}
\newcommand{\een}{\end{eqnarray*}}
\newcommand{\ba}{\begin{array}}
\newcommand{\ea}{\end{array}}
\newcommand{\beq}{\begin{equation}}
\newcommand{\eeq}{\end{equation}}
\newcommand{\beqn}{\begin{equation*}}
\newcommand{\eeqn}{\end{equation*}}
\newcommand{\izl}{\int_0^L}
\newcommand{\izt}{\int_0^T}
\newcommand{\lzt}{L^2(0,T)}
\newcommand{\lzl}{L^2(0,L)}
\newcommand{\va}{\varphi}
\newcommand{\noi}{\noindent}
\newtheorem{theo}{Theorem}[section]
\newtheorem{prop}[theo]{Proposition}
\newtheorem{lemm}[theo]{Lemma}
\newtheorem{open}{Open Problem}[section]
\newtheorem{rem}[theo]{Remark}
\title{Boundary Controllability of the Korteweg-de Vries Equation on a Bounded Domain}
\author{
Eduardo Cerpa
\\ {\small Departamento de Matem\'atica} \\ {\small Universidad T\'ecnica Federico Santa Mar\'ia}\\
 {\small Casilla 110-V, Valpara\'iso, Chile} \\
 {\small email: eduardo.cerpa@usm.cl }\\ \qquad
\\Ivonne Rivas\\
{\small Instituto de Matem\'atica Pura e Aplicada (IMPA)} \  \\
 {\small Rio de Janeiro 22460-320, Brazil} \\
{\small
email: ivonriv@impa.br} \\
\quad\\ Bing-Yu Zhang \\
{\small Department of Mathematical Sciences} \\ {\small University
of Cincinnati}
\\ {\small Cincinnati,
Ohio 45221, USA} \\ {\small and} \\ {\small Yangtze Center of Mathematics}\\ {\small Sichuan University} \\  {\small Chengdu, China} \\
{\small email: zhangb@ucmail.uc.edu}}
\date{September 16, 2012}
\begin{document}

\maketitle
\newpage

\begin{abstract}
This paper is devoted to study boundary  controllability of the
Korteweg-de Vries equation posed on a finite interval, in which,
because of the third-order character of the equation, three boundary
conditions are required to secure the well-posedness of the system.
We consider the cases where one, two, or all three of those boundary
data are employed as boundary control inputs.  The system is first
linearized around the origin and the corresponding linear system is
shown to be exactly boundary controllable if using two or three
boundary control inputs. In the case where only one control input is
allowed to be used, the linearized system is known to be only
\emph{null} controllable if the single control input acts on the
left end of the spatial domain. By contrast, if the single control
input acts on the right end of the spatial domain, the linearized
system is exactly controllable if and only if the length of the
spatial domain does not belong to a set of \emph{critical values}.
Moreover, the nonlinear system is shown to be locally exactly
boundary controllable via \emph{contraction mapping principle} if
the associated linearized system is exactly controllable.
 \end{abstract}
\vspace{0.2 cm}

{\bf Key words.} Boundary control, Exact controllability, the
Korteweg-de Vries equation, nonlinear systems

\vspace{0.2 cm} {\bf AMS subject classifications.} 93B05, 35Q53,
35Q53

\vspace{.2cm} {\bf Acknowledgments:} Eduardo Cerpa was partially
supported by Fondecyt Grant 11090161 and Programa Basal CMM, U. de
Chile, Ivonne Rivas was partially supported by CNPq and the Taft
Research Center at the University of Cincinnati,  and Bingyu Zhang
was partially supported by a grant from the Simons Foundation
(\#201615 to Bingyu Zhang) and NSF of China (\# 11231007).
%%%%%%%%%%%%%%%%%%%%%%%%%%%%%%%%%%%%%%%%%%%%%%%%%%%%%%%%%%%%%%%%%%%%%%%%%%%%%%%%%%
%%%%%%%%%%%%%%%%%%%%%%%%%%%%%%%%%%%%%%%%%%%%%%%%%%%%%%%%%%%%%%%%%%%%%%%%%%%%%%%%%%
\section{Introduction}\label{intro}

\quad
 In this paper we study a class of  distributed parameter  control systems described by
 the Korteweg-de Vries (KdV) equation posed
on a finite domain with nonhomogeneous boundary conditions:
\be\label{kdv}
    \begin{cases}
y_t + y_x + y_{xxx} + y y_x =0, \quad (x, t) \in (0,L)\times (0,T),\\
y(0,t)=h_1(t), \, y_x(L,t)=h_2(t), \, y_{xx}(L, t)=h_3(t), \quad t\in (0,T). \\
\end{cases} \ee \noi
This system  can be considered as a  model for propagation of
surface water waves in the situation where a wave-maker is putting
energy in a finite-length channel from the left $(x=0)$ while the
right end $(x=L)$ of the channel is free (corresponding to the case
 $h_2=h_3=0$) (see \cite{c-g-1}). Since the work of  Colin
and Ghidaglia in the late 1990's \cite{c-g-1,c-g-2, ColGhi01}, the
system (\ref{kdv}) has been mainly studied for its well-posedness in
the classical Sobolev space $H^s (0,L)$ \cite{kz,ruz}. So far, the
system is   known to be locally well-posed in the space
 $H^s (0,L)$ for any $s>-\frac34$ as stated in the following theorem.

\medskip
\noindent {\bf Theorem A }\cite{10RiKZ}: \emph{Let
$s>-\frac34$, $T>0$  and $r
>0$ be given with
\[ s\ne \frac{2j-1}{2}, \quad j=1,2,3,\cdots . \]
There exists  $T^*
>0$ such that for given $s-$compatible
\footnote{The reader is referred to \cite{10RiKZ} for the precise
definition of $s-$compatibility for the IBVP (\ref{kdv}).} data
$$y_0 \in H^s (0,L), \quad h_1
 \in H^{\frac{s+1}{3}} (0,T), \quad  h_2\in H^{\frac{s}{3}}(0,T),
 \quad h_3\in H^{\frac{s-1}{3}}(0,T) $$  satisfying
 \[ \|y_0 \|_{H^s(0,L)} +\| h_1\|_{H^{\frac{s+1}{3}}(0,T)} +\|h_2\|_{H^{\frac{s}{3}}(0,T)}
 +\|h_3\|_{H^{\frac{s-1}{3}}(0,T)} \leq r,\]
 then  (\ref{kdv}) admits a unique solution
 \[ y\in C([0,T^*]; H^s(0,L))\cap L^2 (0,T^*; H^{s+1} (0,L)) \]
 satisfying the initial condition
 \[ y|_{t=0}=y_0 .\]
 Moreover, the solution $y$ depends Lipschitz continuously on $y_0 $ and
 $h_j, j=1,2,3$ in the corresponding spaces.
}

\medskip
In this paper we are   interested in studying the IBVP (\ref{kdv})
from a control point of view:

\smallskip

 \emph{How solutions of
the system (\ref{kdv}) can be influenced by choosing appropriate
control inputs $h_j, \ j=1,2,3$?}

\smallskip

 In particular, we are
concerned with the following  exact boundary control problem.

\medskip
{\bf Exact Control Problem:}  \emph{Given $T>0$ and $y_0 , \ y_T
\in L^2 (0,L)$, can one find appropriate control inputs $h_j, \
j=1,2,3$ such that the corresponding solution $y$ of (\ref{kdv})
satisfies
\[ y|_{t=0}= y_0, \qquad y|_{t=T} =y_T?\]}

\medskip

Boundary control problems for  the KdV equation on a finite domain
have been extensively studied in the past (see
\cite{zh,97rosier,Za,04rosier,04coron-crepeau,07cerpa,08glass-guerrero,09cerpa-crepeau,
Za-survey} and the references therein). Most of those works have
been focused on the following system
 \be\label{kdv2}\left\{\ba{l}
u_t + u_x + u_{xxx} + u u_x =0,  \ (x,t)\in (0,L)\times (0,T), \\
u(0,t)=g_1(t), \, u(L,t)=g_2(t), \, u_{x}(L,t)=g_3(t),\quad t\in (0,T)
 \ea \right.\ee which possess a different
set of boundary conditions than those of the system (\ref{kdv}).
Controllability of this system was first studied by Rosier
\cite{97rosier} in 1997 using only one control input $g_3$:
 \be\label{kdv2-1}\left\{\ba{l}
u_t + u_x + u_{xxx} + u u_x =0,  \ (x,t)\in (0,L)\times (0,T), \\
u(0,t)=0, \, u(L,t)=0, \, u_{x}(L,t)=g_3(t), \quad t\in (0,T).
 \ea \right.\ee
 It was discovered rather surprisingly that whether the associated linear
 system
 \be\label{kdv2-1-1}\left\{\ba{l}
u_t + u_x + u_{xxx}  =0,  \ (x,t)\in (0,L)\times (0,T), \\
u(0,t)=0, \, u(L,t)=0, \, u_{x}(L,t)=g_3(t), \quad t\in (0,T),
 \ea \right.\ee
 is exactly controllable depends on the length $L$ of the spatial domain $(0,L)$.  More precisely,
Rosier \cite{97rosier} showed that \emph{the linear system is
exactly controllable in the space $L^2 (0,L)$ if and only if
\begin{equation} \label{NN-1} L \notin {\cal S}:= \left\{ 2 \pi
\sqrt{\frac{k^2+kl+l^2}{3}} ;\, k,l \in \NN^* \right\}.
\end{equation}}
 %Any $L\in {\cal S}$ is then called \emph{critical length} related to the system (\ref{kdv2-1-1}).
 With the linear result in hand and using the
  \emph{contraction mapping principle}, Rosier \cite{97rosier} showed further that the nonlinear
  system (\ref{kdv2-1}) is locally exactly controllable
 in the space $L^2 (0,L)$ so long as $L\notin {\cal S}$.

\medskip
\noindent {\bf Theorem B} (Rosier \cite{97rosier}): \emph{Let
$T>0$ be given and assume $L\notin {\cal S}$. There exists  $r>0$
such that for any $u_0, \ u_T \in L^2 (0,L)$ with \[ \|u_0 \|_{L^2
(0,L)}+\|u_T \|_{L^2 (0,L)} \leq r,\] there exists $g_3\in L^2
(0,T) $  such that the system (\ref{kdv2-1})  admits a unique
solution
\[ u\in C([0,T]; L^2 (0,L))\cap L^2 (0,T; H^1 (0,L))\]
satisfying
\[ u|_{t=0}= u_0, \qquad u|_{t=T} =u_T.\]}

\medskip
The system (\ref{kdv2}) was later studied by Glass and Guerrero
\cite{09glass-guerrero} for its  boundary controllability using only
$g_2$ as a control input.
 \be\label{kdv2-2}\left\{\ba{l}
u_t + u_x + u_{xxx} + u u_x =0,  \ (x,t)\in (0,L)\times (0,T), \\
u(0,t)=0, \, u(L,t)=g_2(t), \, u_{x}(L,t)=0, \quad t\in (0,T).
 \ea \right.\ee
 They showed that   the corresponding
 linear system
  \be\left\{\ba{l}
u_t + u_x + u_{xxx}  =0,  \ (x,t)\in (0,L)\times (0,T), \\
u(0,t)=0, \, u(L,t)=g_2(t), \, u_{x}(L,t)=0, \quad t\in (0,T),
 \ea \right.\ee
  is exactly controllable in the
space $L^2 (0,L)$ if and only if $L\notin {\cal N}$ where
\begin{eqnarray} {\cal N} & = & \Big\{ L\in \RR^+:
L^2=-(a^2+ab+b^2) \ \text{with} \ a,b \in \CC  \nonumber
\\ & & \qquad    \qquad  \text{satisfying}
  \quad
 ae^a  =   be^b
= -(a+b) e^{-(a+b)} \Big\}. \label{n-1}\end{eqnarray} Then the
nonlinear system (\ref{kdv2-2}) was shown to be  locally exactly
controllable in the space $L^2 (0,L)$ if $L \notin {\cal N}$.

\medskip
\noindent {\bf Theorem C} (Glass and Guerrero
\cite{09glass-guerrero})   \emph{Let $T>0$ and $L\notin {\cal N}$
be given. There exists  $r>0$ such that for any $u_0, \ u_T \in
L^2 (0,L)$ with \[ \|u_0 \|_{L^2 (0,L)}+\|u_T \|_{L^2 (0,L)} \leq
r,\] one can find    $g_2\in H^{\frac16-\epsilon} (0,T) $ for any $\epsilon>0$ such
that the system (\ref{kdv2-2})   admits a solution
\[ u\in C([0,T]; L^2 (0,L))\cap L^2 (0,T; H^1 (0,L))\]
satisfying
\[ u|_{t=0}= u_0, \qquad u|_{t=T} =u_T.\]}

\medskip
While the critical length phenomenon occurs when a single control
input (either $g_2$ or $g_3$) is used,  it will not happen, however,
if more than one control inputs are allowed to be used. It was already
pointed out  in \cite{97rosier} that the linear system associated to
(\ref{kdv2}) is exactly controllable  for any $L>0$ if both $g_2 $
and $g_3$ are allowed to be used as control inputs. Moreover,
 the nonlinear system
\be\label{kdv2--2}\left\{\ba{l}
u_t + u_x + u_{xxx} + u u_x =0,  \ (x,t)\in (0,L)\times (0,T), \\
u(0,t)=0, \, u(L,t)=g_2(t), \, u_{x}(L,t)=g_3(t), \quad t\in (0,T),
 \ea \right.\ee was shown in \cite{97rosier} to be locally exactly
 controllable in $L^2 (0,L)$.

\medskip
\noindent {\bf Theorem D} (Rosier \cite{97rosier}): \emph{Let $T>0 $ and $L>0$ be given and
$k>0$ be an integer. There exists  $\delta >0$ such that for any $u_0, \
u_T \in
L^2 (0,L)$ with \[ \|u_0 \|_{L^2 (0,L)}+\|u_T \|_{L^2
(0,L)} \leq \delta ,\] there exist  $g_2\in H^k_0 (0,T)$ and
$g_3\in L^2 (0,T) $ such that the system (\ref{kdv2--2}) admits a
solution
\[ u\in C([0,T]; L^2 (0,L))\cap L^2 (0,T;H^1(0,L))\]
satisfying
\[ u|_{t=0}= u_0, \qquad u|_{t=T} =u_T.\]}

In \cite{09glass-guerrero} Glass and Guerrero considered the
system (\ref{kdv2}) using $g_1$ and $g_2$ as control inputs.
\be\label{kdv2-3}\left\{\ba{l}
u_t + u_x + u_{xxx} + u u_x =0,  \ (x,t)\in (0,L)\times (0,T), \\
u(0,t)=g_1, \, u(L,t)=g_2(t), \, u_{x}(L,t)=0, \quad t\in (0,T),
 \ea \right.\ee
 and showed that the system is also locally exactly controllable for any
 $L>0$.

 \medskip
 \noindent
 {\bf Theorem E} (Glass and Guerrero
\cite{08glass-guerrero}) \emph{Let $T>0$ and $L>0$ be given. There
exists $r>0$ such that for any $u_0, \ u_T \in L^2 (0,L)$ with \[
\|u_0 \|_{L^2 (0,L)}+\|u_T \|_{L^2 (0,L)} \leq r,\] there exist
$g_1$ and  $g_2\in L^2 (0,T) $ such that the system (\ref{kdv2-3})
admits a solution
\[ u\in C([0,T]; H^{-1} (0,L))\cap L^2 (0,T; L^2 (0,L))\]
satisfying
\[ u|_{t=0}= u_0, \qquad u|_{t=T} =u_T.\]}

\medskip
Another interesting  controllability result  regarding the system
(\ref{kdv2}) is that it is only \emph{null controllable} if the control acts from the left side of the spatial
domain $(0,L)$, which was proved by Rosier \cite{04rosier}, Glass
and Guerrero \cite{08glass-guerrero}.

\medskip\emph{
\noindent {\bf Theorem F}(\cite{04rosier, 08glass-guerrero})   Let $T>0$ and $L>0$ be given.  Let
$$v\in C([0,T];L^2 (0,L))\cap L^2 (0,T;H^1 (0,L))$$ satisfy
} \beq \begin{cases}
v_t+v_x+vv_x+v_{xxx}=0, \ v(x,0)=v_0(x), \qquad  (x,t)\in (0,L)\times (0,T), \\
 v(0,t)=v(L,t)=v_x(L,t)=0,\quad t\in(0,T). \\
 \end{cases}
  \eeq
\emph{Then, there exists  $\delta >0$ such that for any $u_0 \in L^2
(0,L)$
    with}
    \[ \|u_0 -v_0\|_{L^2 (0,L)} \leq \delta , \]
   \emph{ there exists $g_1\in H^{\frac12 -\epsilon}(0,T)$ for any $\epsilon
    >0$ such that the system (\ref{kdv2}) admits a solution}
    $$u\in C([0,T]; L^2 (0,L))\cap L^2(0,T;H^1(0,L))$$ satisfying
      \[ u|_{t=0}=u_0, \qquad u|_{t=T} =v|_{t=T} .\]

While boundary controllability of the system (\ref{kdv2}) has been
well studied, there is very few results for the system (\ref{kdv}).
To our knowledge, the only known result is due to Guilleron
\cite{guilleron}. He has considered the   system (\ref{kdv}) with
$h_2=h_3=0$ and shown the corresponding linear system
\[
\begin{cases}
y_t+y_x+y_{xxx}=0, \qquad (x,t)\in (0,L)\times (0,T),\\
y(0,t)= h_1 (t), \qquad y_x(L,t)=y_{xx}(L,t)=0 \end{cases} \] is
null controllable  by applying a Carleman estimates approach to
obtain the needed observability inequality.

The purpose of this paper is to fill the gap and to determine if the
system (\ref{kdv}) possesses controllability results similar to
those established  for system (\ref{kdv2}). Naturally one would like
to try the same approaches that have worked effectively for system
(\ref{kdv2}). However, one will encounter some difficulties that
demand special attention and some new tools will be needed. In
particular, when we  use only $h_2 $ as a control input, the linear
system associated to (\ref{kdv}) is \be\label{kdv-11}
    \begin{cases}
y_t + y_x + y_{xxx} =0, \quad y(x,0)=y_0 (x), \quad (x, t) \in (0,L)\times (0,T),\\
y(0,t)=0, \, y_x(L,t)=h_2(t), \, y_{xx}(L, t)=0, \quad t\in(0,T) \\
\end{cases} \ee \noi
 and its adjoint system is given by
  \be\label{aj-1}
    \begin{cases}
\va _t + \va_x + \va_{xxx} =0, \quad \va(x,T)=\va_T (x), \quad (x, t) \in (0,L)\times (0,T),\\
\va (L,t)+\va _{xx} (L,t)=0, \, \va _x(0,t)=0, \, \va (0, t)=0, \quad t\in(0,T). \\
\end{cases} \ee \noi
It is well-known that the exact controllability of system
(\ref{kdv-11}) is equivalent to the following observability inequality for the adjoint
system \eqref{aj-1}  \be\label{e-11}
 \|\va
_T\|_{L^2 (0,L)} \leq C \|\va _x (L, \cdot)\|_{L^2 (0,T)}. \ee
However, the usual multiplier method  and compactness arguments as
those used in dealing with the control of system (\ref{kdv2-1-1})
only lead  to \be\label{e-2} \|\va _T\|_{L^2 (0,L)} \leq C_1 \|\va
_x (L, \cdot)\|_{L^2 (0,T)} + C_2 \| \va (L,\cdot )\|_{L^2 (0,T)}.
\ee
 How to remove  the extra term in (\ref{e-2}) presents  a
challenge and demands a  new tool. This new   tool turns out to be
the hidden  regularity (or the sharp Kato smoothing property
\cite{Kato83}) for solutions of the KdV equation. Specially, as we
will demonstrate later in this paper, for solutions of the system
(\ref{aj-1}), the following inequality holds  \be \label{e-3} \sup
_{0<x<L} \| \va (x, \cdot )\|_{H^{\frac13} (0,T)} \leq C \|\va
_T\|_{L^2 (0, L)} ,\ee which will play  a crucial role in validating
the observability estimate (\ref{e-11}).

 In this paper, we will first consider  case that only   $h_2$ is employed  as a control input and show that the system
\be\label{kdv-1.1}
    \begin{cases}
y_t + y_x + y_{xxx} + y y_x =0,  (x, t) \in (0,L)\times (0,T),\\
y(0,t)=0, \, y_x(L,t)=h_2(t), \, y_{xx}(L, t)=0,   \quad t\in(0,T)
\end{cases} \ee \noi
 is \emph{locally exactly controllable} as
long as  $L\notin \FF$  where \beq\label{FF} \FF = \left\{ L\in
\RR^+: L^2=-(a^2+ab+b^2) \ \text{with} \ a,b \in \CC \
\text{satisfying}
  \quad
\frac{e^a}{a^2}= \frac{e^b}{b^2}
=\frac{e^{-(a+b)}}{(a+b)^2}\right\}.\eeq

\begin{theo}\label{theo-h-2} Let $T>0$ and $L\notin \FF$ be given.  There exists  $\delta >0$ such
that for any $y_0, \ y_T \in L^2 (0,L)$ with \[ \|y_0\|_{L^2
(0,L)}+\|y_T\|_{L^2 (0,L)} \leq \delta ,\] one can find $h_2\in
L^2 (0,T)$ such that the system (\ref{kdv-1.1}) admits a unique
solution
\[ y\in C([0,T]; L^2 (0,L))\cap L^2 (0,T; H^1 (0,L))\]
satisfying
\[ y|_{t=0}= y_0, \qquad y|_{t=T} =y_T.\]
\end{theo}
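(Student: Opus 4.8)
The plan is to follow the two-stage strategy behind Theorem B: first establish exact controllability of the linear system (\ref{kdv-11}), then transfer it to the nonlinear system (\ref{kdv-1.1}) by a contraction mapping argument treating $yy_x$ as a forcing term. By the classical duality between controllability and observability, exact controllability of (\ref{kdv-11}) in $L^2(0,L)$ is equivalent to the observability inequality (\ref{e-11}) for the adjoint system (\ref{aj-1}), so the core task is to prove (\ref{e-11}).

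First I would apply the multiplier method to solutions of (\ref{aj-1}) to obtain the weaker estimate (\ref{e-2}) carrying the unwanted boundary term $\|\va(L,\cdot)\|_{L^2(0,T)}$, and then remove that term by a compactness--uniqueness argument. Arguing by contradiction, a failure of (\ref{e-11}) produces a sequence of terminal data $\va^n_T$ with $\|\va^n_T\|_{L^2(0,L)}=1$ and $\|\va^n_x(L,\cdot)\|_{L^2(0,T)}\to 0$. The hidden regularity estimate (\ref{e-3}) is exactly what supplies the compactness needed to pass to a strongly convergent subsequence $\va^n_T\to\va_T\ne 0$; the limiting solution $\va$ of (\ref{aj-1}) then also satisfies the extra condition $\va_x(L,\cdot)=0$. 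The set of such terminal data is a semigroup-invariant, finite-dimensional subspace, so it must contain an eigenfunction, and the question collapses to whether the stationary problem can carry one more boundary condition than it generically allows.

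This overdetermined eigenvalue problem is where the set $\FF$ is born. Looking for $\psi$ solving $\lambda\psi+\psi'+\psi'''=0$ on $(0,L)$ with $\psi(0)=\psi'(0)=0$, $\psi(L)+\psi''(L)=0$ and the extra condition $\psi'(L)=0$, I would write $\psi=\sum_{j=1}^3 c_j e^{r_jx}$ with $r_1,r_2,r_3$ the roots of $r^3+r+\lambda=0$, so that $r_1+r_2+r_3=0$ and $r_1r_2+r_1r_3+r_2r_3=1$. The two conditions at $x=0$ fix $(c_1,c_2,c_3)$ up to scale as the cyclic differences $c_j\propto r_{j+1}-r_{j+2}$, and using $1+r_j^2=-\lambda/r_j$ the two conditions at $x=L$ become orthogonality of $(c_je^{r_jL})_j$ to both $(r_j)_j$ and $(r_j^{-1})_j$; forcing $(c_je^{r_jL})_j$ to be parallel to the cross product of these two vectors yields, after using $r_1+r_2+r_3=0$ and $r_1r_2r_3=-\lambda$, the relation $e^{r_jL}/r_j^2=\text{const}$. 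Setting $a=r_1L,\ b=r_2L$, so $r_3L=-(a+b)$, the relation $r_1r_2+r_1r_3+r_2r_3=1$ becomes $L^2=-(a^2+ab+b^2)$ while the constant-ratio relation becomes
\[
\frac{e^a}{a^2}=\frac{e^b}{b^2}=\frac{e^{-(a+b)}}{(a+b)^2},
\]
which is precisely the definition of $\FF$ in (\ref{FF}). Hence for $L\notin\FF$ no such eigenfunction exists, the uniqueness step closes, (\ref{e-11}) holds, and the linear system (\ref{kdv-11}) is exactly controllable.

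With linear controllability in hand I would build a bounded control map $\Gamma$ producing, for each prescribed pair of endpoints, a control $h_2$ steering (\ref{kdv-11}), and set up a fixed point map $\mathcal{F}$ on a small ball of $C([0,T];L^2(0,L))\cap L^2(0,T;H^1(0,L))$: given $y$, solve the linear equation with forcing $-yy_x$ and with $h_2$ chosen via $\Gamma$ to hit $y_0$ and $y_T$. The essential analytic ingredient is the bilinear estimate controlling $\|(y^2)_x\|$ in the appropriate negative-regularity norm by $\|y\|^2$ in the solution space, a consequence of the Kato smoothing theory underlying Theorem A; for $\delta$ small this makes $\mathcal{F}$ a contraction whose fixed point solves (\ref{kdv-1.1}) with the required endpoints. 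I expect the main obstacle to be precisely the removal of the extra term in (\ref{e-2}): verifying that (\ref{e-3}) furnishes exactly the compactness required, and that the overdetermined determinant condition simplifies cleanly to the ratio relation defining $\FF$ rather than to Rosier's $\mathcal{S}$ or the Glass--Guerrero set $\mathcal{N}$, is the delicate point and the source of the distinctive $a^2,b^2,(a+b)^2$ denominators.
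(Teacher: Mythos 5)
Your proposal is correct and follows essentially the same route as the paper: exact controllability of the linear system via duality and a compactness--uniqueness argument in which the hidden regularity $\psi(L,\cdot)\in H^{\frac13}(0,T)$ (Theorem \ref{adprop1}) supplies exactly the compactness needed to absorb the extra boundary term in \eqref{e-2}, with the uniqueness step reduced to the overdetermined eigenvalue problem whose solvability characterizes $\FF$ (Lemma \ref{next}), followed by the contraction mapping scheme of Section \ref{sec-non} treating $yy_x$ as a forcing term and correcting the target by the Duhamel contribution. Your cross-product derivation of the relation $e^{r_jL}/r_j^2=\mathrm{const}$ is merely a tidier version of the paper's row reduction in the Appendix, not a genuinely different argument.
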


\begin{rem} It can be proven that the set $\FF$ is nonempty and countable.
 The proof follows \cite{09glass-guerrero} where the authors proved that the set $\cal{N}$ (see \eqref{n-1}) is nonempty and countable.
\end{rem}

Instead of employing control input $h_2$, one can just
use the control input $h_3$: \be\label{kdv-1.2}
    \begin{cases}
y_t + y_x + y_{xxx} + y y_x =0,  (x, t) \in (0,L)\times (0,T),\\
y(0,t)=0, \, y_x(L,t)=0, \, y_{xx}(L, t)=h_3(t),  \quad t\in(0,T).
\end{cases} \ee \noi
The corresponding system  is also locally exactly controllable if
the length $L$ of the interval $(0,L)$ does not belong to the set
${\cal N}$ as defined in (\ref{n-1}).
\begin{theo}\label{theo-h-3} Let $T>0$ and $L\notin  {\cal N}$ be given.  There exists  $\delta >0$ such
that for any $y_0, \ y_T \in L^2 (0,L)$ with \[ \|y_0\|_{L^2
(0,L)}+\|y_T\|_{L^2 (0,L)} \leq \delta ,\] there exists $h_3\in
H^{-\frac13} (0,T)$ such that the system (\ref{kdv-1.2}) admits a
unique solution
\[ y\in C([0,T]; L^2 (0,L))\cap L^2 (0,T; H^1 (0,L))\]
satisfying
\[ y|_{t=0}= y_0, \qquad y|_{t=T} =y_T.\]
\end{theo}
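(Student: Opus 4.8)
The plan is to follow the linear-to-nonlinear scheme used throughout the paper. I first treat the linearization
\beq
\begin{cases}
y_t + y_x + y_{xxx} = 0, \quad y(x,0) = y_0(x),\\
y(0,t) = 0, \ y_x(L,t) = 0, \ y_{xx}(L,t) = h_3(t),
\end{cases}
\eeq
and reduce its exact controllability to an observability inequality for the adjoint system, obtained by the same integration-by-parts computation that produced \eqref{aj-1}. Pairing $\int_0^T\int_0^L(y_t+y_x+y_{xxx})\va\,dx\,dt$ and forcing every uncontrolled boundary trace to cancel leads to the adjoint problem
\beq
\begin{cases}
\va_t + \va_x + \va_{xxx} = 0, \quad \va(x,T) = \va_T(x),\\
\va(0,t) = 0, \ \va_x(0,t) = 0, \ \va(L,t) + \va_{xx}(L,t) = 0,
\end{cases}
\eeq
in which the control $h_3$ is paired against the trace $\va(L,\cdot)$. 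Since $h_3 \in H^{-\frac13}(0,T)$, exact controllability of the linear system is equivalent to the observability estimate $\|\va_T\|_{L^2(0,L)} \le C\,\|\va(L,\cdot)\|_{H^{\frac13}(0,T)}$.

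To establish this I would mirror the argument indicated for the $h_2$ problem. First, the sharp Kato smoothing property, i.e. the analogue of \eqref{e-3} for the present adjoint system, supplies the bound $\sup_{0<x<L}\|\va(x,\cdot)\|_{H^{\frac13}(0,T)} \le C\|\va_T\|_{L^2(0,L)}$; this is what makes the $H^{\frac13}$ topology the correct one and renders the pairing with $h_3 \in H^{-\frac13}$ meaningful. The multiplier method then yields only a weakened inequality of the form \eqref{e-2}, carrying an extra lower-order term $C_2\|\va(L,\cdot)\|_{L^2(0,T)}$ that is compact relative to $\|\va_T\|_{L^2(0,L)}$. Removing this term is the crux: arguing by contradiction and invoking the smoothing estimate for compactness, one reduces to the unique continuation statement that any adjoint solution with $\va(L,t)\equiv 0$ vanishes identically.

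The unique continuation step is handled by the usual spectral reduction: the space of terminal data whose observation vanishes is finite dimensional and invariant under the generator of the adjoint evolution, hence contains an eigenfunction $\psi$ solving $\lambda\psi + \psi' + \psi''' = 0$ on $(0,L)$. Because the adjoint condition $\va(L)+\va_{xx}(L)=0$ together with $\va(L)=0$ forces $\va_{xx}(L)=0$, the eigenfunction obeys the overdetermined boundary value problem $\psi(0)=\psi'(0)=\psi(L)=\psi''(L)=0$. This is precisely the overdetermined problem Glass and Guerrero analyzed for the control $g_2$ in \eqref{kdv2-2}; writing the three characteristic roots (which sum to zero, since the $\mu^2$ coefficient vanishes) as $a/L$, $b/L$, $-(a+b)/L$ and imposing nontrivial solvability reproduces the constraint $L^2=-(a^2+ab+b^2)$ together with a transcendental relation which, up to the sign symmetry $(a,b)\mapsto(-a,-b)$ that leaves $L^2=-(a^2+ab+b^2)$ invariant, coincides with $ae^a=be^b=-(a+b)e^{-(a+b)}$. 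Hence membership in the critical set is exactly that of ${\cal N}$ in \eqref{n-1}, and for $L\notin{\cal N}$ the problem has only the trivial solution, so unique continuation and the observability inequality follow.

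With linear exact controllability in hand, I would close the nonlinear problem by the contraction mapping principle. Writing the nonlinearity as $-\frac12(y^2)_x$ and treating it as a source, I would let $\Gamma$ send $w$ in a small ball of $C([0,T];L^2(0,L))\cap L^2(0,T;H^1(0,L))$ to the solution $y$ of the linear controlled equation whose control $h_3\in H^{-\frac13}(0,T)$ is selected, via the bounded control operator furnished by the observability inequality, so that $y|_{t=T}=y_T$ after compensating for the source $-\frac12(w^2)_x$. Using the sharp well-posedness and trace estimates of Theorem A together with the bilinear estimate for $\partial_x(w^2)$, one checks that $\Gamma$ maps a sufficiently small ball into itself and is a contraction provided $\delta=\|y_0\|_{L^2(0,L)}+\|y_T\|_{L^2(0,L)}$ is small enough; its fixed point is the desired solution and $h_3$ the desired control. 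I expect the genuine obstacle to lie entirely in the linear observability inequality, specifically in the removal of the extra boundary term: the delicate ingredients are the sharp Kato smoothing estimate, which closes the compactness argument in the correct $H^{\frac13}$ norm dictated by $h_3\in H^{-\frac13}$, and the spectral analysis of the overdetermined eigenvalue problem that isolates ${\cal N}$, after which the contraction step is routine.
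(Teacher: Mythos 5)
Your reduction to the adjoint system and the duality pairing $\int_0^L y(x,T)\va_T\,dx=\int_0^T h_3(t)\va(L,t)\,dt$ is correct, as is the target observability inequality $\|\va_T\|_{L^2(0,L)}\le C\|\va(L,\cdot)\|_{H^{1/3}(0,T)}$. The gap is in the compactness--uniqueness step, and it is fatal as stated. The multiplier identity for this adjoint system is \eqref{z-1}: it bounds $\|\va_T\|^2_{L^2(0,L)}$ by $\tfrac1T\|\va\|^2_{L^2(0,T;L^2(0,L))}+\|\va_x(L,\cdot)\|^2_{L^2(0,T)}+\|\va(L,\cdot)\|^2_{L^2(0,T)}$. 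When the observation is $\va_x(L,\cdot)$ (the $h_2$ problem, Lemma \ref{case3}), the leftover term $\|\va(L,\cdot)\|_{L^2(0,T)}$ is indeed compact, because the hidden regularity (Theorem \ref{adprop1}) places $\va(L,\cdot)$ in $H^{1/3}(0,T)$, which embeds compactly into $L^2(0,T)$. In your problem the roles are reversed: the observation is $\va(L,\cdot)$, so the term you must remove is $\|\va_x(L,\cdot)\|_{L^2(0,T)}$, not the term $C_2\|\va(L,\cdot)\|_{L^2(0,T)}$ you single out (that one is harmless, being dominated by the observation itself). For the first-derivative trace the hidden regularity gives only $H^{(1-1)/3}=L^2$, i.e.\ boundedness in $L^2(0,T)$ with no gain of smoothness and hence no compactness. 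In the contradiction argument the sequence $\va^n_x(L,\cdot)$ therefore need not converge strongly along any subsequence, the sequence $\va^n_T$ cannot be shown to be Cauchy, and the reduction to unique continuation (as well as the finite-dimensionality of the unobserved space that feeds your spectral step) never gets off the ground. Your spectral analysis of the overdetermined eigenvalue problem does identify the correct critical set ${\cal N}$, but the argument that is supposed to reach it fails.

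This non-compact boundary term is precisely why the paper does not prove Theorem \ref{theo-h-3} by a direct observability argument. Instead it transfers Glass and Guerrero's nonlinear result (Theorem C) for the right-Dirichlet-controlled system \eqref{6.1}, whose critical set is the same ${\cal N}$: the controlled trajectory $u$ there satisfies $u(0,t)=0$ and $u_x(L,t)=0$, so it is already a solution of \eqref{kdv-1.2} with $h_3:=u_{xx}(L,\cdot)$. Two ingredients make this legal: first, the control $g_2$ of Theorem C, known only to lie in $H^{\frac16-\epsilon}(0,T)$, is upgraded to $H^{\frac13}(0,T)$ (Proposition \ref{6:1}, proved by splitting $u=\kappa+\mu$ and invoking Lemma \ref{iff-1}, which says $\mu\in X_T$ if and only if $g_2\in H^{\frac13}(0,T)$); second, once $g_2\in H^{\frac13}(0,T)$, the sharp trace estimate of Proposition \ref{regular} applies and yields $h_3=u_{xx}(L,\cdot)\in H^{-\frac13}(0,T)$. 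Since Theorem C is already a nonlinear statement, no contraction argument is needed at all. If you insist on your direct route, you would have to prove the observability inequality with the non-compact term genuinely removed; that is essentially the hard content of the Glass--Guerrero paper and requires techniques beyond the compactness--uniqueness scheme you propose.
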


Similar to the system (\ref{kdv2}), the  critical length
phenomenon will not occur if more than one control inputs are
employed. For the system where $h_1$ and $h_2$ are used as control
inputs,
\be\label{kdv-1.3}
    \begin{cases}
y_t + y_x + y_{xxx} + y y_x =0,  (x, t) \in (0,L)\times (0,T),\\
y(0,t)=h_1(t), \, y_x(L,t)=h_2(t), \, y_{xx}(L, t)=0, \quad t\in(0,T)  \\
\end{cases} \ee \noi
we have the following local exact controllability result.

\begin{theo}\label{theo-h-1-2} Let $T>0$  and $L>0$ be given. There exists $\delta >0$ such that
for any $y_0, \ y_T \in L^2 (0,L)$ with \[ \|y_0\|_{L^2
(0,L)}+\|y_T\|_{L^2 (0,L)} \leq \delta ,\] one can find  $h_1\in
H^{\frac13}(0,T) $ and $h_2\in L^2 (0,T)$ such that the system
(\ref{kdv-1.3})  admits a unique solution
\[ y\in C([0,T]; L^2 (0,L))\cap L^2 (0,T; H^1 (0,L))\]
satisfying
\[ y|_{t=0}= y_0, \qquad y|_{t=T} =y_T.\]
\end{theo}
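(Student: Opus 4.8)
The plan is to follow the linearization-plus-fixed-point scheme already used for Theorems \ref{theo-h-2} and \ref{theo-h-3}, the essential new feature being that with \emph{two} control inputs the underlying linear system should be exactly controllable for \emph{every} $L>0$, with no exceptional set. First I would linearize (\ref{kdv-1.3}) about the origin and reduce, by the contraction mapping principle, the nonlinear statement to exact controllability in $L^2(0,L)$ of the linear system
\beq
\begin{cases}
y_t+y_x+y_{xxx}=0,\quad y(x,0)=y_0(x),\\
y(0,t)=h_1(t),\ y_x(L,t)=h_2(t),\ y_{xx}(L,t)=0,
\end{cases}
\eeq
using $(h_1,h_2)\in H^{\frac13}(0,T)\times L^2(0,T)$, the well-posedness and Lipschitz dependence being supplied by Theorem A.

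For the linear step I would use the Hilbert Uniqueness Method. Integrating by parts, the relevant adjoint system is
\beq
\begin{cases}
\varphi_t+\varphi_x+\varphi_{xxx}=0,\quad \varphi(x,T)=\varphi_T(x),\\
\varphi(0,t)=0,\ \varphi_x(0,t)=0,\ \varphi(L,t)+\varphi_{xx}(L,t)=0,
\end{cases}
\eeq
and the duality identity leaves exactly two boundary traces, $\varphi_{xx}(0,\cdot)$ paired against $h_1$ and $\varphi_x(L,\cdot)$ paired against $h_2$. Hence exact controllability is equivalent to the observability inequality
\beq
\|\varphi_T\|_{L^2(0,L)}^2\le C\Big(\|\varphi_{xx}(0,\cdot)\|_{H^{-\frac13}(0,T)}^2+\|\varphi_x(L,\cdot)\|_{L^2(0,T)}^2\Big),
\eeq
the negative order on the first trace being dictated by the gain $h_1\in H^{\frac13}$. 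I would establish this by a multiplier identity producing the two observation terms up to a lower-order remainder $\|\varphi\|_{L^2((0,L)\times(0,T))}^2$, followed by the standard compactness-uniqueness argument to absorb that remainder.

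The decisive simplification from using two controls is visible in the unique continuation property that closes the compactness-uniqueness argument. It reduces to showing that the stationary problem $\lambda\varphi+\varphi_x+\varphi_{xxx}=0$ on $(0,L)$, with the adjoint boundary conditions and both observations annihilated, has only the trivial solution. But $\varphi(0)=0$ and $\varphi_x(0)=0$ are boundary conditions while the vanishing first observation gives $\varphi_{xx}(0)=0$; these are complete Cauchy data at $x=0$ for a third-order ODE, so $\varphi\equiv0$ by uniqueness, for \emph{every} $L>0$. This is precisely why no critical-length set $\FF$ or ${\cal N}$ intervenes, in sharp contrast with the single-input problems (\ref{kdv-1.1}) and (\ref{kdv-1.2}). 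The cost of this cleaner uniqueness is that $h_1$ is paired against the \emph{second-order} trace $\varphi_{xx}(0,\cdot)$, which I must control in $H^{-\frac13}(0,T)$; this is a sharp hidden-regularity statement two spatial derivatives beyond (\ref{e-3}), and establishing it — together with verifying the compactness of the lower-order remainder so that the duality pairing is legitimate at this low regularity — is the step I expect to demand the most care.

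Finally, for the nonlinear step I would freeze the bounded control operator produced by the linear theory and, for $w$ in a small ball of $C([0,T];L^2(0,L))\cap L^2(0,T;H^1(0,L))$, define $y=\Gamma(w)$ as the solution of the linear controlled problem with right-hand side $-ww_x$ whose controls are chosen so that $y|_{t=0}=y_0$ and $y|_{t=T}=y_T$. Using the Kato smoothing to bound $ww_x$ in the space where the linear theory operates and Theorem A for continuous dependence, $\Gamma$ maps the ball into itself and contracts it once $\delta$ is small; its fixed point is the desired solution, and the control regularities $h_1\in H^{\frac13}(0,T)$, $h_2\in L^2(0,T)$ are inherited from the linear control operator.
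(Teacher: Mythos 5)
Your proposal is correct and follows essentially the same route as the paper: there, Theorem \ref{theo-h-1-2} is obtained by combining Proposition \ref{two} (HUM/Lax--Milgram for the adjoint system \eqref{3-2}, with the controls realized as $h_1=\Delta_t^{-\frac13}\psi_{xx}(0,\cdot)\in H^{\frac13}(0,T)$ and $h_2=\psi_x(L,\cdot)\in L^2(0,T)$) with the observability estimate of Lemma \ref{ceA}, whose compactness--uniqueness proof closes exactly as you describe --- the limit solution satisfies $\psi(0,\cdot)=\psi_x(0,\cdot)=\psi_{xx}(0,\cdot)=0$, i.e.\ full Cauchy data at $x=0$, hence vanishes for every $L>0$, which is why no critical-length set appears --- while the nonlinear step is the same contraction-mapping argument as in Theorem \ref{theo-h-2} (the paper skips it for precisely that reason). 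The sharp trace estimate you flag as the delicate ingredient, $\sup_{0<x<L}\|\psi_{xx}(x,\cdot)\|_{H^{-\frac13}(0,T)}\le C\|\psi_T\|_{L^2(0,L)}$, is exactly the paper's Theorem \ref{adprop1} with $j=2$, so your plan is complete as stated.
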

For the system using both $h_2$ and $h_3$ as control inputs,
\be\label{kdv-1.4}
    \begin{cases}
y_t + y_x + y_{xxx} + y y_x =0,  (x, t) \in (0,L)\times (0,T),\\
y(0,t)=0, \, y_x(L,t)=h_2(t), \, y_{xx}(L, t)=h_3(t), \quad t\in(0,T) , \\
\end{cases} \ee \noi
we have the following local exact controllability result.

\begin{theo}\label{theo-h-2-3} Let $T>0$   and $L>0$ be given. There exists  $\delta >0$ such that
for any $y_0, \ y_T \in L^2 (0,L)$ with \[ \|y_0\|_{L^2
(0,L)}+\|y_T\|_{L^2 (0,L)} \leq \delta ,\] one can find $h_3\in
H^{-\frac13}(0,T) $ and $h_2\in L^2 (0,T)$ such that the system
(\ref{kdv-1.4}) admits a unique solution
\[ y\in C([0,T]; L^2 (0,L))\cap L^2 (0,T; H^1 (0,L))\]
satisfying
\[ y|_{t=0}= y_0, \qquad y|_{t=T} =y_T.\]
\end{theo}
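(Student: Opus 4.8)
The plan is to follow the same two-step scheme used for Theorems \ref{theo-h-2}--\ref{theo-h-1-2}: first establish that the linearization of (\ref{kdv-1.4}) about the origin is exactly controllable in $L^2(0,L)$ for \emph{every} $L>0$, and then pass to the nonlinear system by the contraction mapping principle. The linearized system is
\[
\begin{cases}
y_t+y_x+y_{xxx}=0,\quad y(x,0)=y_0(x),\quad (x,t)\in(0,L)\times(0,T),\\
y(0,t)=0,\ y_x(L,t)=h_2(t),\ y_{xx}(L,t)=h_3(t),\quad t\in(0,T),
\end{cases}
\]
and, by the Hilbert Uniqueness Method, its exact controllability in $L^2(0,L)$ is equivalent to an observability inequality for a suitable adjoint system.

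Integrating by parts in $\langle y_t+y_x+y_{xxx},\varphi\rangle$ over $(0,L)\times(0,T)$ and requiring the boundary terms attached to the uncontrolled traces $y(L,t),\,y_x(0,t),\,y_{xx}(0,t)$ to vanish forces $\varphi$ to solve precisely the adjoint system (\ref{aj-1}), namely $\varphi_t+\varphi_x+\varphi_{xxx}=0$ with $\varphi(0,t)=\varphi_x(0,t)=0$ and $\varphi(L,t)+\varphi_{xx}(L,t)=0$. The remaining boundary integral is $\int_0^T\big(h_3\,\varphi(L,t)-h_2\,\varphi_x(L,t)\big)\,dt$, so the control $h_3$ is dual to the trace $\varphi(L,\cdot)$ and the control $h_2$ is dual to $\varphi_x(L,\cdot)$. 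Hence the observability inequality I must prove is
\[
\|\varphi_T\|_{L^2(0,L)}^2\le C\Big(\|\varphi(L,\cdot)\|_{L^2(0,T)}^2+\|\varphi_x(L,\cdot)\|_{L^2(0,T)}^2\Big).
\]

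The crucial point — and the reason no critical length appears — is that this is exactly the estimate (\ref{e-2}) produced by the standard multiplier-plus-compactness argument, which was \emph{insufficient} in the single-control setting (\ref{kdv-11}) because only $\varphi_x(L,\cdot)$ was then observed and the extra term $\|\varphi(L,\cdot)\|_{L^2(0,T)}$ had to be removed via the hidden regularity (\ref{e-3}), a removal that fails precisely for $L\in\FF$. With \emph{both} controls present, both traces are observed, so (\ref{e-2}) is already the desired inequality. I would close the compactness--uniqueness step by noting that the associated unique continuation is trivial and $L$-independent: if both observed traces vanish, then $\varphi(L,t)=\varphi_x(L,t)=0$, and the adjoint boundary condition $\varphi(L,t)+\varphi_{xx}(L,t)=0$ forces $\varphi_{xx}(L,t)=0$ as well; thus $\varphi$ carries full Cauchy data at $x=L$ and must vanish identically. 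This yields exact controllability of the linearized system for all $L>0$, with $h_2\in L^2(0,T)$ and $h_3\in H^{-\frac13}(0,T)$ by the sharp trace (Kato smoothing) regularity behind Theorem A and (\ref{e-3}).

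For the nonlinear system I would write $yy_x=\tfrac12(y^2)_x$ as a source term and construct, from the linear controllability just obtained, a bounded control operator that drives $y_0$ to $y_T$ for the inhomogeneous linear equation $y_t+y_x+y_{xxx}=f$; composing it with $f=-\tfrac12(y^2)_x$ defines a map whose fixed point solves (\ref{kdv-1.4}) with the required endpoint conditions. Smallness of $\|y_0\|_{L^2(0,L)}+\|y_T\|_{L^2(0,L)}\le\delta$ then makes this map a contraction on a small ball of $C([0,T];L^2(0,L))\cap L^2(0,T;H^1(0,L))$. The main obstacle I expect is not the linear controllability, which as above is essentially automatic for two controls, but the nonlinear closure: establishing the bilinear estimate for $(y^2)_x$ in the sharp function-space scale and verifying that the control operator stays bounded when acting on this source, while keeping careful track of the low regularity $h_3\in H^{-\frac13}(0,T)$, so that the contraction estimate genuinely closes.
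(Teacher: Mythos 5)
Your proposal is correct and follows essentially the same route as the paper: linear exact controllability via an observability inequality for the adjoint system (\ref{aj-1}), proved by the multiplier identity (Proposition \ref{bdness}) plus compactness--uniqueness, where unique continuation closes because the two observed traces together with the boundary condition $\psi(L,t)+\psi_{xx}(L,t)=0$ yield full vanishing Cauchy data at $x=L$ (this is the paper's Lemma \ref{ceB} and Proposition \ref{two-2}), followed by the same contraction-mapping fixed point used for Theorem \ref{theo-h-2}, which the paper states applies verbatim to Theorem \ref{theo-h-2-3}. The only cosmetic difference is that you pair $h_3$ with $\varphi(L,\cdot)$ in $L^2(0,T)$, while the paper takes $h_3=\Delta_t^{1/3}\psi(L,\cdot)\in H^{-\frac13}(0,T)$; both choices make the HUM operator coercive via Lax--Milgram, and yours even places $h_3$ in $L^2(0,T)\subset H^{-\frac13}(0,T)$.
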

For the system using $h_1$ and $h_3$ as control inputs,
\be\label{kdv-1.5}
    \begin{cases}
y_t + y_x + y_{xxx} + y y_x =0,  (x, t) \in (0,L)\times (0,T),\\
y(0,t)=h_1(t), \, y_x(L,t)=0, \, y_{xx}(L, t)=h_3(t), \quad t\in(0,T)\\
\end{cases} \ee \noi
we have
\begin{theo}\label{theo-h-1-3} Let $T>0$  and $L>0$ be given. There exists  $\delta >0$ such that
for any $y_0, \ y_T \in L^2 (0,L)$ with \[ \|y_0\|_{L^2
(0,L)}+\|y_T\|_{L^2 (0,L)} \leq \delta ,\] one can find $h_1\in
H^{\frac13}(0,T) $ and $h_3\in H^{-\frac13}(0,T)$ such that the
system (\ref{kdv-1.5})   admits a unique solution
\[ y\in C([0,T]; L^2 (0,L))\cap L^2 (0,T; H^1 (0,L))\]
satisfying
\[ y|_{t=0}= y_0, \qquad y|_{t=T} =y_T.\]
\end{theo}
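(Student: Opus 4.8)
The plan is to follow the two-step strategy used throughout this paper: first prove exact controllability of the system linearized around the origin for every $L>0$, and then transfer the result to the nonlinear system (\ref{kdv-1.5}) by the contraction mapping principle. The well-posedness of Theorem A (with $s=0$, which forces exactly $h_1\in H^{\frac13}(0,T)$ and $h_3\in H^{-\frac13}(0,T)$) together with the sharp Kato smoothing of the type (\ref{e-3}) supplies the estimates needed to close the fixed-point argument, so the essential content is the linear problem
\beq
\begin{cases}
y_t+y_x+y_{xxx}=0,\quad y(x,0)=y_0(x),\\
y(0,t)=h_1(t),\; y_x(L,t)=0,\; y_{xx}(L,t)=h_3(t).
\end{cases}
\eeq
By the Hilbert Uniqueness Method, exact controllability of this system is equivalent to an observability inequality for its adjoint.

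Integrating by parts in $x$ and cancelling the unknown traces of $y$ against those of the dual variable $\va$ leads to the adjoint boundary conditions $\va(0,t)=0$, $\va_x(0,t)=0$, $\va(L,t)+\va_{xx}(L,t)=0$ --- the same adjoint operator as in (\ref{aj-1}) --- with the inputs $h_1$ and $h_3$ pairing respectively against the observed traces $\va_{xx}(0,\cdot)$ and $\va(L,\cdot)$. Accordingly, controllability with the stated control regularity is equivalent to
\beq
\|\va_T\|_{L^2(0,L)}\le C\Big(\|\va_{xx}(0,\cdot)\|_{H^{-\frac13}(0,T)}+\|\va(L,\cdot)\|_{H^{\frac13}(0,T)}\Big)
\eeq
for all solutions of the backward adjoint equation with terminal datum $\va_T$. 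The two traces are controlled in precisely these Sobolev norms by the hidden regularity (\ref{e-3}) and its companion bound for the second-order trace, which moreover makes the trace maps compact relative to $\|\va_T\|_{L^2(0,L)}$.

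To establish the observability inequality I would first run the standard multiplier method (using the multipliers $\va$ and $x\va$) together with the Kato smoothing to obtain its weak form, in which the right-hand side carries an additional compact lower-order term such as $C\|\va_T\|_{H^{-1}(0,L)}$, and then remove that term by a compactness-uniqueness argument. As usual, this reduces to the uniqueness statement that the only terminal datum for which both observed traces vanish identically on $(0,T)$ is $\va_T=0$; passing to the finite-dimensional subspace of such data produced by the argument, one is reduced to asking whether there is a nonzero eigenfunction $\psi$ of the adjoint spatial operator, $\psi_{xxx}+\psi_x=\l\psi$, satisfying the adjoint boundary conditions together with the extra conditions coming from $\va_{xx}(0,\cdot)\equiv0$ and $\va(L,\cdot)\equiv0$, i.e. $\psi_{xx}(0)=0$ and $\psi(L)=0$.

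This last step --- exactly the point at which the critical sets $\FF$ and ${\cal N}$ emerge when a single input is used --- becomes trivial here, and this is the whole reason two inputs remove the critical length phenomenon. Indeed, the adjoint conditions already give $\psi(0)=\psi_x(0)=0$, and the vanishing of the left-endpoint observation $\va_{xx}(0,\cdot)$ alone then forces $\psi_{xx}(0)=0$, so that $\psi$ solves a third-order linear ODE with all three Cauchy data vanishing at $x=0$; by uniqueness for the initial value problem, $\psi\equiv0$ for every $\l\in\CC$ and every $L>0$ (the second observation $\va(L,\cdot)$ is not even needed). Hence the observability inequality holds for all $L>0$, the linearized system is exactly controllable, and the contraction mapping argument delivers Theorem \ref{theo-h-1-3} with $h_1\in H^{\frac13}(0,T)$ and $h_3\in H^{-\frac13}(0,T)$. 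The genuine obstacle is thus not the uniqueness step but the functional-analytic bookkeeping: proving the weak observability with the traces measured in the sharp $H^{\pm\frac13}$ norms and verifying compactness of the remainder, after which the transfer to (\ref{kdv-1.5}) is routine.
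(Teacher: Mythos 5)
Your reduction is set up correctly: the adjoint system is indeed (\ref{3-2}), the duality pairings $h_1\leftrightarrow\psi_{xx}(0,\cdot)$ and $h_3\leftrightarrow\psi(L,\cdot)$ are the right ones, the target observability inequality is stated in the correct dual norms, and the uniqueness step is, as you say, trivial for every $L>0$ (it is the same unique continuation argument as in Lemma \ref{ceA}). The genuine gap is in the step you dismiss as ``functional-analytic bookkeeping''. The multiplier identity for (\ref{3-2}) is (\ref{z-1}),
\[
\|\psi_T\|_{L^2(0,L)}^2 \le \frac{1}{T}\|\psi\|_{L^2((0,T)\times(0,L))}^2
+\|\psi_x(L,\cdot)\|_{L^2(0,T)}^2+\|\psi(L,\cdot)\|_{L^2(0,T)}^2 ,
\]
and in the $(h_1,h_3)$ configuration the term $\|\psi_x(L,\cdot)\|_{L^2(0,T)}^2$ is neither an observation (there is no control $h_2$ to pair with it) nor compact relative to $\|\psi_T\|_{L^2(0,L)}$: the sharp trace regularity of Theorem \ref{adprop1} gives, for $j=1$, exactly $\|\psi_x(L,\cdot)\|_{L^2(0,T)}\le C\|\psi_T\|_{L^2(0,L)}$ with zero Sobolev gain, so a bounded sequence $\psi^n_x(L,\cdot)$ need not have any strongly convergent subsequence in $L^2(0,T)$. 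Consequently, if the two observations tend to zero you cannot conclude that $\{\psi_T^n\}$ is Cauchy, and the compactness--uniqueness scheme never reaches the uniqueness step at all. This is precisely the asymmetry the paper emphasizes around (\ref{e-2})--(\ref{e-3}): the hidden regularity rescues the zeroth-order trace $\psi(L,\cdot)$ (bounded in $H^{\frac13}(0,T)$, hence compact in $L^2(0,T)$), which is why Lemmas \ref{case3}, \ref{ceA}, \ref{ceB} and Proposition \ref{two-2-2} all close --- in each of them the first-order trace $\psi_x(L,\cdot)$ (resp.\ $\nu_x(L,\cdot)$) is among the \emph{observed} quantities because $h_2$ (resp.\ $g_3$) is among the controls --- but it gives nothing for the first-order trace, which is exactly the one left over here. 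Your own wave-packet picture confirms this: an exiting packet has $\|\psi_x(L,\cdot)\|_{L^2}\sim\|\psi(L,\cdot)\|_{H^{\frac13}}\sim\|\psi_T\|_{L^2}$, so there is no compactness to exploit.

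It is worth noting that the paper never proves an observability estimate for the pair $\bigl(\psi_{xx}(0,\cdot),\psi(L,\cdot)\bigr)$; Theorem \ref{theo-h-1-3} is the one two-control case that is \emph{not} amenable to its Lemma \ref{ceA}-type argument, and it is not listed among the theorems said to follow from the proof of Theorem \ref{theo-h-2}. The route consistent with the paper (mirroring the proof of Theorem \ref{theo-h-3}) is through the correspondence between the two families of boundary conditions: a controlled trajectory $u$ of (\ref{kdv2--3-1}) (controls $g_1,g_2$, with $g_3=0$, so $u_x(L,\cdot)=0$) is exactly a controlled trajectory of (\ref{kdv-1.5}) with $h_1=g_1\in H^{\frac13}(0,T)$ and $h_3=u_{xx}(L,\cdot)$, which lies in $H^{-\frac13}(0,T)$ by the hidden regularity of Proposition \ref{regular}. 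Thus Theorem \ref{theo-h-1-3} rests on exact controllability with two Dirichlet controls (Theorem E of Glass--Guerrero \cite{08glass-guerrero}, upgraded as in Theorem \ref{ex-3}), whose proof uses Carleman estimates rather than compactness--uniqueness. To repair your argument one would need an additional, genuinely nontrivial trace estimate of the form $\|\psi_x(L,\cdot)\|_{L^2(0,T)}\le C\bigl(\|\psi(L,\cdot)\|_{H^{\frac13}(0,T)}+\|\psi_{xx}(0,\cdot)\|_{H^{-\frac13}(0,T)}\bigr)+\text{compact terms}$, which is plausible at the level of outgoing-wave symbols but is proved neither in your proposal nor anywhere in the paper.
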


If all three boundary control inputs are allowed to be used, then we
can show that system (\ref{kdv}) is locally exactly
controllable around any smooth solution of the KdV equation.
\begin{theo}\label{theo-h-1-2-3} Let $T>0$ and $L>0$ be given. Assume  that
    $u \in C^{\infty}(\R, H^{\infty} (\R))$ satisfies
    $$u_t+u_x+uu_x+u_{xxx}=0, \quad (x,t)\in  \R\times \R.$$
    Then there exists  $\delta >0$  such that for any $y_0, y_T \in L^2(0,L),$ satisfying
    $$\| y_0 - u(\cdot,0)\|_{L^2 (0,L) }+ \| y_T - u(\cdot,T)\|_{L^2(0,L)}
    \le \delta $$
    one can find control inputs $$h_1\in H^{\frac13} (0,T), \quad h_2\in L^2 (0,T),\quad h_3\in
     H^{-\frac13}(0,T)$$
    such that \eqref{kdv} admits a unique solution
    \[ y\in C([0,T]; L^2 (0,L))\cap L^2 (0,T; H^1 (0,L))\]
satisfying
\[ y|_{t=0}= y_0, \qquad y|_{t=T} =y_T.\]
\end{theo}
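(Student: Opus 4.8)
The plan is to establish local exact controllability around the fixed reference solution $u$ in three stages: reduce to a control problem near the origin, solve the corresponding linearized control problem for every $L>0$ by duality, and close the argument by a contraction mapping. First I would set $z=y-u$. Using $u_t+u_x+u_{xxx}+uu_x=0$ and the identity $yy_x-uu_x=(uz)_x+zz_x$, one sees that $z$ solves the perturbed equation
$$z_t+z_x+z_{xxx}+(uz)_x+zz_x=0,\qquad (x,t)\in(0,L)\times(0,T),$$
with boundary data $z(0,t)=h_1-u(0,t)$, $z_x(L,t)=h_2-u_x(L,t)$, $z_{xx}(L,t)=h_3-u_{xx}(L,t)$. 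Since $u\in C^\infty(\R,H^\infty(\R))$ is fixed, its boundary traces are smooth and known, so prescribing $(h_1,h_2,h_3)$ is the same as prescribing the three boundary data of $z$; the theorem therefore reduces to steering $z$ from the small datum $z_0=y_0-u(\cdot,0)$ to the small target $z_T=y_T-u(\cdot,T)$ by the three boundary controls, near $0$ in $L^2(0,L)$.

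The associated linearized system is $w_t+w_x+w_{xxx}+(uw)_x=0$ with the three boundary controls, and I would prove its exact controllability in $L^2(0,L)$ for every $L>0$ by the classical duality argument, which reduces it to an observability inequality for the adjoint. This adjoint carries the same homogeneous boundary conditions as \eqref{aj-1} together with the extra potential term coming from $(uw)_x$, and its three observations, dual respectively to $h_1,h_2,h_3$, are $\varphi_{xx}(0,\cdot)$, $\varphi_x(L,\cdot)$ and $\varphi(L,\cdot)$; matching the control regularities $H^{\frac13},L^2,H^{-\frac13}$, the inequality to be proved is
$$\|\varphi_T\|_{L^2(0,L)}\le C\left(\|\varphi_{xx}(0,\cdot)\|_{H^{-\frac13}(0,T)}+\|\varphi_x(L,\cdot)\|_{L^2(0,T)}+\|\varphi(L,\cdot)\|_{H^{\frac13}(0,T)}\right).$$
The multiplier method (multiplying by $\varphi$ and by $x\varphi$) yields this estimate modulo lower-order terms. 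The decisive simplification relative to the one-control situation is that the term $\|\varphi(L,\cdot)\|$, which was the genuine obstruction there (compare the extra term in \eqref{e-2}), is now a legitimate observation on the right-hand side; the sharp Kato smoothing estimate \eqref{e-3} guarantees that all three observations are finite in their sharp trace spaces and supplies the compactness needed in the final step.

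The crucial point, and the reason the critical-length phenomenon disappears, is that the unique continuation problem arising in the compactness–uniqueness argument is now overdetermined. A nontrivial limit would have to satisfy the stationary spectral problem together with $\varphi(0)=\varphi_x(0)=0$, $\varphi(L)+\varphi_{xx}(L)=0$ and the vanishing of all three observations $\varphi(L)=\varphi_x(L)=\varphi_{xx}(0)=0$; these conditions force $\varphi\equiv0$ for every $L>0$, in sharp contrast with the single-control cases, where only one observation is available and the analogous problem admits nontrivial solutions exactly for the critical lengths in $\FF$ or $\mathcal N$. I expect this observability step to be the main obstacle: one must match the fractional regularities of the three controls with the corresponding adjoint traces and carry the hidden-regularity estimate \eqref{e-3} through in the presence of the smooth potential $u$, so that the compactness–uniqueness argument and its overdetermined unique continuation apply.

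With the bounded control operator provided by the linear theory, the nonlinear problem is then solved by the contraction mapping principle. I would define a map sending a given state $\tilde z$ to the solution of the linear controlled system with source $-\tilde z\,\tilde z_x$ steered from $z_0$ to $z_T$, and show, using the well-posedness and smoothing estimates of Theorem~A together with the bilinear estimate for $\partial_x(\tilde z^2)$ in the space $C([0,T];L^2(0,L))\cap L^2(0,T;H^1(0,L))$, that this map contracts a small ball once $\delta$ is small. Its fixed point is the desired controlled $z$, and then $y=z+u$, with the $h_j$ read off from the boundary traces, solves the original problem and enjoys the claimed regularity.
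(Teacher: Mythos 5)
Your proposal takes a fundamentally different route from the paper, and it contains a genuine gap at its central step. Once you linearize around the reference trajectory $u$, the linear system $w_t+w_x+w_{xxx}+(uw)_x=0$ is \emph{nonautonomous}: the coefficient $u(x,t)$ depends on time. Your uniqueness step --- ``a nontrivial limit would have to satisfy the stationary spectral problem'' --- is therefore invalid. The reduction of compactness--uniqueness to a spectral problem (as in Lemma \ref{next}, following Rosier) works only for time-translation-invariant systems: one shows the finite-dimensional space of unobservable data is invariant under the (autonomous) generator and hence contains an eigenfunction. With a time-dependent potential there is no generator and no such invariance, so what your argument actually requires is a genuine unique continuation theorem for the nonautonomous equation: a solution of $\varphi_t+\varphi_x+\varphi_{xxx}+u\varphi_x=0$ with vanishing Cauchy data $\varphi=\varphi_x=\varphi_{xx}=0$ on the lateral boundary must vanish identically. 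Since $u$ is smooth but not analytic, Holmgren does not apply; one needs Carleman-type estimates (\`a la Saut--Scheurer). In addition, every piece of linear machinery you invoke --- well-posedness in $X_T$, the hidden regularity \eqref{e-3}, the multiplier identities, the duality operator --- is established in the paper only for constant-coefficient systems (Propositions \ref{allsys}, \ref{traces}, \ref{regular}, \ref{propauxakdv} and Theorem \ref{adprop1}); each would have to be re-proved with the potential term. Note also that you cannot dodge this by treating $(uz)_x$ perturbatively inside the contraction: it is linear in $z$ but its coefficient $u$ is not small, so it cannot be absorbed by shrinking $\delta$.

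The paper's actual proof avoids the linearized problem entirely. It applies Theorem G (Zhang's exact controllability of KdV posed on the whole line $\R$, where the \emph{initial datum} is the control) with $w=u$ and $s=0$: this produces a solution $z$ of \eqref{line} on $\R\times\R$ satisfying $z(\cdot,0)=y_0$ and $z(\cdot,T)=y_T$ on $(0,L)$. The controlled solution is then simply the restriction $y=z|_{(0,L)\times(0,T)}$, and the controls are read off as boundary traces, $h_1=z(0,\cdot)$, $h_2=z_x(L,\cdot)$, $h_3=z_{xx}(L,\cdot)$, which lie in $H^{\frac13}(0,T)\times L^2(0,T)\times H^{-\frac13}(0,T)$ by the sharp Kato smoothing (hidden regularity) properties. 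No observability inequality, duality argument, or fixed point is needed. This route also explains the hypothesis, unused in your scheme, that $u$ is a global smooth solution of KdV on all of $\R\times\R$: it is exactly what Theorem G requires.
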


\medskip
Finally, with the  help of some hidden regularity properties for solutions of
the KdV equation we can improve some controllability results
for the system (\ref{kdv2}). Concerning
 system \be\label{kdv2-2-1}\left\{\ba{l}
u_t + u_x + u_{xxx} + u u_x =0,  \ (x,t)\in (0,L)\times (0,T), \\
u(0,t)=0, \, u(L,t)=g_2(t), \, u_{x}(L,t)=0, \quad t\in(0,T)
 \ea \right.\ee we can show that the control input $g_2$, used in Theorem C,
   belongs in fact to the space $H^{\frac13}(0,T)$.

\begin{theo}\label{ex-1} Let $T>0$ and $L\notin {\cal N}$ be given. There exists
$r>0$ such that for any $u_0, \ u_T \in L^2 (0,L)$ with \[ \|u_0
\|_{L^2 (0,L)}+\|u_T \|_{L^2 (0,L)} \leq r,\] there exists
$g_2\in H^{\frac13} (0,T) $ such that the system (\ref{kdv2-2-1})
admits a solution
\[ u\in C([0,T]; L^2 (0,L))\cap L^2 (0,T; H^1 (0,L))\]
satisfying
\[ u|_{t=0}= u_0, \qquad u|_{t=T} =u_T.\]
\end{theo}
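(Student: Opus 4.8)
The plan is to follow the by-now standard route for these KdV boundary control problems — linear exact controllability via an observability inequality for the adjoint, followed by the \emph{contraction mapping principle} to absorb the nonlinearity — but to measure the boundary observation in the sharp norm dictated by the hidden regularity, which is precisely what upgrades the control from $H^{\frac16-\epsilon}(0,T)$ to $H^{\frac13}(0,T)$.

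First I would linearize \eqref{kdv2-2-1} around the origin, obtaining $u_t+u_x+u_{xxx}=0$ with $u(0,t)=0$, $u(L,t)=g_2(t)$, $u_x(L,t)=0$. Integrating the equation against a test function and integrating by parts shows that the control $g_2$ is dual to the trace $\varphi_{xx}(L,\cdot)$ of the adjoint system $\varphi_t+\varphi_x+\varphi_{xxx}=0$, $\varphi(0,t)=\varphi_x(0,t)=\varphi(L,t)=0$, through the identity $\langle u(\cdot,T),\varphi_T\rangle_{L^2}=-\int_0^T g_2(t)\,\varphi_{xx}(L,t)\,dt$. The first key step is the \emph{direct} inequality: by the sharp Kato smoothing property (the analogue of \eqref{e-3} for the present adjoint), for $\varphi_T\in L^2(0,L)$ the observation satisfies $\varphi_{xx}(L,\cdot)\in H^{-\frac13}(0,T)$ with $\|\varphi_{xx}(L,\cdot)\|_{H^{-\frac13}(0,T)}\le C\|\varphi_T\|_{L^2(0,L)}$. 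This is exactly the gain that Glass and Guerrero did not exploit in Theorem C, and it is what identifies $H^{-\frac13}(0,T)$, rather than a weaker space, as the correct home for the observation.

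The main step is then the matching \emph{observability} inequality $\|\varphi_T\|_{L^2(0,L)}\le C\,\|\varphi_{xx}(L,\cdot)\|_{H^{-\frac13}(0,T)}$, valid precisely when $L\notin{\cal N}$. I would establish this by the multiplier method to obtain a preliminary estimate modulo lower-order terms, and then remove the lower-order terms by a compactness--uniqueness argument; the resulting stationary (spectral) problem reduces to the eigenvalue condition appearing in \eqref{n-1}, so the excluded lengths are exactly ${\cal N}$, in agreement with Theorem C. With the direct and observability inequalities in hand, the Hilbert Uniqueness Method, set up with the $H^{-\frac13}(0,T)$ pairing of observations, produces a bounded linear control operator steering any $u_0\in L^2(0,L)$ to any $u_T\in L^2(0,L)$ for the linear system, with $g_2\in H^{\frac13}(0,T)$ and $\|g_2\|_{H^{\frac13}(0,T)}\le C(\|u_0\|_{L^2}+\|u_T\|_{L^2})$; here $H^{\frac13}$ arises as the dual of the observation space $H^{-\frac13}$, which is also the natural trace space for a Dirichlet-type boundary control of an $L^2$ solution.

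Finally I would recover the nonlinear statement by a fixed-point argument. Writing the nonlinearity $-uu_x$ as a forcing term, I would consider the map sending $w$ to the linear controlled solution with source $-ww_x$ that still reaches the prescribed target; using the well-posedness of Theorem A together with the bilinear bound $\|ww_x\|_{L^1(0,T;L^2)}\lesssim \|w\|_{C([0,T];L^2)}\|w\|_{L^2(0,T;H^1)}$, this map is a contraction on a small ball of $C([0,T];L^2(0,L))\cap L^2(0,T;H^1(0,L))$ once $\delta$ is small, and its fixed point is the desired solution, with control $g_2\in H^{\frac13}(0,T)$. I expect the principal obstacle to be the observability inequality in the sharp $H^{-\frac13}$ norm: both the compactness--uniqueness reduction and the verification that the obstruction set is exactly ${\cal N}$ are delicate, and one must check that the hidden regularity is strong enough to justify the $H^{-\frac13}$ duality throughout the HUM construction rather than merely in the final bound.
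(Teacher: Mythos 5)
Your overall architecture (sharp observability for the adjoint, HUM in the $H^{-\frac13}$/$H^{\frac13}$ duality, then a contraction argument) is not what the paper does, and it contains a genuine gap at exactly the step you flag as the ``principal obstacle'': the proof of the observability inequality $\|\varphi_T\|_{L^2(0,L)}\le C\|\varphi_{xx}(L,\cdot)\|_{H^{-\frac13}(0,T)}$ by ``multiplier method plus compactness--uniqueness'' does not close. For the adjoint with boundary conditions $\varphi(0,t)=\varphi_x(0,t)=\varphi(L,t)=0$, the multiplier $t\varphi$ (or $(T-t)\varphi$) yields an estimate of the form $\|\varphi_T\|_{L^2(0,L)}^2\le \frac1T\|\varphi\|^2_{L^2(0,T;L^2(0,L))}+\|\varphi_x(L,\cdot)\|^2_{L^2(0,T)}$: the boundary trace produced by the multiplier is $\varphi_x(L,\cdot)$, \emph{not} the observation $\varphi_{xx}(L,\cdot)$. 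In the compactness--uniqueness scheme the interior term is compact (Aubin--Lions), but the term $\|\varphi_x(L,\cdot)\|^2_{L^2(0,T)}$ is neither dominated by $\|\varphi_{xx}(L,\cdot)\|_{H^{-\frac13}(0,T)}$ nor removable by compactness: the sharp Kato smoothing (Theorem \ref{adprop1} and Proposition \ref{regular}) gives for the first-derivative trace exactly $H^{\frac{1-1}{3}}(0,T)=L^2(0,T)$, with no gain, so a bounded sequence $\varphi^n_x(L,\cdot)$ has no strongly convergent subsequence in $L^2(0,T)$ and one cannot conclude that $\{\varphi_T^n\}$ is Cauchy. This is precisely the mismatch the paper emphasizes in \eqref{e-2}--\eqref{e-3} and \eqref{z-1}--\eqref{z-2}; for the systems the paper treats by compactness--uniqueness (Lemma \ref{case3}, Lemma \ref{ceA}) the extra trace is the zeroth-order one, which gains $H^{\frac13}(0,T)$ regularity and hence compactness into $L^2(0,T)$ --- for the Glass--Guerrero adjoint that rescue is unavailable, which is why their Theorem C is hard and is not proved this way.

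The paper's actual proof of Theorem \ref{ex-1} (stated there as Proposition \ref{6:1}) avoids proving any new observability inequality at all. It takes the control $g_2\in H^{\frac16-\epsilon}(0,T)$ and the nonlinear solution $u\in X_T$ furnished by Theorem C as a black box, and upgrades the regularity of that same $g_2$ a posteriori: write $u=\kappa+\mu$, where $\kappa$ solves the problem with homogeneous boundary data and forcing $f=-u_x-uu_x\in L^1(0,T;L^2(0,L))$ (so $\kappa\in X_T$ by Proposition \ref{regular}), hence $\mu=u-\kappa\in X_T$ solves \eqref{iff}; then Lemma \ref{iff-1}, proved by an explicit oscillatory-integral representation of the Dirichlet boundary-to-solution map, states that $\mu\in X_T$ \emph{if and only if} $g_2\in H^{\frac13}(0,T)$, which yields the conclusion at once, with no HUM construction and no fixed point beyond the one already inside Theorem C. If you want to keep your HUM route, you must replace the multiplier/compactness step by a genuinely different proof of the sharp observability inequality (e.g.\ the Fourier--Laplace/spectral analysis of Glass--Guerrero), since the argument you describe fails at the $\varphi_x(L,\cdot)$ term.
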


\medskip
Regarding system \be\label{kdv2--2-1}\left\{\ba{l}
u_t + u_x + u_{xxx} + u u_x =0,  \ (x,t)\in (0,L)\times (0,T), \\
u(0,t)=0, \, u(L,t)=g_2(t), \, u_{x}(L,t)=g_3(t),\quad t\in(0,T),
 \ea \right.\ee
the Theorem D can be improved as follows.

\medskip
\begin{theo}\label{ex-2}
Let $T>0 $ and $L>0$ be given. There exists $\delta >0$ such that for any
$u_0, \ u_T \in L^2 (0,L)$ with \[ \|u_0 \|_{L^2 (0,L)}+\|u_T
\|_{L^2 (0,L)} \leq \delta ,\] there exist  $g_2 \in H^{\frac13}
(0,T)$ and $ g_3\in L^2 (0,T) $ such that system (\ref{kdv2--2-1})
admits a solution
\[ u\in C([0,T]; L^2 (0,L))\cap L^2 (0,T;H^1(0,L))\]
satisfying
\[ u|_{t=0}= u_0, \qquad u|_{t=T} =u_T.\]
\end{theo}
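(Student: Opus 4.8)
The plan is to follow the same two–step program already used for the two–control theorems above (in particular Theorem \ref{theo-h-2-3}), only now for the boundary–condition set of \eqref{kdv2--2-1} rather than that of \eqref{kdv}. First I would prove that the linear system obtained by dropping $uu_x$ in \eqref{kdv2--2-1} is exactly controllable in $L^2(0,L)$ \emph{for every} $L>0$, with the controls already in the target spaces $g_2\in H^{\frac13}(0,T)$ and $g_3\in L^2(0,T)$; then I would recover the nonlinear statement by a contraction mapping argument in which $uu_x$ is treated as a source term and the smallness of $\delta$ yields the contraction. Since Theorem D already gives exact controllability for all $L$, the genuine content is to produce the \emph{sharp} control regularity, and this is exactly where the hidden regularity \eqref{e-3} enters.

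For the linear step, the adjoint of $u_t+u_x+u_{xxx}=0$, $u(0,t)=0,\ u(L,t)=g_2,\ u_x(L,t)=g_3$, run backward from $t=T$, is
\[ \varphi_t+\varphi_x+\varphi_{xxx}=0,\quad \varphi(0,t)=0,\ \varphi_x(0,t)=0,\ \varphi(L,t)=0 . \]
Integration by parts shows that the controls $(g_2,g_3)$ pair against the boundary observation $(\varphi_{xx}(L,\cdot),\ \varphi_x(L,\cdot))$, so by the Hilbert Uniqueness Method carried out in the dual spaces the sought controllability with $g_2\in H^{\frac13}$, $g_3\in L^2$ is equivalent to the observability inequality
\[ \|\varphi_T\|_{L^2(0,L)}\le C\Big(\|\varphi_{xx}(L,\cdot)\|_{H^{-\frac13}(0,T)}+\|\varphi_x(L,\cdot)\|_{L^2(0,T)}\Big). \]
Two ingredients make this work. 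First, the observation must actually belong to $H^{-\frac13}\times L^2$: this is the sharp Kato smoothing \eqref{e-3} together with its companion traces, which at the $L^2$ level yield $\varphi(L,\cdot)\in H^{\frac13}$, $\varphi_x(L,\cdot)\in L^2$ and $\varphi_{xx}(L,\cdot)\in H^{-\frac13}$, consistently with the trace scaling of Theorem A; the duality map $H^{-\frac13}\to H^{\frac13}$ then delivers the control $g_2$ precisely in $H^{\frac13}$. Second, as already seen in \eqref{e-2}, a direct multiplier computation only gives the estimate up to a lower–order boundary term, which must be absorbed by the usual compactness–uniqueness argument; here this reduces to the unique continuation property, and because two boundary observations are present (so that $\varphi,\varphi_x,\varphi_{xx}$ all vanish at $x=L$), the relevant kernel is trivial for \emph{every} $L>0$ and no critical–length set appears.

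For the nonlinear step I would write $uu_x=\tfrac12(u^2)_x$ and regard it as a forcing term, using that for $u\in C([0,T];L^2(0,L))\cap L^2(0,T;H^1(0,L))$ one has $(u^2)_x\in L^2(0,T;H^{-1}(0,L))$. Invoking Theorem A for well–posedness with source and boundary data, together with the bounded linear control operator built above, I would define a map $\Gamma$ sending $w$ to the solution $u$ of the linearized controlled problem with source $-\tfrac12(w^2)_x$ that steers $y_0$ to $y_T$; a fixed point of $\Gamma$ solves \eqref{kdv2--2-1} with $u|_{t=0}=u_0$, $u|_{t=T}=u_T$, and the controls it generates inherit the regularity $g_2\in H^{\frac13}(0,T)$, $g_3\in L^2(0,T)$. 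For $\delta$ small enough $\Gamma$ maps a small ball into itself and is a contraction there, giving the local result.

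The step I expect to be the main obstacle is the linear observability estimate in the $H^{-\frac13}$ norm: one must simultaneously justify the sharp boundary trace $\varphi_{xx}(L,\cdot)\in H^{-\frac13}(0,T)$ (so that the right–hand side is finite and the $g_2$–control lands in $H^{\frac13}$) and remove the residual lower–order term through compactness, which rests on unique continuation. Once the sharp traces and the unique continuation property are in hand, the nonlinear contraction is comparatively routine, the only delicate point being to check that the source–to–control estimates close at the sharp level $H^{\frac13}\times L^2$ rather than at a lossier regularity.
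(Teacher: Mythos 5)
Your proposal is correct and is essentially the argument the paper intends: although the paper never writes out a proof of Theorem \ref{ex-2}, its template for all the two-control results --- HUM for the linearized system in the sharp dual pairing ($g_2\in H^{\frac13}$, $g_3\in L^2$ against $\varphi_{xx}(L,\cdot)\in H^{-\frac13}$, $\varphi_x(L,\cdot)\in L^2$), observability proved by the hidden (sharp Kato) trace regularity plus a compactness--uniqueness argument in which unique continuation applies because $\varphi$, $\varphi_x$, $\varphi_{xx}$ all vanish at $x=L$ (hence no critical-length set), followed by a contraction-mapping treatment of $uu_x$ --- is exactly what Propositions \ref{two}, \ref{two-2}, \ref{two-2-2}, Lemmas \ref{ceA}, \ref{ceB}, the estimate \eqref{z-2} and the proof of Theorem \ref{theo-h-2} carry out for the analogous boundary configurations, and your plan reproduces it for the configuration of \eqref{kdv2--2-1}. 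The only slip is harmless but should be fixed: in the nonlinear step the source $uu_x$ must be placed in $L^1(0,T;L^2(0,L))$ via the bilinear estimate $\int_0^T\|uu_x\|_{L^2(0,L)}\,dt\le C\|u\|_{X_T}^2$ (as in the proof of Theorem \ref{theo-h-2}), not in $L^2(0,T;H^{-1}(0,L))$, since the linear theory the paper provides (Proposition \ref{regular}) only accommodates forcing that is $L^1$ in time with values in $L^2(0,L)$.
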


\medskip
Moreover, we also consider  system (\ref{kdv2}) using only two
control inputs $g_1$ and $g_2$ \be\label{kdv2--3-1}\left\{\ba{l}
u_t + u_x + u_{xxx} + u u_x =0,  \ (x,t)\in (0,L)\times (0,T), \\
u(0,t)=g_1 (t), \, u(L,t)=g_2(t), \, u_{x}(L,t)=0, \quad t\in(0,T)
 \ea \right.\ee
which has not been studied in the literature before.  We can show
that  the critical length phenomenon will not occur for this
system neither.

\medskip
\begin{theo}\label{ex-3}
Let $T>0 $ and $L>0$ be given. There exists  $\delta >0$ such that for any
$u_0, \ u_T \in L^2 (0,L)$ with \[ \|u_0 \|_{L^2 (0,L)}+\|u_T
\|_{L^2 (0,L)} \leq \delta ,\] one can find $g_1 \in H^{\frac13}
(0,T)$ and  $g_2\in H^{\frac13}(0,T) $ such that the system
(\ref{kdv2--3-1}) admits a solution
\[ u\in C([0,T]; L^2 (0,L))\cap L^2 (0,T;H^1(0,L))\]
satisfying
\[ u|_{t=0}= u_0, \qquad u|_{t=T} =u_T.\]
\end{theo}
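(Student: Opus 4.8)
The plan is to follow the standard two-step scheme: first prove exact controllability of the linearization
\[ u_t+u_x+u_{xxx}=0,\quad u(0,t)=g_1(t),\ u(L,t)=g_2(t),\ u_x(L,t)=0,\quad u(x,0)=u_0, \]
and then recover the nonlinear statement near the origin by a contraction mapping argument. For the linear step I would pass to the adjoint system. Integrating $\int_0^T\int_0^L (u_t+u_x+u_{xxx})\varphi\,dx\,dt$ by parts and demanding that every \emph{uncontrolled} boundary trace of $u$ (namely $u_x(0,\cdot)$, $u_{xx}(0,\cdot)$ and $u_{xx}(L,\cdot)$) drop out forces the adjoint boundary conditions
\[ \varphi(0,t)=\varphi_x(0,t)=\varphi(L,t)=0, \]
for $\varphi_t+\varphi_x+\varphi_{xxx}=0$ with terminal data $\varphi(\cdot,T)=\varphi_T$, and it identifies the control--observation duality $g_1\leftrightarrow\varphi_{xx}(0,\cdot)$ and $g_2\leftrightarrow\varphi_{xx}(L,\cdot)$. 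Exact controllability in $L^2(0,L)$ with $g_1,g_2\in H^{\frac13}(0,T)$ is then equivalent to the observability inequality
\[ \|\varphi(\cdot,0)\|_{L^2(0,L)}\le C\Big(\|\varphi_{xx}(0,\cdot)\|_{H^{-\frac13}(0,T)}+\|\varphi_{xx}(L,\cdot)\|_{H^{-\frac13}(0,T)}\Big). \]

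To establish this inequality I would combine the usual multiplier identities (testing the adjoint equation against $\varphi$ and against $x\varphi$, together with the energy identity for $t\mapsto\|\varphi(\cdot,t)\|_{L^2(0,L)}$) with the sharp Kato smoothing property. As the authors already observe in $(\ref{e-2})$, the multiplier machinery by itself yields only a \emph{weak} observability estimate in which the two trace norms are supplemented by a lower-order term; it is precisely the hidden regularity $(\ref{e-3})$ that makes the whole scheme close at the low regularity dictated by $H^{\frac13}$ controls, both to give meaning to and bound the traces $\varphi_{xx}(0,\cdot),\varphi_{xx}(L,\cdot)$ in $H^{-\frac13}(0,T)$ (the direct inequality) and to control the boundary contributions appearing in the multiplier identities.

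The remaining lower-order term is removed by a compactness--uniqueness argument, and this is exactly where the presence of \emph{two} controls eliminates the critical-length phenomenon. The argument reduces the problem to showing that the finite-dimensional space of adjoint states with both observations vanishing is trivial; by invariance under the generator it suffices to rule out a nonzero eigenfunction, i.e. a solution of
\[ \psi'''+\psi'+\lambda\psi=0,\qquad \psi(0)=\psi'(0)=\psi(L)=0,\quad \psi''(0)=\psi''(L)=0. \]
Here the three adjoint conditions at $x=0$ together with the vanishing of the observation $\psi''(0)=0$ give $\psi(0)=\psi'(0)=\psi''(0)=0$, so uniqueness for the third-order linear ODE forces $\psi\equiv0$ \emph{for every} $L>0$. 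Thus no transcendental condition on $L$ survives, in sharp contrast to the single-control systems $(\ref{kdv2-1})$ and $(\ref{kdv2-2})$.

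Finally, with linear exact controllability in hand, the nonlinear result follows by writing $uu_x=\tfrac12(u^2)_x$ as a forcing term and setting up a map $u\mapsto$ (solution driven by the control that steers $u_0$ to $u_T$ against this forcing); using the well-posedness and smoothing of Theorem A together with the bilinear $L^2(0,T;H^1(0,L))$ estimates, one checks that this map is a contraction on a small ball once $\|u_0\|_{L^2(0,L)}+\|u_T\|_{L^2(0,L)}\le\delta$, which simultaneously produces the controls $g_1,g_2\in H^{\frac13}(0,T)$. I expect the genuinely hard part to be the low-regularity observability estimate of the second paragraph---reconciling the multiplier computations and the trace estimates at the $H^{\pm\frac13}$ level via the sharp Kato smoothing---whereas the spectral/uniqueness step that defeats the critical-length obstruction is, for this two-control configuration, essentially immediate.
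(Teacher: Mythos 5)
Your adjoint bookkeeping is correct (the adjoint conditions are indeed $\varphi(0,\cdot)=\varphi_x(0,\cdot)=\varphi(L,\cdot)=0$, with observations $\varphi_{xx}(0,\cdot)$ and $\varphi_{xx}(L,\cdot)$ dual to $g_1$ and $g_2$), the unique-continuation/eigenfunction step is, as you say, immediate for this configuration, and your final contraction argument is the same as the paper's proof of Theorem \ref{theo-h-2}. The problems are in the linear observability step, and they are two. First, the inequality you declare equivalent to exact controllability bounds $\|\varphi(\cdot,0)\|_{L^2(0,L)}$, the value at time $0$ of the backward adjoint solution; that inequality characterizes \emph{null} controllability. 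Exact controllability requires bounding the terminal datum, $\|\varphi_T\|_{L^2(0,L)}\le C(\cdots)$, exactly as in Lemmas \ref{case3}, \ref{ceA}, \ref{ceB}. Since the KdV semigroup on $(0,L)$ is smoothing and not invertible, the two notions are genuinely different --- the paper itself recalls (Theorem F) that with a single left-end control the system is null but \emph{not} exactly controllable --- so this is not a harmless notational slip.

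Second, and fatally, the correct inequality is not reachable by the multiplier-plus-compactness--uniqueness scheme you propose. For your adjoint boundary conditions the energy identity reads
\[
\|\varphi_T\|_{L^2(0,L)}^2=\|\varphi(\cdot,0)\|_{L^2(0,L)}^2+\|\varphi_x(L,\cdot)\|_{L^2(0,T)}^2 ,
\]
so any bound on $\|\varphi_T\|_{L^2(0,L)}$ by the observations must in particular bound the trace $\varphi_x(L,\cdot)$ in $L^2(0,T)$ by them. In your configuration this trace is \emph{not observed} ($g_3\equiv 0$, so nothing is paired with $\varphi_x(L,\cdot)$), and it is \emph{not compact} either: the sharp Kato smoothing (Theorem \ref{adprop1} and its analogues) gives for first-order traces of $L^2$ solutions only $\sup_{0<x<L}\|\varphi_x(x,\cdot)\|_{L^2(0,T)}\le C\|\varphi_T\|_{L^2(0,L)}$, with no gain of derivative, whereas the paper's compactness trick applies precisely to the \emph{zeroth}-order extra trace, which gains $1/3$ of a derivative and uses the compact embedding $H^{\frac13}(0,T)\hookrightarrow L^2(0,T)$ (this is how the extra term in (\ref{e-2}) and in (\ref{z-1}) is removed). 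Consequently, in your contradiction argument a sequence with $\|\varphi^n_T\|_{L^2(0,L)}=1$ and vanishing observations can send all of its energy out through the unobserved trace $\varphi^n_x(L,\cdot)$ (concentrating near $t=T$), and nothing in your toolbox excludes this. It is exactly for this reason that every observability inequality actually proved in the paper (Lemmas \ref{case3}, \ref{ceA}, \ref{ceB} and Proposition \ref{two-2-2}) concerns a configuration whose observations \emph{include} the Neumann-type trace $\psi_x(L,\cdot)$ (resp.\ $\nu_x(L,\cdot)$), while the cases missing it --- the single control $h_3$, the single control $g_2$, and your Dirichlet--Dirichlet pair $(g_1,g_2)$ --- are treated indirectly: the paper starts from an already established controllability theorem of Glass--Guerrero and upgrades the regularity of control and trajectory via hidden regularity and Lemma \ref{iff-1} (this is how Proposition \ref{6:1}, i.e.\ Theorem \ref{ex-1} and hence Theorem \ref{theo-h-3}, is proved), or it reads the desired controls off the boundary traces of a solution already controlled within the other family of boundary conditions (this is how Theorem \ref{ex-2} follows from Theorem \ref{theo-h-2-3}). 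Theorem \ref{ex-3} has to be obtained by such an indirect route (an upgrade of Glass--Guerrero's Theorem E, in the spirit of Proposition \ref{6:1}), not from scratch by multipliers, Kato smoothing and compactness; your proposal is missing the idea that actually closes the estimate.
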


The paper is organized as follows.
 In Section \ref{sec-est}, we present  various linear estimates including
hidden regularities for solutions of the linear systems associated
to (\ref{kdv}) and (\ref{kdv2}) which will play important roles in
establishing our exact controllability results in this paper. The
associated linear systems are shown to be exactly controllable in
Section \ref{sec-lin} while the nonlinear systems are shown to be locally
exactly controllable using the standard \emph{contraction mapping
principle} in Section \ref{sec-non}.   Finally, in Section \ref{conclusion} we
provide some conclusion remarks together with some  open problems
for further studies. The paper is ended with an appendix where
the proofs of some technical lemmas used in the paper are
furnished.

%%%%%%%%%%%%%%%%%%%%%%%%%%%%%%%%%%%%%%%%%%%%%%%%%%%%%%%%%%%%%%%%%%%%%%%%%%%%%

\section{Linear estimates}\label{sec-est}
\setcounter{equation}{0}
\subsection{The forward linear system}

\quad

In this subsection, we consider  the following linear problem
associated to the nonlinear system (\ref{kdv})
 \be\label{Gkdv}
\begin{cases}
y_t + y_x + y_{xxx} =f, \quad  \ y(x,0)= y_0 (x), \quad  x \in (0,L),\  t>0,\\
y(0,t)=h_1(t), \, y_x(L,t)=h_2(t), \,
y_{xx}(L,t)=h_3(t),\quad t>0.\end{cases} \ee In the case $h_1=h_2=h_3=0$
and $f=0$, the solution $y$ can be written as
\[ y= W_0 (t) y_0. \]
Here  $W_0(t)$ is  the $C_0$-semigroup in the space $L^2(0,L)$ (see \cite{Pazy})
generated by the linear operator
$$A\psi=-\psi '''- \psi'$$
whose domain is
$$\D(A)=\{ \psi \in H^3(0,L): \psi (0)=\psi '(L)=\psi ''(L)=0  \}.$$
The solution $y$ of \eqref{Gkdv}, when $h_1=h_2=h_3=0$ and $y_0=0$, is given by  \[ y=\int ^t_0 W_0 (t-\tau ) f(\tau ) d\tau \]
while  in the case $y_0=0$ and $f=0$,  it has the form
\[ y= W_{bdr} (t) \vec{h}\]
where $\vec{h} = (h_1, h_2 , h_3)$ and $W_{bdr}(t)$ is the
associated  boundary integral operator defined in
\cite{kz,10RiKZ}.

\medskip
 As it has been demonstrated in \cite{kz,10RiKZ} the linear system (\ref{Gkdv}) is  well-posed in the space $H^s (0,L)$
 for any $0\leq s\leq 3$  with
 \[ y_0\in H^s (0,L), \ f\in W^{\frac{s}{3},1}(0,T;L^2(0,L))\] and
 \[ \vec{h}=(h_1, h_2 , h_3)\in {\cal H}^s_{loc} (\R^+) :=
  H^{\frac{s+1}{3}}_{loc} (\R^+)\times H^{\frac{s}{3}}_{loc}(\R^+)\times H^{\frac{s-1}{3}}_{loc} (\R^+).\]
 In particular, in the case $s=0$,  the result can be stated as
 follows.

 \begin{prop}\label{allsys}
 Let $T>0$ be given,  for any $y_0\in L^2(0,L)$, $f\in
 L^1(0,T;L^2(0,L))$ and
 $$(h_1,h_2,h_3)\in \H _T:=H^{\frac13}(0,T)\times L^2(0,T)\times H^{-\frac13}(0,T),$$
 the IBVP \eqref{Gkdv}  admits a unique solution
 $$y\in X_T:=C([0,T];L^2(0,L))\cap L^2(0,T;H^1(0,L)).$$
Moreover,  there exists $C>0$ such that
 $$\|y\|_{X_T} \le  C\left(\|f\|_{L^1(0,T;L^2(0,L))} + \|y_0\|_{L^2(0,L)} + \|(h_1,h_2,h_3)\|_{\H _T} \right).$$
 \end{prop}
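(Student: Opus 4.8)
The plan is to exploit the linearity of \eqref{Gkdv} and split the solution as $y = y_1 + y_2 + y_3$, where $y_1 = W_0(t)y_0$ carries the initial data, $y_2 = \int_0^t W_0(t-\tau)f(\tau)\,d\tau$ carries the forcing, and $y_3 = W_{bdr}(t)\vec{h}$ carries the boundary data, with the other two inputs set to zero in each case. Since $X_T$ and $\H_T$ are Banach spaces and the three inputs live in the separate factors $L^2(0,L)$, $L^1(0,T;L^2(0,L))$ and $\H_T$, it suffices to establish the three separate bounds $\|y_1\|_{X_T}\le C\|y_0\|_{L^2(0,L)}$, $\|y_2\|_{X_T}\le C\|f\|_{L^1(0,T;L^2(0,L))}$ and $\|y_3\|_{X_T}\le C\|\vec h\|_{\H_T}$; adding them yields the asserted estimate.

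For $y_1$ I would first show that $A$ generates a $C_0$-semigroup of contractions on $L^2(0,L)$ via the Lumer--Phillips theorem. The dissipativity of $A$ is a direct integration by parts: using the boundary conditions $\psi(0)=\psi'(L)=\psi''(L)=0$ encoded in $\D(A)$ one finds
\[
(A\psi,\psi)_{L^2(0,L)} = -\tfrac12\,|\psi'(0)|^2 - \tfrac12\,|\psi(L)|^2 \le 0,
\]
and a parallel computation shows that the adjoint operator (with the adjoint boundary conditions) is dissipative as well, so $A$ is $m$-dissipative and $\|W_0(t)y_0\|_{L^2(0,L)}\le\|y_0\|_{L^2(0,L)}$; this gives the $C([0,T];L^2(0,L))$ part. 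The $L^2(0,T;H^1(0,L))$ part is the Kato smoothing effect, obtained by the multiplier method: multiplying $y_{1,t}+y_{1,x}+y_{1,xxx}=0$ by $xy_1$ and integrating over $(0,L)\times(0,t)$ produces, after integration by parts, a term $\tfrac32\int_0^t\!\!\int_0^L y_{1,x}^2\,dx\,d\tau$ with the remaining boundary contributions controlled by the energy, which yields $\|y_1\|_{L^2(0,T;H^1(0,L))}\le C\|y_0\|_{L^2(0,L)}$. The forcing part $y_2$ is handled the same way: the contraction property gives $\|y_2(t)\|_{L^2(0,L)}\le\int_0^t\|f(\tau)\|_{L^2(0,L)}\,d\tau$ directly from the Duhamel formula, and the same $xy_2$ multiplier, with the extra term $\int_0^L x y_2 f\,dx$ controlled by Cauchy--Schwarz together with the $L^2$ bound just obtained, delivers the $H^1$-smoothing estimate in terms of $\|f\|_{L^1(0,T;L^2(0,L))}$.

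The genuinely hard part is the boundary term $y_3 = W_{bdr}(t)\vec h$, and for this I would invoke the analysis of the boundary integral operator carried out in \cite{kz,10RiKZ}. The strategy there is to pass to the Laplace transform in $t$ (equivalently, to solve the resolvent ODE $\lambda\hat y+\hat y_x+\hat y_{xxx}=0$ on $(0,L)$ with the transformed boundary data), to express $\hat y$ through the three characteristic roots of $\lambda+i\xi+(i\xi)^3$, and then to recover $y_3$ as an explicit oscillatory/contour integral. The precise Sobolev exponents $\tfrac{s+1}{3},\tfrac s3,\tfrac{s-1}{3}$ appearing in $\H^s$ --- here $\tfrac13,0,-\tfrac13$ for $s=0$ --- are dictated by the cubic dispersion relation, and the boundedness $\|y_3\|_{X_T}\le C\|\vec h\|_{\H_T}$, including the $H^1$ gain, follows from the sharp multiplier and trace estimates established in those references. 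This is the step that requires the full harmonic-analytic machinery rather than elementary energy identities, and where the scaling must be matched exactly; I expect it to be the main obstacle.

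Finally, the three estimates combine to give the stated bound and exhibit a solution in $X_T$. Uniqueness follows from the energy identity applied to the difference $w$ of two solutions: since $w$ solves \eqref{Gkdv} with $y_0=0$, $f=0$ and $\vec h=0$, multiplying by $w$ and using the dissipativity computation gives $\tfrac{d}{dt}\|w(t)\|_{L^2(0,L)}^2\le 0$ with $w(0)=0$, forcing $w\equiv 0$.
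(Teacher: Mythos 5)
Your proposal is correct and takes essentially the same route as the paper: the paper gives no self-contained proof of Proposition \ref{allsys} but imports it from \cite{kz,10RiKZ}, and the proof it does sketch for the analogous Proposition \ref{propauxakdv} in the Appendix follows exactly your decomposition --- semigroup and Duhamel pieces handled by contraction/energy and the $xy$-multiplier (Kato smoothing) estimates, and the boundary piece $W_{bdr}(t)\vec h$ handled by the Laplace-transform/oscillatory-integral analysis of the characteristic determinants. Your dissipativity identity $(A\psi,\psi)=-\tfrac12|\psi'(0)|^2-\tfrac12|\psi(L)|^2$ and the adjoint computation are correct, and deferring the sharp bound on the boundary integral operator to \cite{kz,10RiKZ} is precisely what the paper itself does.
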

In addition, the solution $y$ of (\ref{Gkdv}) possesses the
 following hidden (or sharp trace) regularities.
 \begin{prop}\label{traces} Let $T>0$ be given.  For any $y_0\in L^2(0,L)$, $f\in
 L^1(0,T;L^2(0,L))$ and
 $(h_1,h_2,h_3)\in \H _T,$
 the solution $y$ of the system (\ref{Gkdv}) satisfies
 \be \label{h-1}\sup _{0<x<L} \| \partial _x^{j} y (x,
 \cdot)\|_{H^{\frac{1-j}{3}} (0,T)} \leq C_j \left(\|f\|_{L^1(0,T;L^2(0,L))} + \|y_0\|_{L^2(0,L)} + \|(h_1,h_2,h_3)\|_{\H _T}
 \right)\ee
 for $j=0,1,2$.
 \end{prop}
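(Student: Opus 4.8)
The plan is to exploit the linearity of \eqref{Gkdv} and reduce \eqref{h-1} to the sharp time-regularity of the spatial traces for three elementary solution operators, then to read off the gain of $1/3$ derivative in time from the cubic dispersion relation of the KdV symbol.

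First I would decompose the solution as
$$y = W_0(t)y_0 + \int_0^t W_0(t-\tau)f(\tau)\,d\tau + W_{bdr}(t)\vec h,$$
so that, by the triangle inequality, it suffices to prove \eqref{h-1} separately for (i) the pure initial-data solution $y^{(1)}=W_0(t)y_0$ with $f=0$, $\vec h=0$; (ii) the forced solution $y^{(2)}$ with $y_0=0$, $\vec h=0$; and (iii) the pure boundary solution $y^{(3)}=W_{bdr}(t)\vec h$ with $y_0=0$, $f=0$. The forcing case (ii) I would immediately reduce to (i): writing $\partial_x^j y^{(2)}(x,\cdot)=\int_0^T \partial_x^j\big(W_0(\cdot-\tau)f(\tau)\big)(x)\,d\tau$ and applying Minkowski's integral inequality in the Banach space $H^{\frac{1-j}{3}}(0,T)$ together with the time-translation invariance of $W_0$, the $L^1(0,T;L^2(0,L))$ norm of $f$ appears on the right, so no new ideas beyond (i) are needed.

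For (i) and (iii) I would use the explicit representation of the solution obtained via the Laplace transform in $t$, as developed in \cite{kz,10RiKZ}. Taking the transform turns the equation into the third-order ODE with characteristic equation $\lambda^3+\lambda+s=0$ in $x$, whose three roots $\lambda_1(s),\lambda_2(s),\lambda_3(s)$ govern the spatial profiles $e^{\lambda_k(s)x}$; the initial datum and the boundary data enter through coefficients determined by a $3\times3$ linear system. Inverting along the contour $s=i\tau$ gives $\partial_x^j y(x,t)$ as a contour integral in $\tau$. The gain of regularity then comes from the large-$|\tau|$ asymptotics $\lambda_k(i\tau)\sim c_k|\tau|^{1/3}$: performing the change of variables between the time frequency $\tau$ and the spatial frequency $\xi$ (governed by $\tau=\xi-\xi^3$, so $|\tau|\sim|\xi|^3$ and $d\tau\sim|\xi|^2\,d\xi$) converts the weight $(1+|\tau|)^{\frac{1-j}{3}}$ into $(1+|\xi|)^{1-j}$ and produces a Jacobian $|\xi|^{-2}$, which together with the factor $|\xi|^{j}$ coming from $\partial_x^j$ reproduces exactly the $L^2$ norm of the data for each $j=0,1,2$. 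Crucially, because $\mathrm{Re}\,\lambda_k(i\tau)$ has a definite sign for the relevant roots, the exponentials $e^{\lambda_k(i\tau)x}$ remain uniformly bounded for $x\in(0,L)$, which is what yields the supremum over $x$ on the left-hand side of \eqref{h-1}.

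The main obstacle will be the uniform-in-$x$ control coupled with the low-frequency regime. For large $|\tau|$ one must verify that the determinant of the $3\times 3$ boundary system stays bounded away from zero after the natural rescaling, so that the coefficients do not amplify the data; this requires the precise root asymptotics and a careful check that no resonance occurs, and is the technical heart of the argument for the boundary piece (iii). For small $|\tau|$, near the discrete spectrum of $A$, the $1/3$-smoothing mechanism is absent and the change of variables degenerates, since the Jacobian $|\xi|^{-2}$ is singular at $\xi=0$; there I would instead bound the low-frequency part of the traces directly, using the already established well-posedness estimate of Proposition \ref{allsys} in $X_T$ together with the fact that on the bounded time interval $(0,T)$ this component is smooth in $t$, so its $H^{\frac{1-j}{3}}(0,T)$ norm is controlled by a coarser norm. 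Finally, for $j=2$ the target index $H^{-\frac13}(0,T)$ is negative, so the corresponding trace estimate is most cleanly established by duality against $H^{\frac13}(0,T)$ test functions, pairing with the representation above.
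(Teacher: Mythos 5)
Your proposal is essentially correct, and for the boundary piece $W_{bdr}(t)\vec h$ it coincides with what the paper does (in the Appendix proof of the analogous Proposition \ref{propauxakdv}, the in-text statement being delegated to \cite{zhang-1}): Laplace transform in $t$, Cramer's rule for the $3\times 3$ boundary system, large-$\rho$ asymptotics of the ratios $\Delta^+_{j,m}/\Delta^+$, inversion along $s=i\rho^3$, and Plancherel with the change of variables $\mu=\rho^3$ producing the $1/3$-gain. Where you genuinely diverge is in the treatment of the initial-data and Duhamel pieces. The paper does \emph{not} run the contour analysis on these terms: it extends $y_0$ and $f$ by zero to $\RR$, invokes the already-known sharp Kato smoothing traces for the Cauchy problem on the whole line (\cite{10RiKZ}), and then corrects the mismatched boundary values by subtracting a term of the form $W_{bdr}(t)\vec p$, which is estimated by the boundary lemma just proved. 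You instead propose (i) a direct Laplace-transform analysis of $W_0(t)y_0$ and (ii) a Minkowski-inequality reduction of the Duhamel term to (i). Point (ii) is fine, with the one caveat that the truncation $\ind_{t>\tau}$ forces you to use boundedness of extension-by-zero on $H^{s}(0,T)$, which holds here only because $|s|\le\frac13<\frac12$; this should be said explicitly. Point (i) is where your sketch underestimates the work: after the Laplace transform the initial datum appears as a \emph{forcing} of the third-order ODE in $x$, so the solution is a variation-of-parameters/Green's function expression, not just a combination of $e^{\lambda_k(s)x}$ with coefficients from a $3\times3$ system, and the resulting double (in $x$-source and $t$-frequency) oscillatory integrals are considerably more delicate; the whole-line reduction in the paper exists precisely to avoid this. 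Also, your assertion that boundedness of $e^{\lambda_k(i\tau)x}$ on $(0,L)$ follows from a sign of $\mathrm{Re}\,\lambda_k$ is not quite right: one of the roots has positive real part, and boundedness of the corresponding terms comes from the compensating exponential decay of the determinant ratios (visible in the paper's asymptotics table), not from the root alone. With these two repairs your route closes, and it has the merit of being self-contained (no appeal to whole-line smoothing results), at the price of a heavier explicit-representation analysis for the initial-data term.
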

 \noindent
 Next proposition states similar hidden (or sharp trace) regularity results for the linear system
 \be\label{2.2}
\begin{cases}
u_t + u_x + u_{xxx} =f, \quad  \ u(x,0)= u_0 (x), \quad  x \in (0,L),\  t>0,\\
u(0,t)=g_1(t), \, u(L,t)=g_2(t), \, u_{x}(L,t)=g_3(t),\quad t>0,\end{cases}
\ee  associated to  (\ref{kdv2}).
\begin{prop}\label{regular}
Let $T>0$ be given,  for any $u_0\in L^2(0,L)$, $f\in
 L^1(0,T;L^2(0,L))$ and
 $$(g_1,g_2,g_3)\in {\cal G} _T:=H^{\frac13}(0,T)\times H^{\frac13}(0, T)\times L^2(0,T),$$
 the IBVP \eqref{2.2}  admits a unique solution
 $u\in X_T.$ Moreover,  there exists $C>0$ such that
 $$\|u\|_{X_T} \le  C\left(\|f\|_{L^1(0,T;L^2(0,L))} + \|y_0\|_{L^2(0,L)} + \|(g_1,g_2,g_3)\|_{{\cal G} _T} \right).$$
 In addition, the solution $u$ possesses the following sharp trace
 estimates
 \be \label{h-2} \sup _{0<x<L} \| \partial _x^{j} u (x,
 \cdot)\|_{H^{\frac{1-j}{3}} (0,T)} \leq C_j \left(\|f\|_{L^1(0,T;L^2(0,L))} + \|u_0\|_{L^2(0,L)} + \|(g_1,g_2,g_3)\|_{{\cal G}_T}
 \right)\ee
 for $j=0,1,2$.

 \end{prop}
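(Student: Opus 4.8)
The plan is to establish both the well-posedness in $X_T$ and the sharp trace bounds \eqref{h-2} for \eqref{2.2} by following the same route used for \eqref{Gkdv} in Propositions~\ref{allsys} and \ref{traces}, the only structural difference being the boundary operators. First I would invoke linearity to split $u = u_1 + u_2 + u_3$, where $u_1$ solves \eqref{2.2} with $f=0$ and $g_1=g_2=g_3=0$ (pure initial data $u_0$), $u_2$ solves it with $u_0=0$, $\vec g=0$ and source $f$, and $u_3$ solves it with $u_0=0$, $f=0$ and boundary data $\vec g=(g_1,g_2,g_3)$. The $X_T$ estimate and the trace bounds \eqref{h-2} are then proved separately for each piece and summed.

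For $u_1$ and $u_2$ I would introduce the operator $B\psi = -\psi''' - \psi'$ with domain $\{\psi\in H^3(0,L): \psi(0)=\psi(L)=\psi'(L)=0\}$, the analogue of $A$ adapted to the boundary conditions of \eqref{2.2}. A short integration by parts gives $\mathrm{Re}\,\langle B\psi,\psi\rangle = -\tfrac12|\psi'(0)|^2 \le 0$, and the same computation for the formal adjoint $B^*\psi=\psi'''+\psi'$ shows that both $B$ and $B^*$ are dissipative; by Lumer--Phillips, $B$ generates a $C_0$-semigroup $W_B(t)$ of contractions on $L^2(0,L)$. Standard energy estimates then give $u_1=W_B(t)u_0\in X_T$ and, via the sharp Kato smoothing property \cite{Kato83}, the trace bounds \eqref{h-2} for $u_1$; the source term $u_2=\int_0^t W_B(t-\tau)f(\tau)\,d\tau$ is handled by Duhamel's formula together with the same smoothing estimate.

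The main work is the boundary-forcing part $u_3$, where I would avoid any lifting of the data (which destroys sharpness) and instead keep the explicit boundary integral operator $W_{bdr}(t)\vec g$. Taking the Laplace transform in $t$ reduces the homogeneous equation to the ODE $s\hat u + \hat u_x + \hat u_{xxx}=0$, whose solution space is spanned by $e^{\lambda_j(s)x}$ with $\lambda_j(s)$ the three roots of $\lambda^3+\lambda+s=0$. Imposing $\hat u(0,s)=\hat g_1$, $\hat u(L,s)=\hat g_2$, $\hat u_x(L,s)=\hat g_3$ determines the coefficients by Cramer's rule with a characteristic determinant $\Delta(s)$, and inverting the transform yields $u_3$. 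The estimates then reduce to multiplier bounds on this representation: after moving the contour to the imaginary axis one must show that the resulting Fourier multipliers send $\mathcal{G}_T$ into $X_T$ and control $\sup_{0<x<L}\|\partial_x^j u_3\|_{H^{(1-j)/3}(0,T)}$, which rests on the asymptotic analysis of the roots $\lambda_j(s)$ and on a lower bound $|\Delta(s)|\ge c>0$ along the inversion contour for large $|s|$, carried out separately in low- and high-frequency regimes.

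The hardest step is this last multiplier analysis for $u_3$, and in particular obtaining the sharp trace estimate \eqref{h-2} rather than merely the energy estimate: one must track the oscillation of $e^{\lambda_j(s)x}$ uniformly in $x\in(0,L)$ and match the precise regularities $H^{1/3}\times H^{1/3}\times L^2$ of $\vec g$ — dictated by the $x$-orders $0,0,1$ of the three boundary operators under the KdV scaling — against the $H^{(1-j)/3}$ output norms. This computation is identical in spirit to the one carried out for \eqref{Gkdv} in \cite{kz,10RiKZ}; the only change is that the rows of the boundary matrix, hence $\Delta(s)$ and its cofactors, now encode the traces $u(0)$, $u(L)$, $u_x(L)$ rather than $y(0)$, $y_x(L)$, $y_{xx}(L)$. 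Since the required non-vanishing and growth of $\Delta(s)$ hold for every $L>0$ (the critical-length obstruction enters only in the observability inequality, not in well-posedness), the estimates go through for all $L>0$, giving both the $X_T$ bound and \eqref{h-2}.
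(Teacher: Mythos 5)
Your decomposition $u=u_1+u_2+u_3$ and your treatment of the boundary-forcing piece $u_3$ (Laplace transform in $t$, Cramer's rule with the characteristic determinant $\Delta(s)$, inversion along the imaginary axis, asymptotics of the roots and of the cofactor ratios, then Plancherel) is exactly the route the paper takes: the paper defers Proposition \ref{regular} to \cite{zhang-1}, but its template is the appendix proof of Proposition \ref{propauxakdv}, whose Lemma \ref{e-1} is precisely the multiplier analysis you describe. For that part your plan is sound, including the remark that the critical-length phenomenon plays no role in well-posedness.

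The gap is in your treatment of $u_1=W_B(t)u_0$ and $u_2=\int_0^t W_B(t-\tau)f(\tau)\,d\tau$. You assert that the trace bounds \eqref{h-2} for these pieces follow from ``standard energy estimates'' together with ``the sharp Kato smoothing property \cite{Kato83}.'' But Kato's sharp smoothing is a statement about the Cauchy problem on the whole line $\R$; it says nothing directly about the semigroup $W_B(t)$ on $(0,L)$, and a solution of the bounded-domain problem with homogeneous boundary conditions is \emph{not} the restriction of any whole-line solution, so the smoothing estimate cannot simply be invoked for $u_1$. Energy and multiplier arguments on $(0,L)$ do give $u_1\in X_T$ and $L^2$-in-time control of boundary traces, but they cannot produce the uniform-in-$x$ fractional time regularity $\partial_x^j u_1(x,\cdot)\in H^{(1-j)/3}(0,T)$ --- in particular not the case $j=2$, which is a negative-order statement and is exactly the ``hidden'' regularity at issue. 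The paper's mechanism (end of the proof of Proposition \ref{propauxakdv}) supplies the missing idea: extend $u_0$ (resp.\ $f$) by zero to $\R$, let $\mu=W_R(t)\tilde u_0$ (resp.\ $\tilde z$) be the whole-line solution, apply the whole-line sharp smoothing of \cite{10RiKZ} to $\mu$, and then write $u_1 = W_R(t)\tilde u_0 - W_{bdr}(t)\vec p$ on $(0,L)$, where $\vec p$ collects the boundary traces of $\mu$ at $x=0$ and $x=L$ (which lie in ${\cal G}_T$ by the whole-line smoothing); the correction $W_{bdr}(t)\vec p$ is then estimated by the same boundary-operator bounds you establish for $u_3$. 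Two consequences: first, the sharp traces of $u_1,u_2$ are not independent of the $u_3$ analysis but a corollary of it, so the logical order of your argument must be reversed (boundary estimates first); second, as written, your derivation of \eqref{h-2} for $u_1$ and $u_2$ would fail, and this is the one place where your proposal does not reduce to a complete proof.
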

 The proofs of Proposition \ref{traces} and Proposition
 \ref{regular} can be found in \cite{zhang-1} (cf. also \cite{03bona-sun-zhang,bsz-finite,10RiKZ}).

 \begin{rem}
 Systems (\ref{Gkdv}) and (\ref{2.2}) are equivalent
 in the following sense: for given $\{ y_0, f, h_1,h_2,h_3\} $ one
 can find $\{ u_0, f, g_1,g_2,g_3\} $ such that the corresponding
 solution $y$ of (\ref{Gkdv}) is exactly the same as the corresponding $u$ for  system
 (\ref{2.2}) and vice versa. Indeed, for given $y_0\in L^2 (0,L)$, $f\in L^1 (0,T; L^2 (0,L))$
 and $\vec{h}\in {\cal H}_T$, system (\ref{Gkdv}) admits a unique
 solution $y\in X_T$. Let
 $ u_0=y_0$, and set
 \[ g_1(t)=h_1 (t), \quad g_2 (t) = y(L,t), \quad g_3 (t)=
 h_2(t).\]
 Then, according to (\ref{h-1}), we have $\vec{g} \in {\cal G}_T$.
 Because of the uniqueness of the IBVP (\ref{2.2}),
 with such selected $(u_0, f, g_1,g_2, g_3)$, the corresponding solution $u\in X_T$ of (\ref{2.2}) must be equal
 to $y$ since $y$ also solves (\ref{2.2}) with the given auxiliary data $(u_0, f, g_1,g_2,
 g_3)$. On the other hand, for any given $u_0\in L^2 (0,L)$, $f\in
 L^1 (0,T; L^2 (0,L))$ and $\vec{g}\in {\cal G}_T$, let $u\in X_T$
 be the corresponding solution of the system (\ref{2.2}).  By
 (\ref{h-2}), we have $u_{xx} (L, \cdot )\in H^{-\frac13} (0,T)$.
 Thus, if set $y_0=u_0$ and
 \[  h_1(t)=g_1(t), \quad h_2(t)=g_3 (t), \quad h_3 (t)=u_{xx}
 (L,t),\]
 then $\vec{h}\in {\cal H}_T$ and the corresponding solution  $y\in X_T$ of
 (\ref{Gkdv}) must be equal to $u$ which also solves (\ref{Gkdv})
 with the auxiliary data $(y_0, f, \vec{h})$.
 \end{rem}

 \subsection{The backward adjoint linear system}

 \quad
 In this subsection, we consider the  backward adjoint  system  of
 (\ref{Gkdv})
\be\label{10-1} \bc
\psi_t + \psi_x +  \psi_{xxx}  =0, \quad  \psi(x,T)= \psi_T(x), \quad (x, t)\in (0,L)\times (0,T),\\
\psi (0,t)=0, \quad  \psi_x(0,t)=0, \quad   \psi
(L,t)+\psi_{xx}(L,t)=0,\quad t\in (0,T) , \ec\ee  which (by
transformation $x'=L-x,\ t'=T-t$) is equivalent to the following
forward system \be\label{2.3} \bc
\varphi_t + \varphi_x +  \varphi_{xxx}  =0, \quad \varphi(x,0)= \varphi_0(x),\quad (x, t)\in (0,L)\times (0,T),\\
\varphi(L,t)=0, \quad  \varphi_x(L,t)=0, \quad
\varphi(0,t)+\varphi_{xx}(0,t)=0, \quad t\in (0,T).
 \ec\ee  The solution  of (\ref{2.3}) can
be written as
\[ \va  (x,t) =S(t) \va_0 \]
where $S(t)$ is the $C_0$ semigroup in the space $L^2 (0,L)$
generated by the operator
\[ A_1 f= -f'-f'''\]
with the domain
\[ {\cal D} (A_1)=\{ f\in H^3 (0,L):\ f(0)+f''(0)=0, \
f(L)=f'(L)=0\} .\]

\begin{prop}\label{fistpart}
For any $\va_0\in L^2(0,L)$ the IBVP \eqref{2.3} admits a unique
solution $\va \in  X_T$. Moreover,  there exists $C>0$ such that
\[ \| \va\|_{X_T} \leq C \|\va_0\|_{L^2(0,L)}  \]
 and \[
\int^T_0 \left ( |\va (0,t)|^2 + |\va _x (0,t)|^2 \right ) dt \leq C
\| \va _0\|^2_{L^2 (0,L)} .\]

\end{prop}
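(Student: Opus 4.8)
The plan is to realize the solution operator $S(t)=e^{tA_1}$ through the Lumer--Phillips theorem, and then to extract all three assertions (existence/uniqueness, the $X_T$-bound, and the boundary trace estimate) from two multiplier identities, established first for data $\varphi_0\in\mathcal{D}(A_1)$ and then for general $\varphi_0\in L^2(0,L)$ by density.

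First I would check that $A_1$ generates a $C_0$ semigroup of contractions. Density of $\mathcal{D}(A_1)$ is clear. For dissipativity, integrating by parts twice and feeding in the boundary conditions $f(L)=f'(L)=0$ and $f(0)+f''(0)=0$ gives, for $f\in\mathcal{D}(A_1)$,
\[ \mathrm{Re}\,(A_1 f,f)_{L^2(0,L)} = -\mathrm{Re}\int_0^L (f'+f''')\bar f\,dx = -\tfrac12\big(|f(0)|^2+|f'(0)|^2\big)\le 0 . \]
For the range condition I would either (i) compute the adjoint, finding $A_1^* g=g'+g'''$ on $\mathcal{D}(A_1^*)=\{g\in H^3(0,L):g(L)=g'(0)=g''(0)=0\}$, and verify the analogous identity $\mathrm{Re}\,(A_1^* g,g)=-\tfrac12(|g(0)|^2+|g'(L)|^2)\le 0$, so that $A_1$ and $A_1^*$ are both dissipative; or (ii) solve the boundary value problem $f+f'+f'''=h$ directly using the characteristic roots of $\lambda^3+\lambda+1=0$, uniqueness being automatic since $\mathrm{Re}\,((I-A_1)f,f)\ge\|f\|^2$. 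Either route gives, by Lumer--Phillips, that $S(t)$ is a contraction semigroup, so that $\varphi=S(t)\varphi_0\in C([0,T];L^2(0,L))$ is the unique solution with $\|\varphi(\cdot,t)\|_{L^2}\le\|\varphi_0\|_{L^2}$.

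Next, for $\varphi_0\in\mathcal{D}(A_1)$ the solution is classical, and I would multiply the equation by $\varphi$ and integrate over $(0,L)$. The same boundary computation collapses all endpoint terms into
\[ \tfrac12\tfrac{d}{dt}\|\varphi(\cdot,t)\|_{L^2}^2 = -\tfrac12\big(\varphi(0,t)^2+\varphi_x(0,t)^2\big) . \]
Integrating in $t$ yields $\int_0^T(\varphi(0,t)^2+\varphi_x(0,t)^2)\,dt=\|\varphi_0\|^2-\|\varphi(\cdot,T)\|^2\le\|\varphi_0\|^2$, which is precisely the claimed trace estimate. For the Kato smoothing bound I would multiply by $x\varphi$ and integrate over $(0,L)\times(0,T)$; the boundary conditions again annihilate the endpoint terms except one, leaving
\[ \tfrac32\int_0^T\!\!\int_0^L \varphi_x^2\,dx\,dt = \tfrac12\int_0^T\!\!\int_0^L\varphi^2\,dx\,dt-\tfrac12\int_0^L x\,\varphi(x,T)^2dx+\tfrac12\int_0^L x\,\varphi_0^2dx-\int_0^T\varphi(0,t)\varphi_x(0,t)\,dt . \]
The first term is $\le\tfrac{T}{2}\|\varphi_0\|^2$ by the contraction bound, the second is dropped since it is $\le 0$, the third is $\le\tfrac{L}{2}\|\varphi_0\|^2$, and the last is $\le\tfrac12\int_0^T(\varphi(0,t)^2+\varphi_x(0,t)^2)\,dt\le\tfrac12\|\varphi_0\|^2$ by the trace estimate just obtained. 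Combined with the $L^\infty_tL^2_x$ bound this gives $\|\varphi\|_{X_T}\le C\|\varphi_0\|_{L^2}$.

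The main obstacle is not any single estimate — each multiplier identity is elementary once the boundary conditions are inserted — but rather the rigor of passing from classical to $L^2$ data. Every integration by parts presupposes $\varphi_0\in\mathcal{D}(A_1)$ so that $\varphi(0,t)$ and $\varphi_x(0,t)$ are genuine traces; I would therefore prove the estimates on $\mathcal{D}(A_1)$ and then observe that the trace map $\varphi_0\mapsto(\varphi(0,\cdot),\varphi_x(0,\cdot))$ is bounded from $L^2(0,L)$ into $L^2(0,T)^2$, hence extends continuously to all $\varphi_0\in L^2(0,L)$, thereby defining the traces and validating both the trace estimate and the $X_T$-bound in the general case. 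A secondary technical point is the verification of the range (or adjoint-dissipativity) condition in the generation step, but this is routine given that dissipativity already forces uniqueness and the explicit characteristic equation supplies existence.
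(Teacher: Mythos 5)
Your proposal is correct: the generation of a contraction semigroup via dissipativity of $A_1$ and $A_1^*$, the energy identity giving $\tfrac12\tfrac{d}{dt}\|\varphi\|^2=-\tfrac12(\varphi(0,t)^2+\varphi_x(0,t)^2)$, and the $x\varphi$ multiplier yielding the $L^2(0,T;H^1(0,L))$ bound all check out, including the boundary-term bookkeeping with the conditions $\varphi(L,t)=\varphi_x(L,t)=0$ and $\varphi(0,t)+\varphi_{xx}(0,t)=0$. The paper omits its own proof and simply cites Rosier's 1997 argument, which is precisely this combination of semigroup generation, multiplier identities on $\mathcal{D}(A_1)$, and extension by density, so your route is essentially the same as the one the paper invokes.
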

\noindent {\bf Proof.} The proof is very similar to that of \cite{97rosier} and is therefore
omitted.
$\blacksquare$

\medskip
Thus, when $\va _0 \in L^2 (0,L)$, the corresponding solution $\va
$ has the trace $\va (L, \cdot )\in L^2 (0,T)$. The next theorem
reveals that  $\va $ has a stronger trace regularity: $\va (L,
\cdot )\in H^{\frac13} (0,T)$. It will play an important role to
establish exact controllability of the system (\ref{kdv}) as shown
in the next section.

\begin{theo}[Hidden regularities]\label{adprop1}
For any $\va_0 \in L^2(0,L)$, the solution $\varphi \in X_T$ of
IBVP \eqref{2.3} possesses the following sharp trace properties
\[ \sup
_{0<x<L} \| \partial _x^{j} \va (x,
 \cdot)\|_{H^{\frac{1-j}{3}} (0,T)} \leq C_j   \|\va_0\|_{L^2(0,L)}\]
 for $j=0,1,2$.

\end{theo}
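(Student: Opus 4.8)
The plan is to reduce the full sharp trace estimate, over all orders $j=0,1,2$ and uniformly in $x$, to a single scalar bound on the Dirichlet trace at the left endpoint, and then to obtain that bound by a Laplace (resolvent) analysis of the semigroup $S(t)$. First I would work with $\varphi_0\in\D(A_1)$ (which is dense in $L^2(0,L)$, so that every quantity below is finite) and observe that the solution $\varphi=S(t)\varphi_0$ of \eqref{2.3} is simultaneously a solution of the forward system \eqref{2.2} with source $f\equiv 0$, initial datum $u_0=\varphi_0$, and boundary data
\[ g_1(t)=\varphi(0,t), \qquad g_2(t)=\varphi(L,t)=0, \qquad g_3(t)=\varphi_x(L,t)=0, \]
the last two vanishing precisely because of the boundary conditions imposed in \eqref{2.3}. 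By the uniqueness part of Proposition \ref{regular}, $\varphi$ is the solution furnished by that proposition, so the sharp trace estimate \eqref{h-2} applies and gives, for $j=0,1,2$,
\[ \sup_{0<x<L}\|\partial_x^j\varphi(x,\cdot)\|_{H^{\frac{1-j}{3}}(0,T)} \le C_j\left(\|\varphi_0\|_{L^2(0,L)}+\|\varphi(0,\cdot)\|_{H^{\frac13}(0,T)}\right). \]
Hence the entire theorem — for all three values of $j$ and uniformly in $x$ up to both endpoints — reduces to the single scalar estimate $(\star)$: $\;\|\varphi(0,\cdot)\|_{H^{\frac13}(0,T)}\le C\,\|\varphi_0\|_{L^2(0,L)}$. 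Note that Proposition \ref{fistpart} already gives $\varphi(0,\cdot)\in L^2(0,T)$ with a bound by $\|\varphi_0\|_{L^2}$; the content of $(\star)$ is the gain of $\tfrac13$ of a derivative, a genuine smoothing effect invisible to the multiplier/energy method behind Proposition \ref{fistpart}. Once $(\star)$ is established for $\varphi_0\in\D(A_1)$ with a constant independent of the datum, the theorem extends to all $\varphi_0\in L^2(0,L)$ by density, since the relevant trace maps are bounded into the corresponding spaces.

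To prove $(\star)$ I would pass to the Laplace transform in time, writing $\hat\varphi(x,s)=\int_0^\infty e^{-st}\varphi(x,t)\,dt$ on a vertical line $\mathrm{Re}\,s=\gamma$ lying to the right of the spectrum of $A_1$. The equation becomes the parametrized boundary-value problem
\[ \hat\varphi_{xxx}+\hat\varphi_x+s\,\hat\varphi=\varphi_0, \qquad \hat\varphi(L,s)=\hat\varphi_x(L,s)=0, \qquad \hat\varphi(0,s)+\hat\varphi_{xx}(0,s)=0. \]
Solving this third-order ODE in terms of the exponentials $e^{\mu_k(s)x}$, where $\mu_1,\mu_2,\mu_3$ are the roots of $\mu^3+\mu+s=0$, yields an explicit formula for $\hat\varphi(0,s)$ as a ratio whose denominator is the determinant $\Delta(s)$ produced by imposing the three boundary conditions. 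Since $|\mu_k(s)|\sim|s|^{1/3}$ for $|s|$ large, a careful analysis of this formula on the contour gives the algebraic decay needed so that, by Plancherel,
\[ \|\varphi(0,\cdot)\|_{H^{\frac13}(0,T)}^2 \;\lesssim\; \int_{\RR}(1+|\beta|)^{\frac23}\,|\hat\varphi(0,\gamma+i\beta)|^2\,d\beta \;\lesssim\; \|\varphi_0\|_{L^2(0,L)}^2, \]
which is exactly $(\star)$.

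The main obstacle is the uniform-in-$s$ control of this resolvent computation. One must show that the boundary determinant $\Delta(s)$ does not vanish on the relevant contour and is bounded below in modulus as $|s|\to\infty$, so that the solution operator is well defined and carries the correct power of $|s|^{1/3}$, and then track exactly the algebraic gain that the Robin condition $\varphi(0)+\varphi_{xx}(0)=0$ contributes, extracting precisely the $\tfrac13$-derivative smoothing and no less. This is the same circle of root-asymptotics and stationary-phase estimates that underlies Propositions \ref{traces} and \ref{regular} in \cite{zhang-1}, and I would follow it. Alternatively, one may decompose $\varphi$ as the restriction to $(0,L)$ of the free evolution $e^{-t(\partial_x+\partial_x^3)}\tilde\varphi_0$ of an $L^2(\RR)$-extension $\tilde\varphi_0$ of $\varphi_0$ — for which the sharp Kato smoothing yields the $H^{\frac13}$ trace bound at $x=0$ directly — plus a boundary-forced correction with zero initial data whose boundary data are the now-controlled traces of the free evolution, thereby reducing $(\star)$ to trace estimates for the boundary-forcing operator associated with \eqref{2.3}.
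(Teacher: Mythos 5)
Your proposal is sound in outline but follows a genuinely different route from the paper. Your opening move --- viewing $\varphi$ as the solution of the forward system \eqref{2.2} with $f=0$, $u_0=\varphi_0$, $g_1=\varphi(0,\cdot)$, $g_2=g_3=0$, and invoking uniqueness together with the trace estimate \eqref{h-2} of Proposition \ref{regular} --- is correct (it is the same device the paper uses in the remark following Proposition \ref{regular} to pass between the two boundary-condition configurations), and it cleanly reduces the whole theorem to the single left-endpoint estimate $(\star)$: $\|\varphi(0,\cdot)\|_{H^{1/3}(0,T)}\le C\|\varphi_0\|_{L^2(0,L)}$. The paper never isolates $(\star)$. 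Instead it proves Proposition \ref{propauxakdv} --- well-posedness and sharp traces for $w_t+w_{xxx}=f$ with the \emph{clean} boundary conditions $w_{xx}(0,t)=k_1$, $w(L,t)=k_2$, $w_x(L,t)=k_3$ --- and then runs a contraction mapping in the space ${\cal X}_\beta$ whose norm already contains the hidden-regularity seminorms: the first-order term $-v_x$ is treated as a source, the Robin condition is converted into the boundary datum $k_1=-v(0,\cdot)$, the fixed point of the resulting map $\Gamma$ is exactly the solution of \eqref{2.3} and automatically lies in ${\cal X}_\beta$, and a continuation argument (legitimate because $\beta$ is independent of $\varphi_0$) finishes the proof. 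What the paper's scheme buys is that the explicit Laplace-transform, oscillatory-integral analysis only ever has to be done for the non-Robin configuration, where the characteristic equation is $s+\lambda^3=0$ with explicit cube roots; what your scheme buys is a shorter, more transparent logical structure in which the theorem is seen to be equivalent to one scalar smoothing estimate.

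The caveat is that your plan concentrates all the analytic difficulty in $(\star)$, which you do not actually prove: the uniform lower bounds on the boundary determinant $\Delta(s)$ for the Robin condition $\hat\varphi(0)+\hat\varphi_{xx}(0)=0$ (now with the messier roots of $\mu^3+\mu+s=0$), and the precise extraction of the $\tfrac13$-derivative gain, are asserted (``a careful analysis \dots gives the algebraic decay needed'') rather than carried out. The same is true of your alternative decomposition, which requires trace bounds for the boundary-forcing operator attached to the boundary conditions of \eqref{2.3} itself, i.e.\ a Robin analogue of Proposition \ref{propauxakdv} that is nowhere available in the paper. These estimates are plausibly provable by the methods of \cite{zhang-1}, and nothing in your reduction would fail, but you should be aware that this deferred computation is precisely the step the paper's fixed-point argument is engineered to avoid. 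Finally, your density argument is fine, but you should justify why $\varphi(0,\cdot)\in H^{1/3}(0,T)$ a priori for $\varphi_0\in \D(A_1)$ (for instance by interpolating between $\varphi\in C([0,T];H^3(0,L))$ and $\varphi\in C^1([0,T];L^2(0,L))$), since Proposition \ref{regular} cannot be invoked before its hypothesis $(g_1,0,0)\in{\cal G}_T$ is known to hold.
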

\begin{rem} Equivalently the solutions of the system (\ref{2.2})
has the following sharp trace estimates:
\[ \sup
_{0<x<L} \| \partial _x^{j} \psi (x,
 \cdot)\|_{H^{\frac{1-j}{3}} (0,T)} \leq C_j   \|\psi_T\|_{L^2(0,L)}\]
 for $j=0,1,2$.
\end{rem}

To prove Theorem \ref{adprop1}, we first consider the following
linear system
\be \label{2.4} \bc
w_t + w_{xxx}  =f, \quad (x,t) \in (0,L)\times (0,T),\\
w_{xx}(0,t)=k_1(t), \quad  w(L,t)=k_2(t), \quad  w_x(L, t)=k_3(t),\quad t\in (0,T) ,\\
w(x,0)= w_0(x), \quad x\in (0,L).  \ec \ee

\begin{prop}\label{propauxakdv}
If $w_0 \in L^2(0,L)$, $f\in L^1(0,T;L^2(0,L))$ and
$\vec{k}:=(k_1,k_2,k_3)\in {\cal K}_T $ with  ${\cal K}_T
=H^{-\frac13}(0,T) \times H^{\frac13}(0,T) \times L^2(0,T)$,  then
the system (\ref{2.4}) admits a unique solution $w \in X_T$ which,
in addition,  has the  hidden (or sharp trace) regularities
$$\partial _x^{j}w \in L_x^{\infty}(0,L, H^{\frac{1-j}{3}}(0,T))
\text{ for } j=0,1,2.$$ Moreover, there exist constants $C>0$,
$\,C_j>0$, $j=0,1,2$  such that \ben \label{auxakdv} \|w\|_{X_T} \le
C \left ( \|w_0\|_{L^2(0,L)} + \|\vec{k} \|_{{\cal K}_T}
+\|f\|_{L^1(0,T;L^2(0,L))}\right ), \een where,
$$\|\vec{k}\|_{{\cal K}_T} :=  \left(  \|k_1\|^2_{H^{-\frac13}(0,T)}+
\|k_2\|^2_{H^{\frac13}(0,T)}+ \|k_3\|^2_{L^2(0,T)} \right)^{1/2}$$
and
\[ \sup
_{0<x<L} \| \partial _x^{j} w (x,
 \cdot)\|_{H^{\frac{1-j}{3}} (0,T)} \leq C_j \left ( \|w_0\|_{L^2(0,L)} +
\|\vec{k} \|_{{\cal K}_T} +\|f\|_{L^1 (0,T;L^2(0,L))}\right )\] for
$j=0,1,2$.

\end{prop}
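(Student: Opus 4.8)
By linearity I would write $w = w_I + w_F + w_B$, splitting off the contributions of the initial datum $w_0$ (with $f=0$, $\vec k=0$), the forcing $f$ (with $w_0=0$, $\vec k=0$), and the boundary data $\vec k$ (with $w_0=0$, $f=0$), and treat each piece separately. For $w_I$ I would first verify that the spatial operator $A_0 w = -w'''$ with domain $\{w\in H^3(0,L): w''(0)=w(L)=w'(L)=0\}$ is dissipative: integrating by parts and using the boundary conditions gives $\langle A_0 w, w\rangle = -\tfrac12\,(w'(0))^2 \le 0$, so that (together with the analogous computation for the adjoint) $A_0$ generates a contraction $C_0$-semigroup on $L^2(0,L)$ by Lumer--Phillips. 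The Kato smoothing that upgrades $C([0,T];L^2)$ to $X_T$, and the sharp trace bounds for $w_I$, would then follow from the standard multiplier identities (multiplying the equation by $w$, by $xw$ and by $(T-t)w$) exactly as in \cite{97rosier,03bona-sun-zhang,10RiKZ}. The forcing part $w_F$ would be handled by Duhamel's formula, landing in $X_T$ with the $L^1(0,T;L^2)$ bound and inheriting the trace estimates.

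The genuinely new ingredient is $w_B$, solving $w_t + w_{xxx}=0$ with zero initial datum and boundary data $\vec k$. Here I would take the Laplace transform in $t$, reducing the problem to the ODE $s\hat w + \hat w_{xxx}=0$ in $x$ with the transformed conditions $\hat w_{xx}(0,s)=\hat k_1(s)$, $\hat w(L,s)=\hat k_2(s)$, $\hat w_x(L,s)=\hat k_3(s)$. Writing $\hat w(x,s)=\sum_{j=1}^3 c_j(s)\,e^{\lambda_j(s)x}$ with $\lambda_j(s)$ the three cube roots of $-s$, the coefficients $c_j(s)$ are recovered by Cramer's rule from the $3\times 3$ boundary matrix, whose determinant $\Delta(s)$ is the Lopatinski determinant of the problem. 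Inverting along a vertical contour $\{\mathrm{Re}\,s=\gamma\}$ represents $w_B$ as a sum of explicit boundary integral operators acting on $k_1,k_2,k_3$, and this representation is what all the estimates are built on.

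The heart of the matter is the analysis of these operators. I would first show that $\Delta(s)$ is non-vanishing on the contour (choosing $\gamma$ appropriately) and control its asymptotics; since the critical-length obstruction is a controllability, not a well-posedness, phenomenon, $\Delta(s)$ should stay bounded below away from its zeros. Splitting the contour into a bounded low-frequency piece and the high-frequency regime $|s|\to\infty$, where $\lambda_j(s)\sim |s|^{1/3}$, I would estimate the resulting oscillatory integrals. The scaling $\lambda\sim s^{1/3}$ means one spatial derivative corresponds to $\tfrac13$ of a temporal derivative, which is precisely what reconciles the three data regularities $H^{-\frac13},H^{\frac13},L^2$ with the trace scale $H^{\frac{1-j}{3}}(0,T)$ and produces the $L^2(0,T;H^1)$ smoothing. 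The bounds $\|w\|_{X_T}$ and $\sup_{0<x<L}\|\partial_x^j w(x,\cdot)\|_{H^{(1-j)/3}(0,T)}$ would then follow from Plancherel in $t$ combined with these pointwise-in-$x$ multiplier estimates.

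The main obstacle will be exactly this high-frequency oscillatory-integral analysis: obtaining the sharp $\tfrac13$-derivative gains requires tracking the precise cancellation between the growing and decaying exponentials $e^{\lambda_j(s)x}$ across the sectors where $\mathrm{Re}\,\lambda_j$ changes sign, uniformly in $x\in(0,L)$, and this is where the explicit kernel estimates of \cite{zhang-1,bsz-finite} must be adapted to the present boundary conditions $(w_{xx}(0),w(L),w_x(L))$; the superposition and Duhamel steps are routine by comparison. As an alternative that would bypass the kernel analysis, one could try to reduce \eqref{2.4} directly to system \eqref{Gkdv} or \eqref{2.2}, whose sharp trace regularity is already recorded in Propositions \ref{traces}--\ref{regular}, by a change of variables matching the boundary data correctly.
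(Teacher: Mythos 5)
Your decomposition $w = w_I + w_F + w_B$ and your treatment of the boundary piece $w_B$ (Laplace transform in $t$, Cramer's rule on the $3\times 3$ boundary matrix, inversion along a contour, high-frequency asymptotics of the determinant ratios, Plancherel in time) is exactly the paper's route: this is Lemma \ref{e-1} in the Appendix. The genuine gap is in your treatment of $w_I$ and $w_F$: you claim their sharp trace bounds ``follow from the standard multiplier identities (multiplying by $w$, by $xw$ and by $(T-t)w$).'' They do not. Multipliers yield only $L^2$-type quantities: the $X_T$ bound and $L^2$-in-time boundary traces of $w_x$. The estimates asserted in the proposition require $w(x,\cdot)\in H^{\frac13}(0,T)$ and $w_{xx}(x,\cdot)\in H^{-\frac13}(0,T)$ uniformly in $x$, i.e.\ \emph{fractional} time regularity of pointwise traces. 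No energy identity produces this; even the soft interpolation argument ($w\in L^2_tH^1_x$, $w_t=-w_{xxx}\in L^2_tH^{-2}_x$) only gives $\sup_x\|w(x,\cdot)\|_{H^{\frac16-\epsilon}(0,T)}$ --- which is precisely why Glass--Guerrero's control lives in $H^{\frac16-\epsilon}$ and why this paper needs the sharper $\frac13$ to improve it (Theorem \ref{ex-1}). The paper closes this gap with a different idea: extend $w_0$ (resp.\ $f$) by zero to $\mathbb{R}$, write $W_0(t)w_0 = W_R(t)\tilde w_0 - W_{bdr}(t)\vec p$, where $W_R$ is the Airy group on the line and $\vec p$ collects the traces $(\mu_{xx}(0,\cdot),\mu(L,\cdot),\mu_x(L,\cdot))$ of the whole-line solution $\mu=W_R(t)\tilde w_0$; the whole-line piece has the sharp Kato smoothing traces by the Fourier-analytic results of \cite{10RiKZ}, and the corrector is again handled by Lemma \ref{e-1}. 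The same device treats the Duhamel term. Without this (or an equivalent Fourier-analytic argument), your proof of the trace estimates for $w_I$ and $w_F$ does not go through.

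Your fallback suggestion --- reducing \eqref{2.4} to \eqref{Gkdv} or \eqref{2.2} and citing Propositions \ref{traces}--\ref{regular} --- is also not viable here. First, it is circular within the paper's structure: those propositions are quoted from \cite{zhang-1}, and Proposition \ref{propauxakdv} is proved in the Appendix precisely to supply a self-contained argument of that type. Second, the reduction itself needs the very regularity in question: \eqref{2.4} lacks the transport term $w_x$, and matching its boundary data $(w_{xx}(0,\cdot),w(L,\cdot),w_x(L,\cdot))$ to the data of \eqref{Gkdv} or \eqref{2.2} requires knowing that the unspecified traces of the solution (e.g.\ $w(0,\cdot)\in H^{\frac13}(0,T)$) already have sharp regularity, exactly as in Remark 2.4 of the paper, which invokes \eqref{h-1}--\eqref{h-2}.
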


\noindent {\bf Proof.}
The proof is similar to the one in \cite{zhang-1}. Its sketch will be
presented in the Appendix for the convenience of the interested
readers.
$\blacksquare$\\

\medskip
Now we turn to prove Theorem  \ref{adprop1}.

\medskip
\noindent {\bf Proof of Theorem \ref{adprop1}}.  Let
$${\cal X}_T:= \big\{ u\in X_T; \quad \partial _x^{j}u\in
L^{\infty}_x(0,L; H^{\frac{1-j}{3}}(0,T)), \ j=0,1,2\big\}
$$
which is a Banach space equipped with the norm
\[ \| u\|_{{\cal X}_T} := \|u\|_{X_{T}} + \sum ^2_{j=0} \|
\partial _x^j u\|_{L^{\infty}_x(0,L; H^{\frac{1-j}{3}}(0,T))}.\]

According to Proposition
\ref{propauxakdv}, for any $v\in {\cal X}_{\beta} $ where $0<
\beta \leq T$, and any  $\va _0\in L^2 (0,L)$, the system
\begin{equation}\label{2.5}
\bc
w_t + w_{xxx} =-v_x, \quad (x,t)\in (0,L)\times (0,\beta),\\
w_{xx}(0,t)=-v(0, t), \quad  w(L,t)=0, \quad  w_x(L,t)=0, \quad t\in (0, \beta),\\
w(x,0)= \va _0(x), \quad x\in (0,L) \ec \end{equation} admits a
unique solution $w\in {\cal X}_{\beta }$ and, moreover,
\[ \|w\|_{{\cal X}_{\beta}} \leq C
\left ( \|\va _0 \|_{L^2 (0,L)} +\| v(0, \cdot )\|_{H^{-\frac13}
(0, \beta )} +\| v_x \|_{L^1 (0, \beta; L^2 (0,L))} \right ) \]
where the constant $C$ depends only on $T$. As we have
\[ \| v_x \|_{L^1 (0, \beta; L^2 (0,L))}\leq \beta ^{\frac12}
\|v\|_{{\cal X}_{\beta}}  \] and
\[ \|
v(0, \cdot )\|_{H^{-\frac13} (0, \beta )} \leq \| v(0, \cdot
)\|_{L^2  (0, \beta )}\leq \beta ^{\frac23} \| v(0, \cdot )\|_{L^6
(0, \beta)} \leq C\beta ^{\frac23 } \|v(0, \cdot)\|_{H^{\frac13} (0,
\beta)} \leq C \beta ^{\frac23} \| v\|_{{\cal X}_{\beta}} ,\]  the
system (\ref{2.5})  defines a map $\Gamma $ from the space ${\cal
X}_{\beta} $ to ${\cal X}_{\beta }$ for any $0< \beta \leq
\max\{1,T\} $ as follows
\[ w= \Gamma (v) \ \mbox{for any $v\in {\cal X}_{\beta}$} \]
where $w\in {\cal X}_{\beta} $ is the corresponding solution of
(\ref{2.5}) and
\[ \| \Gamma (v)\|_{{\cal X}_{\beta}} \leq C_1\|\va _0 \|_{L^2 (0,L)}
+C_2 \beta^{\frac12} \| v\|_{{\cal X}_{\beta}}\] where $C_1 $ and
$C_2$ are two constants depending only on $T$. Choose $r>0$ and
$0<\beta\leq \max\{1,T\} $ such that
\[ r=2C_1 \|\va _0\|_{L^2 (0,L)}, \quad 2C_2 \beta ^{\frac12} \leq \frac12.\] Then, for any
 $$v\in B_{\beta,
r} = \left\{ v \in {\cal X}_{\beta}; \quad \|v\|_{{\cal
X}_{\beta}}\le r\right\},$$ we have
\[ \| \Gamma (v)\|_{{\cal X}_{\beta }} \leq r. \]
Moreover, for any $v_1, \ v_2 \in B_{\beta, r}$, we get
\[ \| \Gamma (v_1)-\Gamma (v_2)\|_{{\cal X}_{\beta }}\leq 2C_2 \beta ^{\frac12} \|
v_1-v_2\|_{{\cal X}_{\beta}}\leq \frac12 \| v_1-v_2\|_{{\cal X}
_{\beta}} .\]
 Therefore the map $\Gamma$ is a contraction mapping on $B_{\beta,r}$. Its fixed point $w=\Gamma(w)\in
 {\cal X}_{\beta}$
 is the desired solution for $t\in (0,\beta)$.  As the chosen
 $\beta $ is independent of $\va _0$, the standard continuation
 extension argument yields that the solution $w$ belongs to ${\cal X}_T$.
 The proof is complete. $\blacksquare$

 \medskip
 Finally we conclude this  section  with an elementary estimate
 for solutions of system (\ref{2.3}).

\begin{prop} \label{bdness}
 Any solution $\varphi$ of the adjoint problem \eqref{2.3} with initial data
 $\varphi_0 \in L^2(0,L)$ satisfies
     $$\|\va_0\|_{L^2(0,L)}^2 \le \frac{1}{T}
     \|\va \|_{L^2((0,T)\times(0,L))}^2
     + \| \va_x(0, \cdot) \|_{L^2(0,T)}^2 + \| \va(0,\cdot)\|_{L^2(0,T)}^2.$$

 \end{prop}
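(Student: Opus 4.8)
The plan is to establish the estimate via the classical energy (multiplier) method, exploiting the dissipative structure built into the boundary conditions of \eqref{2.3}. First I would multiply the equation $\va_t + \va_x + \va_{xxx} = 0$ by $\va$ and integrate over $(0,L)$. The term $\int_0^L \va \va_t\, dx$ gives $\frac12 \frac{d}{dt}\|\va(\cdot,t)\|_{L^2(0,L)}^2$, while $\int_0^L \va\va_x\,dx = \frac12[\va^2]_0^L$ and, after two integrations by parts, $\int_0^L \va\va_{xxx}\,dx = [\va\va_{xx}]_0^L - \frac12[\va_x^2]_0^L$.

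Second, I would evaluate the boundary contributions using the three boundary conditions $\va(L,t)=0$, $\va_x(L,t)=0$ and $\va(0,t)+\va_{xx}(0,t)=0$. All terms at $x=L$ vanish, and at $x=0$ the coupling $\va_{xx}(0,t)=-\va(0,t)$ is exactly what makes the remaining terms combine with a favourable (negative) sign. A short computation then yields the energy identity
$$\frac{d}{dt}\|\va(\cdot,t)\|_{L^2(0,L)}^2 = -\va(0,t)^2 - \va_x(0,t)^2,$$
so the $L^2$-norm is nonincreasing in $t$.

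Third, I would integrate this identity in time. Integrating over $(0,t)$ gives
$$\|\va_0\|_{L^2(0,L)}^2 = \|\va(\cdot,t)\|_{L^2(0,L)}^2 + \int_0^t\big(\va(0,s)^2 + \va_x(0,s)^2\big)\,ds,$$
and then integrating this in $t$ over $(0,T)$ and dividing by $T$ produces the claimed inequality. The double time integral is handled by Fubini, using $\int_0^T\!\!\int_0^t g(s)\,ds\,dt = \int_0^T (T-s)\,g(s)\,ds \le T\int_0^T g(s)\,ds$ for the nonnegative integrand $g = \va(0,\cdot)^2 + \va_x(0,\cdot)^2$; this is precisely what accounts for the coefficient $\tfrac1T$ on the interior term and the coefficient $1$ on the two boundary terms.

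The only genuine subtlety — and the step I would treat most carefully — is that the manipulations above require more regularity than a generic solution with $\va_0 \in L^2(0,L)$ a priori possesses: the pointwise traces and the integrations by parts are rigorously justified only for smooth data. I would therefore first prove the energy identity for $\va_0 \in \mathcal{D}(A_1)$, where $\va$ is a classical solution and every manipulation is legitimate, and then pass to the limit for general $\va_0 \in L^2(0,L)$ by density, invoking Proposition \ref{fistpart} — which guarantees both $\|\va\|_{X_T}\le C\|\va_0\|_{L^2(0,L)}$ and the boundary trace bounds $\va(0,\cdot),\,\va_x(0,\cdot)\in L^2(0,T)$ — to ensure that each side of the inequality depends continuously on $\va_0$ in the $L^2(0,L)$ norm.
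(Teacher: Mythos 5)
Your proof is correct and follows essentially the same route as the paper: the paper multiplies the equation by $(T-t)\varphi$ and integrates over $(0,L)\times(0,T)$ in one step, and your two-stage version (energy identity from the multiplier $\varphi$, then double integration in time via Fubini, which reproduces exactly the weight $T-t$) is the same computation, with the same use of the boundary conditions $\varphi(L,t)=\varphi_x(L,t)=0$ and $\varphi_{xx}(0,t)=-\varphi(0,t)$ to obtain the dissipative boundary terms. Your closing remark on justifying the formal manipulations by a density argument via Proposition \ref{fistpart} is a sound extra point of rigor that the paper leaves implicit.
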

\noindent {\bf Proof.}
Multiplying  both sides of the equation in (\ref{2.3}) by
$(T-t)\va$ and integrating by parts over $(0,L)\times (0,T)$, we get $$\izl (T-t) \va^2 \big|_0^T dx+\int_0^T (T-t)
\left(  \va^2(0,t)+ \va_x^2(0,t)  \right)dt
 +  \izt \izl  \varphi^2 dx dt  =0.$$
Consequently,
 \ben\label{q3}
    \izl \va_0^2 dx &\le&\frac{1}{T}\izl \izt \va^2 dt dx +  \izt \va^2(0,t)dt  +  \izt \va_x^2(0,t)
    dt.
 \een
  $\blacksquare$

\medskip
Equivalently, the following estimate holds for solutions $\psi $
of the system (\ref{10-1}): \be \label{z-1}\|\psi_T\|_{L^2(0,L)}^2
\le \frac{1}{T}
     \|\psi\|_{L^2((0,T)\times (0,L))}^2
     + \| \psi_x(L, \cdot) \|_{L^2(0,T)}^2 + \| \psi
     (L,\cdot)\|_{L^2(0,T)}^2.\ee
As a comparison, it is worth pointing out that for the adjoint
system of (\ref{2.2}), which is given by  \be\label{2.2-ad} \bc
\nu_t + \nu_x +  \nu_{xxx}  =0, \quad \nu(x,T)= \nu _T(x),\quad (x, t)\in (0,L)\times (0,T),\\
\nu (L,t)=0, \quad  \nu_x(0,t)=0, \quad \nu (0,t)=0,
 \ec\ee
 the following inequality holds
 \be\label{z-2}\|\nu_T\|_{L^2(0,L)}^2 \le \frac{1}{T}
     \|\nu\|_{L^2((0,T)\times (0,L))}^2
     + \| \nu_x(L, \cdot) \|_{L^2(0,T)}^2 .\ee
     The extra term $\| \psi
     (L,\cdot)\|_{L^2(0,T)}^2$ in (\ref{z-1})  brings new challenges in
     establishing the observability  of the adjoint system
     (\ref{10-1}).
%%%%%%%%%%%%%%%%%%%%%%%%%%%%%%%%%%%%%%%%%%%%%%%%
\section{Linear control systems}\label{sec-lin}
\setcounter{equation}{0}
Consideration is first given to  boundary controllability of the
linear system
 \be \label{3-1}
\begin{cases}
y_t + y_x + y_{xxx} =0, \quad  (x,t)\in (0,L)\times (0,T),\\
y(0,t)=0, \, y_x(L,t)=h_2(t), \, y_{xx}(L,t)=0, \quad t\in (0,T),   \end{cases} \ee
which employs  only one control  input $h_2\in L^2(0,T)$.\\
\begin{prop}\label{pp}  Let   $L \notin
\FF$  (see \eqref{FF}) and $T>0$ be given. There exists a bounded linear operator
$$\Psi : L^2 (0,L)\times L^2 (0,L)\to L^2 (0,T) $$ such that  any
$y_0,\ y_T \in L^2 (0,L)$, if one chooses  $h_2=\Psi (y_0, y_T)$,
then system (\ref{3-1}) admits a solution $y\in X_T$
satisfying
\[ y|_{t=0}=y_0 , \qquad y|_{t=T} =y_T  .\]
\end{prop}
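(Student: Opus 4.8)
The plan is to use the Hilbert Uniqueness Method (HUM): reduce the exact control problem to an observability inequality for the adjoint system and then construct $\Psi$ from the associated Gramian. Writing the solution of \eqref{3-1} with $y|_{t=0}=y_0$ as $y(T)=W_0(T)y_0+B_Th_2$, where $B_T$ denotes the boundary input-to-state map at time $T$, the control requirement $y|_{t=T}=y_T$ is equivalent to $B_Th_2=y_T-W_0(T)y_0$. Integrating the equation by parts against the backward adjoint solution $\psi$ of \eqref{10-1} and using the boundary conditions of \eqref{3-1} and \eqref{10-1}, all boundary contributions cancel except one, giving $\langle B_Th_2,\psi_T\rangle_{L^2(0,L)}=-\int_0^T h_2(t)\,\psi_x(L,t)\,dt$, so $B_T^*\psi_T=-\psi_x(L,\cdot)$. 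Hence the surjectivity of $B_T$ onto $L^2(0,L)$, and therefore the solvability of the control problem, is equivalent to the observability inequality \eqref{e-11}, namely $\|\psi_T\|_{L^2(0,L)}\le C\|\psi_x(L,\cdot)\|_{L^2(0,T)}$ for solutions of \eqref{10-1}; after the change of variables this is the same as $\|\va_0\|_{L^2(0,L)}\le C\|\va_x(0,\cdot)\|_{L^2(0,T)}$ for the forward system \eqref{2.3}.

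Granting \eqref{e-11}, the construction of $\Psi$ is routine: I would introduce the Gramian $\Lambda=B_TB_T^*:L^2(0,L)\to L^2(0,L)$, determined by $\langle\Lambda\psi_T,\psi_T\rangle=\|\psi_x(L,\cdot)\|_{L^2(0,T)}^2$. It is bounded (by the trace bounds of Proposition \ref{fistpart}), self-adjoint and, by \eqref{e-11}, coercive, hence boundedly invertible. Then $\Psi(y_0,y_T):=B_T^*\Lambda^{-1}\bigl(y_T-W_0(T)y_0\bigr)$ yields a control $h_2$ with $B_Th_2=y_T-W_0(T)y_0$, the bounds on $\Lambda^{-1}$, $B_T^*$ and $W_0(T)$ show $\Psi$ is a bounded linear operator $L^2(0,L)\times L^2(0,L)\to L^2(0,T)$, and Proposition \ref{allsys} furnishes the corresponding solution $y\in X_T$ together with the two endpoint identities.

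The real work is \eqref{e-11}. Proposition \ref{bdness} already supplies the multiplier estimate
\[ \|\va_0\|_{L^2(0,L)}^2\le \tfrac1T\|\va\|_{L^2((0,T)\times(0,L))}^2+\|\va_x(0,\cdot)\|_{L^2(0,T)}^2+\|\va(0,\cdot)\|_{L^2(0,T)}^2, \]
so it remains to absorb the two lower-order terms into $\|\va_x(0,\cdot)\|_{L^2(0,T)}^2$. I would argue by compactness--uniqueness. If \eqref{e-11} fails there is a sequence $\va_0^n$ with $\|\va_0^n\|_{L^2(0,L)}=1$ and $\|\va_x^n(0,\cdot)\|_{L^2(0,T)}\to0$. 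By Proposition \ref{fistpart} the solutions $\va^n$ are bounded in $X_T$, and the crucial point is that Theorem \ref{adprop1} bounds the trace $\va^n(0,\cdot)$ in $H^{\frac13}(0,T)$, which embeds compactly into $L^2(0,T)$ — exactly the gain the classical multiplier method cannot provide. Together with an Aubin--Lions argument (controlling $\partial_t\va^n$ in a negative-order space via the equation) this produces a subsequence with $\va^n\to\va$ strongly in $L^2((0,T)\times(0,L))$ and $\va^n(0,\cdot)\to\va(0,\cdot)$ in $L^2(0,T)$. Passing to the limit, the normalization forces $\tfrac1T\|\va\|^2+\|\va(0,\cdot)\|^2>0$, so $\va\not\equiv0$, while $\va$ solves \eqref{2.3} with $\va_x(0,\cdot)\equiv0$.

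The decisive and most delicate step — the one I expect to be the main obstacle — is to rule out such a nontrivial $\va$ when $L\notin\FF$. Following \cite{97rosier}, the subspace $N_T=\{\va_0\in L^2(0,L):\va_x(0,\cdot)\equiv0\text{ on }(0,T)\}$ is finite dimensional (its unit ball is precompact by the compactness just used) and invariant under the semigroup, so its generator $A_1$ has an eigenvalue $\lambda\in\CC$ with eigenfunction $0\ne f\in\D(A_1)$ satisfying in addition $f'(0)=0$. This collapses the problem to the overdetermined ODE
\[ f'''+f'+\lambda f=0,\qquad f(0)+f''(0)=0,\quad f(L)=f'(L)=0,\quad f'(0)=0. \]
Writing the roots of $r^3+r+\lambda=0$ as $r_1,r_2,r_3$, so that $r_1+r_2+r_3=0$ and $r_1^2+r_1r_2+r_2^2=-1$, and setting $a=r_1L$, $b=r_2L$ (hence $-(a+b)=r_3L$), the first relation gives exactly $L^2=-(a^2+ab+b^2)$. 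The requirement that the four boundary conditions admit a nonzero coefficient vector $(c_1,c_2,c_3)$ for $f=\sum_j c_je^{r_jx}$ is a rank-deficiency (vanishing-determinant) condition which, after using the identity $r_j(1+r_j^2)=-\lambda$ coming from the characteristic equation, simplifies to $e^{r_jL}/r_j^2$ being independent of $j$, i.e. the transcendental relations defining $\FF$ in \eqref{FF}. Thus a nontrivial $f$ exists precisely when $L\in\FF$; for $L\notin\FF$ we obtain a contradiction, \eqref{e-11} holds, and the HUM construction of $\Psi$ goes through. The crux is therefore this last reduction, where the hidden regularity \eqref{e-3} of Theorem \ref{adprop1} is indispensable for the compactness, and the determinant algebra must be pushed all the way to the clean characterization of $\FF$.
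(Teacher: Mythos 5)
Your proposal is correct and follows essentially the same route as the paper's proof: duality plus a Gramian/Lax--Milgram inversion for the construction of $\Psi$, the observability inequality \eqref{e-11} proved by compactness--uniqueness combining Proposition \ref{bdness}, the hidden $H^{\frac13}$ trace regularity of Theorem \ref{adprop1}, and an Aubin--Lions argument (this is the paper's Lemma \ref{case3}), and finally the spectral reduction of the unique-continuation space $N_T$ to an overdetermined third-order ODE whose rank-deficiency condition yields the set $\FF$ (the paper's Lemma \ref{next}). The only cosmetic deviations are that you run the unique-continuation step on the forward form \eqref{2.3} (observation at $x=0$) rather than the backward form \eqref{3-2} (observation at $x=L$), which is equivalent under $x\mapsto L-x$ and amounts to the relabeling $(a,b)\mapsto(-a,-b)$ under which $\FF$ is invariant, together with an immaterial sign in the duality identity.
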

As it is well-known, the exact controllability  of the system
(\ref{3-1}) is related to the observability of its adjoint system
\be \label{3-2} \begin{cases}
\psi _t + \psi_x +  \psi_{xxx}  =0, \quad (x,t)\in (0,L)\times (0,T),\\
\psi(0,t)=0, \quad  \psi_x(0,t)=0, \quad   \psi(L,t)+\psi_{xx}(L,t)=0, \quad t\in (0,T),\\
\psi(x,T)= \psi_T(x), \quad x\in (0,L).  \end{cases} \ee

 \begin{lemm}\label{case3}
 For all $T>0$ and  all $L \notin  \FF$   there exists $C=C(L,T)>0$ such that  for any
  $  \psi_T\in L^2(0,L)$,
 the solution   $\psi$ of  \eqref{3-2} satisfies
 \be \label{3.3} \|\psi_T\|_{L^2(0,L)}\le C    \|\psi_x(L,t)\|_{L^2(0,T)}. \ee
 \end{lemm}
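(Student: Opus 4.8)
The plan is to establish \eqref{3.3} by the classical compactness--uniqueness method, the essential new ingredient being the sharp trace (hidden) regularity of Theorem \ref{adprop1}. It is convenient to pass to the equivalent forward problem: under $x'=L-x$, $t'=T-t$ the backward adjoint system \eqref{3-2} becomes \eqref{2.3}, with $\varphi=S(t)\varphi_0$, so that $\|\psi_T\|_{L^2(0,L)}=\|\varphi_0\|_{L^2(0,L)}$ and $\|\psi_x(L,\cdot)\|_{L^2(0,T)}=\|\varphi_x(0,\cdot)\|_{L^2(0,T)}$; thus \eqref{3.3} is equivalent to
\[ \|\varphi_0\|_{L^2(0,L)}\le C\,\|\varphi_x(0,\cdot)\|_{L^2(0,T)}\qquad\text{for all }\varphi_0\in L^2(0,L). \]
The starting point is the weak observability inequality of Proposition \ref{bdness},
\[ \|\varphi_0\|_{L^2(0,L)}^2\le \frac1T\|\varphi\|_{L^2((0,T)\times(0,L))}^2+\|\varphi_x(0,\cdot)\|_{L^2(0,T)}^2+\|\varphi(0,\cdot)\|_{L^2(0,T)}^2, \]
so the whole problem reduces to absorbing the two lower-order terms $\frac1T\|\varphi\|_{L^2}^2$ and $\|\varphi(0,\cdot)\|_{L^2(0,T)}^2$ into the observation.

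I would argue by contradiction. If the inequality failed there would exist $\varphi_0^n\in L^2(0,L)$ with $\|\varphi_0^n\|_{L^2}=1$ and $\|\varphi^n_x(0,\cdot)\|_{L^2(0,T)}\to0$, where $\varphi^n=S(t)\varphi_0^n$. By Proposition \ref{fistpart} the sequence $\{\varphi^n\}$ is bounded in $X_T$, hence in $L^2(0,T;H^1(0,L))$, and the equation bounds $\{\partial_t\varphi^n\}$ in $L^2(0,T;H^{-2}(0,L))$; Aubin--Lions then yields a subsequence converging strongly in $L^2((0,T)\times(0,L))$. This is exactly where Theorem \ref{adprop1} is used: it bounds $\{\varphi^n(0,\cdot)\}$ in $H^{\frac13}(0,T)$, so by the compact embedding $H^{\frac13}(0,T)\hookrightarrow L^2(0,T)$ the boundary traces $\{\varphi^n(0,\cdot)\}$ are precompact in $L^2(0,T)$ as well. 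Letting $\varphi_0^n\rightharpoonup\varphi_0$ weakly in $L^2(0,L)$, the solutions converge to $\varphi=S(t)\varphi_0$; the weak observability inequality combined with the two strong convergences gives $\frac1T\|\varphi\|_{L^2}^2+\|\varphi(0,\cdot)\|_{L^2(0,T)}^2\ge1$, so $\varphi\not\equiv0$, while $\varphi^n_x(0,\cdot)\to0$ forces $\varphi_x(0,\cdot)\equiv0$ on $(0,T)$.

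It remains to prove the unique continuation statement: \emph{if a solution $\varphi$ of \eqref{2.3} satisfies $\varphi_x(0,\cdot)\equiv0$ on $(0,T)$ and $L\notin\FF$, then $\varphi\equiv0$}. As in \cite{97rosier}, the unobservable set $N=\{\varphi_0\in L^2(0,L):\ (S(\cdot)\varphi_0)_x(0,\cdot)\equiv0\text{ on }(0,T)\}$ is a finite-dimensional subspace invariant under $A_1$, so if $N\ne\{0\}$ its complexification contains an eigenvector: there are $\lambda\in\CC$ and $\phi\in H^3(0,L)\setminus\{0\}$ with $\phi'''+\phi'+\lambda\phi=0$ satisfying the four conditions $\phi'(0)=0$, $\phi(0)+\phi''(0)=0$, $\phi(L)=0$ and $\phi'(L)=0$. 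Writing $\phi=\sum_{j=1}^3 c_je^{r_jx}$ with $r_1,r_2,r_3$ the roots of $r^3+r+\lambda=0$ (so that $\sum_j r_j=0$ and $\sum_j r_j^2=-2$), the first two conditions determine $(c_1,c_2,c_3)$ up to a scalar, and substituting into $\phi(L)=\phi'(L)=0$ and simplifying with the relations among the $r_j$ reduces the solvability condition, upon setting $a_j=r_jL$, to $\frac{e^{a_1}}{a_1^2}=\frac{e^{a_2}}{a_2^2}=\frac{e^{a_3}}{a_3^2}$; since $\sum_j r_j^2=-2$ forces $L^2=-(a_1^2+a_1a_2+a_2^2)$ automatically, such a nonzero $\phi$ exists for some $\lambda$ if and only if $L\in\FF$. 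Hence $L\notin\FF$ gives $N=\{0\}$, which closes the contradiction and proves \eqref{3.3}.

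The main obstacle is the lower-order boundary term $\|\varphi(0,\cdot)\|_{L^2(0,T)}$: it is precisely this term that the usual multiplier/compactness machinery cannot dispose of (this is the origin of the redundant term in \eqref{e-2}), because the energy estimate of Proposition \ref{fistpart} gives the trace only in $L^2(0,T)$, which carries no compactness. The resolution is to invoke the hidden regularity of Theorem \ref{adprop1}, which promotes $\varphi(0,\cdot)$ to $H^{\frac13}(0,T)$ and thereby makes this trace a compact term in the compactness--uniqueness scheme. The remaining delicate point is algebraic rather than analytic: the reduction of the eigenvalue problem to the defining relations of $\FF$, together with the proof that $\FF$ is nonempty and countable, is an explicit determinant computation that should be carried out exactly as for the set $\mathcal N$ in \cite{09glass-guerrero}.
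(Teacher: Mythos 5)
Your proof is correct and follows essentially the same route as the paper's: a compactness--uniqueness argument in which Proposition \ref{fistpart}, Aubin--Lions compactness, and crucially the hidden regularity of Theorem \ref{adprop1} (giving compactness of the boundary trace via $H^{\frac13}(0,T)\hookrightarrow L^2(0,T)$) absorb the lower-order terms in Proposition \ref{bdness}, followed by reduction of the unique continuation step to the spectral problem whose nontrivial solvability characterizes $\FF$ (the paper's Lemma \ref{next}, likewise deferred to a determinant computation). The only differences---working with the forward system \eqref{2.3} rather than \eqref{3-2}, and passing to a weak limit instead of showing the terminal data form a Cauchy sequence---are cosmetic.
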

\noindent {\bf Proof.} Proceeding as in   \cite{97rosier}, if (\ref{3.3}) is false, then
there exists  a sequence $\{\psi^n_T\}_{n\in \NN}\in \lzl$ with
$\|\psi^n_T\|_{L^2 (0,L)} =1$ such that the corresponding solutions
of \eqref{3-2} satisfy
 $$1= \|\psi_T^n\|_{\lzl} > n \|\psi_x^n(L,\cdot )\|_{\lzt}.$$
Thus $\| \psi_x^n(L, \cdot ) \|_{\lzt} \to 0$ as $n\to \infty$.
  By Proposition \ref{fistpart} and Theorem \ref{adprop1}, the sequences  $\{ \psi^n\}_{n\in \NN}$  and $\{\psi ^n (L, t)\}_{n\in \NN}$ are  bounded in
   $L^2(0,T;H^1(0,L))$
  and $H^{\frac13} (0,T)$, respectively. In addition, according to
Proposition  \ref{bdness}
  \be\label{equ}\|\psi^n_T\|_{L^2(0,L)}^2 &\le& \frac{1}{T} \|\psi ^n\|_{L^2(0,T; L^2(0,L))}^2 +
   \| \psi^n_x(L, \cdot) \|_{L^2(0,T)}^2 + \| \psi^n(L, \cdot
   )\|_{L^2(0,T)}^2.
  \ee
Since, $\psi^n_t=-(\psi^n_x +\psi^n_{xxx})$ is bounded in
$L^2(0,T;H^{-2}(0,L)$ and by the embedding
$$H^1(0,L) \hookrightarrow L^2(0,L)\hookrightarrow  H^{-2}(0,L)$$
the sequence $\{\psi^n\}_{n\in \NN}$ is relatively compact in
$L^2(0,T;L^2(0,L))$ (see \cite{simon}). Furthermore,  the second term on the right in \eqref{equ}
converges to zero in $L^2(0,T)$, and by the compact embedding
$$H^{\frac13}(0,T)\hookrightarrow L^2(0,T)$$ the sequence
$\{\psi^n(L, \cdot)\}_{n\in \NN}$ has a convergent subsequence on $L^2(0,T)$.
Therefore
 $\{\psi^n_T\}_{n\in \NN}$ is a $L^2(0,L)$-Cauchy
sequence.
 Let us denote  $\psi_T=\lim_{n\to \infty}\psi_T^n$ and  $\psi$ be the corresponding solution of (3.2).
 Since
 $\psi_x^n(L, t)\to \psi _x(L,t)$ as $n\to \infty$ in $\lzt$ and  $\|\psi_T^n\|_{L^2 (0,L)}=1$ for any $n$,
 we have   $\| \psi_T\|_{\lzl}=1$ and
  $\psi_x(L,t)=0$.  By  the following Lemma \ref{next},  one can conclude that  $\psi\equiv0$,
  therefore $\psi_T(x)\equiv 0$ which  contradicts the fact that $\|\psi _T\|_{L^2 (0,L)}=1$.
  $\blacksquare$\\

\begin{lemm}\label{next} For given $T>0$, let us define
$$N_T=\{\psi_T\in \lzl :  \ \psi\in X_T \ \  \text{is the mild solution of}
\ (\ref{3-2}) \  \text{satisfying} \  \psi_x(L,\cdot)=0  \
\text{in} \ L^2 (0,T) \}.$$ Then, $N_T=\{ 0\}$ if and only if $L
\notin
  \FF$.
\end{lemm}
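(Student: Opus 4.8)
The plan is to prove the equivalent dichotomy $N_T\neq\{0\}\iff L\in\FF$ by converting the trace condition $\psi_x(L,\cdot)=0$ into an \emph{overdetermined} eigenvalue problem for an ordinary differential operator, whose solvability is controlled precisely by the transcendental relations defining $\FF$. First I would pass from the backward system \eqref{3-2} to the forward system \eqref{2.3} via $x\mapsto L-x,\ t\mapsto T-t$; under this change $\psi_x(L,\cdot)=0$ becomes $\va_x(0,\cdot)=0$, so that $N_T$ is isomorphic to
\[
\widetilde N_T=\{\va_0\in L^2(0,L):\ \va=S(\cdot)\va_0 \text{ satisfies } \va_x(0,\cdot)=0 \text{ on }(0,T)\},
\]
and it suffices to decide when $\widetilde N_T$ is trivial.

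The key structural step is to show that $\widetilde N_T$ is a finite-dimensional, $A_1$-invariant subspace of $L^2(0,L)$. Finite-dimensionality I would obtain by a compactness argument in the spirit of Lemma \ref{case3}: solutions issued from $\widetilde N_T$ are smooth, and the unit ball of $\widetilde N_T$ is relatively compact, forcing $\dim\widetilde N_T<\infty$. Invariance follows by differentiating the identically vanishing boundary trace in time: for $\va_0\in\widetilde N_T$ one has $S(t)A_1\va_0=\partial_t\va(\cdot,t)$, hence $\partial_x[S(t)A_1\va_0](0)=\partial_t\big(\va_x(0,t)\big)=0$ on $(0,T)$, so $A_1\va_0\in\widetilde N_T$ (the differentiation being justified by the sharp trace regularity of Theorem \ref{adprop1} together with the smoothing of the semigroup). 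Once this is established, complexifying and noting that $A_1|_{\widetilde N_T}$ acts on a finite-dimensional complex space, I get, whenever $\widetilde N_T\neq\{0\}$, a nonzero eigenfunction: there exist $\lambda\in\CC$ and $\va_0\neq0$ solving $\va_0'''+\va_0'+\lambda\va_0=0$ subject to the domain conditions $\va_0(L)=\va_0'(L)=0$, $\va_0(0)+\va_0''(0)=0$, together with the extra observation condition $\va_0'(0)=0$.

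It then remains to analyze this overdetermined third-order problem. Writing $s_1,s_2,s_3$ for the roots of the characteristic polynomial $s^3+s+\lambda=0$, which satisfy $s_1+s_2+s_3=0$ and $s_1s_2+s_2s_3+s_3s_1=1$, I would set $a=Ls_1,\ b=Ls_2,\ c=Ls_3=-(a+b)$; then $ab+bc+ca=L^2(s_1s_2+s_2s_3+s_3s_1)=L^2$, which is exactly $L^2=-(a^2+ab+b^2)$. Expanding $\va_0=\sum_j c_j e^{s_j x}$ and imposing the four boundary conditions yields a homogeneous $4\times3$ linear system in $(c_1,c_2,c_3)$; the existence of a nontrivial solution is equivalent to the vanishing of the relevant minors, which after simplification using the symmetric relations among the $s_j$ collapses to
\[
\frac{e^{a}}{a^2}=\frac{e^{b}}{b^2}=\frac{e^{c}}{c^2}.
\]
Combined with $L^2=-(a^2+ab+b^2)$, this is precisely membership of $L$ in $\FF$. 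Conversely, if $L\in\FF$, the numbers $a,b,c$ reconstruct a genuine eigenfunction $\va_0$ with $\va_0'(0)=0$, and since $S(t)\va_0=e^{\lambda t}\va_0$ gives $\partial_x[S(t)\va_0](0)=e^{\lambda t}\va_0'(0)=0$, we have $\va_0\in\widetilde N_T\neq\{0\}$. This establishes both implications.

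The main obstacle I anticipate is the determinant computation together with its degenerate configurations: repeated characteristic roots (where $e^{s_jx}$ must be supplemented by $x e^{s_jx}$), the case $\lambda=0$, and values where some $s_j=0$ (so that the denominators $a^2,b^2,c^2$ formally vanish), in each of which one must verify that no nontrivial solution arises unless $L\in\FF$. By contrast, the abstract reduction --- finite-dimensionality and $A_1$-invariance of $\widetilde N_T$ --- is comparatively standard and follows the scheme of Rosier \cite{97rosier}, though it relies crucially on Theorem \ref{adprop1} to legitimize differentiating the boundary trace in time.
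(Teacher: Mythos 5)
Your plan follows essentially the same route as the paper's proof, which is Rosier's argument: complexify $N_T$, use finite-dimensionality and invariance under the generator to extract an eigenvalue $\lambda$ and a nonzero eigenfunction solving an overdetermined third-order boundary value problem, expand it in exponentials, and reduce the resulting homogeneous $4\times 3$ system to the transcendental relations defining $\FF$. The only structural difference is that the paper works directly with the backward boundary conditions of \eqref{3-2} (i.e. $\va_0(0)=\va_0'(0)=0$, $\va_0(L)+\va_0''(L)=0$, plus the extra condition $\va_0'(L)=0$), while you first reflect $x\mapsto L-x$ to the forward system \eqref{2.3}. Your treatment of the abstract reduction and of the converse direction (an eigenfunction gives the separated solution $e^{\lambda t}\va_0$, hence a nonzero element of $N_T$) is sound, and in fact more explicit than the paper's.

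However, one step as you state it would fail, precisely because of the reflection. For your forward problem the two conditions at $x=0$ read $\sum_j c_js_j=0$ and $\sum_j c_j(1+s_j^2)=0$; since $s_j(1+s_j^2)=-\lambda$, the latter is $\sum_j c_j/s_j=0$ (for $\lambda\neq 0$), and solving these two gives $(c_1,c_2,c_3)\propto\bigl(a^2(b-c),\,b^2(c-a),\,c^2(a-b)\bigr)$ in your scaled variables. Substituting into the two conditions at $x=L$ then collapses the minors (for distinct nonzero roots) to $a^2e^{a}=b^2e^{b}=c^2e^{c}$, \emph{not} to $e^{a}/a^2=e^{b}/b^2=e^{c}/c^2$ as you claim; indeed, under your claimed relation the first $x=L$ condition equals a nonzero multiple of $(a-b)(b-c)(c-a)L^2$ and does not vanish. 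This discrepancy is harmless for the final conclusion: the substitution $(a,b,c)\mapsto(-a,-b,-c)$ preserves $a+b+c=0$ and $L^2=-(a^2+ab+b^2)$ and converts one relation into the other --- this is exactly the root-negation induced by the reflection relating \eqref{2.3} to \eqref{3-2} --- so the set of lengths obtained is still $\FF$. But when you carry out the determinant computation you must insert this sign-flip step (or work with the unreflected system, as the paper does); otherwise the relations you derive will appear not to match the definition of $\FF$. Your list of degenerate cases (repeated roots, $\lambda=0$, a vanishing root) is genuine remaining case-work which, for what it is worth, the paper's own computation also glosses over.
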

\noindent {\bf Proof.} The proof uses the same arguments as that given in \cite{97rosier} and will be presented in the Appendix  for
the convenience of the interested readers.
$\blacksquare$

%\begin{lemm}\label{finsys}
% There exists  $ \lambda \in \CC$ and $ \va_0\in H^3(0,L)\setminus\{0\}$  satisfying  \be \label{vrpropio}\begin{cases}
%\lambda \va_0 = - \va_0' - \va_0''',\\
%\va_0(0)=0, \ \va_0'(0)=0, \  \va_0(L)+\va_0''(L)=0, \ \va_0'(L)=0,
%\end{cases}
%\ee
%  if and only if $L\in \FF$. ( As it  is defined in Page \pageref{FF}).
%\end{lemm}
%The following claim asserts the existence of  plenty of lengths $L>0$ for which  system \eqref{akdv} has solution.  Specifically, the set of critical length, where the controllability does not hold for the linear system,   it has  at least one solution  $(a,b)$, furthermore the set of solution is countable.
%\begin{lemm}\label{countableclaim}
% The algebraic system   $$\displaystyle\frac{e^a}{a^2}= \displaystyle\frac{e^b}{b^2} =\displaystyle\frac{e^{-(a+b)}}{(a+b)^2}$$  has at most a countable set of solutions $(a,b)\in \CC^2$.  Moreover,  $\FF$ is a countable non-empty set.
%    \end{lemm}
%    \begin{proof}  See Appendix.
 %   \end{proof}

%\begin{prop}
%    Let $T,L>0$. For any $\va_T\in L^2(0,L)$ the mild solution of \eqref{kdv2} belongs to $B$ and the function $\va_x(.,L)$ makes sense in $L^2(0,T)$.  Furthermore, If $L\notin \FF$, there exists a constant $C=C(L,T)>0$  such that for any $\va_T\in L^2(0,L)$ we have
%    $$\|\va_x(.,L)\|_{L^2(0,T)} \le \|\va_T\|_{L^2(0,L)} \le C\|\va_x(.,L)\|_{L^2(0,T)}.$$
%\end{prop}
%\begin{proof}
%Follows directly by Propositions \ref{case3}  and \ref{bdness}.
%\end{proof}\\

\medskip
Now we turn to prove Proposition \ref{pp}.

\medskip
\noindent {\bf Proof of Proposition  \ref{pp}}. Without loss of
generality, we assume that $y_0=0$. Let $\psi $ be a solution
of the system (\ref{3-2}) and multiply both sides of the
equation in (\ref{3-1}) by $\psi$ and integrate over the domain
$(0,L)\times (0,T)$. Integration
by parts lead to
\[ \int ^L_0 y(x,T)\psi _t (x) dx = \int ^T_0 h_2 (t) \psi _x (L,t)
dt .\]
    Let us denote by   $\Upsilon$ the linear and bounded map from $\lzl \to \lzl $ defined by
$$\Upsilon : \psi_T(\cdot)  \to y(\cdot,T ) $$
 with $y$ being the solution of \eqref{3-1} when
 $h_2(t)=\psi_x(L,t)$ where $\psi$ is the solution of the system (\ref{3-2}). According to Lemma \ref{case3},
\beq \label{ope} (\Upsilon(\psi_T),\psi_T)_{L^2(0,L)}=
\|\psi_x(L,\cdot)\|_{L^2(0,T)}^2\ge C^{-2}
\|\psi_T\|_{L^2(0,L)}^2. \eeq Thus $\Upsilon$ is invertible by
Lax-Milgram Theorem. Consequently, for given $y_T\in L^2 (0,L)$, we can define $\psi _T = \Upsilon ^{-1} y_T$. We solve system (\ref{3-2}) and get $\psi \in X_T$. Then, we set  $h_2 (t) = \psi _x (L,t)$ in
system (\ref{3-1}) and see that the corresponding solution $y\in X_T$
satisfies
\[ y|_{t=0}=0, \quad y|_{t=T}= y_T  .\] The proof is complete.
$\blacksquare$

\medskip

Next  we turn to consider boundary controllability of the linear
system
 \be \label{3-6}
    \begin{cases}
y_t + y_x + y_{xxx}   =0, \quad   (x,t)\in (0,L)\times (0,T),\\
y(0,t)=h_1(t), \quad y_x(L,t)=h_2(t), \quad  y_{xx}(L,t)=0,\quad t\in (0,T),
\end{cases} \ee
 with two   control inputs  $h_1\in H^{\frac13} (0,T)$ and $h_2\in L^2 (0,T)$.
 \begin{prop}\label{two} Let $T>0$ be given. There exists a bounded linear
operator
$$\digamma :  L^2 (0,L)\times L^2 (0,L)\to H^{\frac13} (0,T)\times L^2 (0,T) $$ such that  for any
$y_0,\ y_T \in L^2 (0,L)$, if one chooses \[ (h_1, h_2 )=\digamma
(y_0, y_T),\] then  the system (\ref{3-6}) admits a solution $y\in
X_T$ satisfying
 \[ y|_{t=0}= y_0, \qquad y|_{t=T} =y_T .\]

 \end{prop}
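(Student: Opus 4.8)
The plan is to follow the Hilbert Uniqueness Method exactly as in the proof of Proposition \ref{pp}, the only change being that the second control $h_1$ contributes an additional observation to the adjoint system. First I would carry out the integration by parts: multiplying the equation in \eqref{3-6} by a solution $\psi$ of the adjoint system \eqref{3-2} (which is the \emph{same} adjoint as in the single-input case, since the conditions $\psi(0,t)=\psi_x(0,t)=0$ and $\psi(L,t)+\psi_{xx}(L,t)=0$ are precisely those annihilating the boundary terms that carry the unknown traces of $y$) and integrating over $(0,L)\times(0,T)$, one obtains, after assuming $y_0=0$, the duality identity
\[
\int_0^L y(x,T)\psi_T(x)\,dx = \int_0^T\left( h_2(t)\psi_x(L,t) + h_1(t)\psi_{xx}(0,t)\right)dt.
\]
This identifies $\psi_x(L,\cdot)$ and $\psi_{xx}(0,\cdot)$ as the traces dual to $h_2$ and $h_1$. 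Since $h_1\in H^{\frac13}(0,T)$, the pairing with $\psi_{xx}(0,\cdot)$ is the $H^{\frac13}$--$H^{-\frac13}$ duality, and Theorem \ref{adprop1} guarantees $\psi_{xx}(0,\cdot)\in H^{-\frac13}(0,T)$ and $\psi_x(L,\cdot)\in H^{\frac13}(0,T)\subset L^2(0,T)$, so both integrals make sense. Exact controllability is then equivalent to the observability inequality
\[
\|\psi_T\|_{\lzl} \le C\left( \|\psi_x(L,\cdot)\|_{\lzt} + \|\psi_{xx}(0,\cdot)\|_{H^{-\frac13}(0,T)}\right).
\]

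Second, I would prove this inequality by the compactness--uniqueness argument of Lemma \ref{case3}. Arguing by contradiction, take $\psi_T^n$ with $\|\psi_T^n\|_{\lzl}=1$ and $\|\psi_x^n(L,\cdot)\|_{\lzt}+\|\psi_{xx}^n(0,\cdot)\|_{H^{-\frac13}(0,T)}\to 0$. Proposition \ref{bdness} still applies and bounds $\|\psi_T^n\|^2$ by $\frac1T\|\psi^n\|_{L^2((0,T)\times(0,L))}^2 + \|\psi_x^n(L,\cdot)\|_{\lzt}^2 + \|\psi^n(L,\cdot)\|_{\lzt}^2$: the first term is relatively compact by the Aubin--Lions--Simon lemma \cite{simon} (since $\psi^n$ is bounded in $L^2(0,T;H^1(0,L))$ by Proposition \ref{fistpart} and $\psi_t^n$ in $L^2(0,T;H^{-2}(0,L))$), the second tends to zero, and the third is relatively compact because $\psi^n(L,\cdot)$ is bounded in $H^{\frac13}(0,T)$ (Theorem \ref{adprop1}), which embeds compactly in $L^2(0,T)$. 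Hence $\psi_T^n$ is Cauchy in $\lzl$ and converges to some $\psi_T$ with $\|\psi_T\|_{\lzl}=1$, whose associated solution $\psi$ satisfies both $\psi_x(L,\cdot)=0$ and $\psi_{xx}(0,\cdot)=0$.

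Third --- and this is the step that removes the critical-length restriction --- I would show that these two vanishing traces force $\psi\equiv0$ for \emph{every} $L>0$. Following the reduction of Lemma \ref{next}, nontriviality of the set of such $\psi_T$ would produce, via invariance under the semigroup and the spectral argument of \cite{97rosier}, an eigenfunction $\phi$ solving $-\phi'''-\phi'=\lambda\phi$ together with the adjoint boundary conditions $\phi(0)=\phi'(0)=0$, $\phi(L)+\phi''(L)=0$ and the \emph{two} extra conditions $\phi'(L)=0$ and $\phi''(0)=0$. Whereas in the single-input case only $\phi'(L)=0$ is available, leading to the transcendental conditions defining $\FF$ in \eqref{FF}, here the additional condition $\phi''(0)=0$ combines with $\phi(0)=\phi'(0)=0$ to give $\phi(0)=\phi'(0)=\phi''(0)=0$; by uniqueness for the third-order linear ODE this forces $\phi\equiv0$, independently of $L$. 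Consequently the analogue of $N_T$ is trivial for all $L>0$, the observability inequality holds with no exceptional lengths, and the contradiction is reached.

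Finally, with the observability inequality established, I would run the Lax--Milgram construction of Proposition \ref{pp}. Define the Gramian $\Lambda:\lzl\to\lzl$, $\Lambda\psi_T=y(\cdot,T)$, where $y$ solves \eqref{3-6} with $h_2(t)=\psi_x(L,t)$ and $h_1$ taken to be the Riesz representative in $H^{\frac13}(0,T)$ of $\psi_{xx}(0,\cdot)\in H^{-\frac13}(0,T)$, so that $(\Lambda\psi_T,\psi_T)_{\lzl}=\|\psi_x(L,\cdot)\|_{\lzt}^2+\|\psi_{xx}(0,\cdot)\|_{H^{-\frac13}(0,T)}^2\ge C^{-2}\|\psi_T\|_{\lzl}^2$; thus $\Lambda$ is coercive and invertible. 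Setting $\psi_T=\Lambda^{-1}y_T$ and reading off $(h_1,h_2)$ from the corresponding $\psi$ defines the bounded operator $\digamma$, with $h_1\in H^{\frac13}(0,T)$ and $h_2\in L^2(0,T)$ by the regularity just recorded. I expect the main obstacle to be the functional-analytic bookkeeping around the negative-order trace $\psi_{xx}(0,\cdot)$ --- justifying its limit in $H^{-\frac13}(0,T)$ in the compactness step and setting up the $H^{\frac13}$--$H^{-\frac13}$ duality cleanly in the Lax--Milgram argument; by contrast the unique-continuation step, usually the delicate part, becomes elementary here because the extra observation supplies the third Cauchy datum at $x=0$.
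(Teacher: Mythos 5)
Your proposal is correct and follows essentially the same route as the paper's: the same duality identity, the same observability inequality proved by the compactness--uniqueness scheme of Lemma \ref{case3} (your Riesz representative of $\psi_{xx}(0,\cdot)\in H^{-\frac13}(0,T)$ is exactly the paper's choice $h_1=\Delta_t^{-\frac13}\psi_{xx}(0,\cdot)$ in Lemma \ref{ceA} and Proposition \ref{two}), and the same Lax--Milgram construction of the control operator. The only deviation is the final uniqueness step: where you pass through the spectral reduction of Lemma \ref{next} to an eigenfunction with three vanishing Cauchy data at $x=0$ and conclude by ODE uniqueness, the paper's proof of Lemma \ref{ceA} concludes directly from the unique continuation property of solutions satisfying $\psi(0,t)=\psi_x(0,t)=\psi_{xx}(0,t)=0$ on $(0,T)$ --- both are valid, and both show that no critical-length restriction arises when $h_1$ and $h_2$ are used together.
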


 As before, we first establish the following observability
 estimate for the corresponding adjoint system (\ref{3-2}).

\begin{lemm}\label{ceA}
    Let  $T>0$ be given.  There exists  a constant $C>0$ such that for any $\psi_T\in L^2(0,L)$,
    the corresponding solution $\psi$ of  \eqref{3-2} satisfies
    \be\label{gf}
    \|\psi_T\|_{L^2(0,L)} \le C \left( \izt (|\Delta_t^{-\frac13}\psi_{xx}(0,t)|^2 + | \psi_x(L,t)|^2 )dt \right)
    \ee
    where $\Delta_t:=I-\partial_t^2$.
\end{lemm}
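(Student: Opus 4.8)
The plan is to prove the observability inequality \eqref{gf} by the same compactness--uniqueness scheme used for Lemma~\ref{case3}, the only genuinely new ingredient being a unique continuation statement in which \emph{both} observed quantities vanish. A preliminary duality computation motivates the choice of the two observation terms: multiplying the equation in \eqref{3-6} by a solution $\psi$ of the adjoint system \eqref{3-2} and integrating by parts over $(0,L)\times(0,T)$, all interior contributions and the products $y(L,t)\psi(L,t)$ cancel (using $\psi(0,t)=\psi_x(0,t)=0$ and $\psi(L,t)+\psi_{xx}(L,t)=0$), leaving
\[
\int_0^L y(x,T)\psi_T(x)\,dx-\int_0^L y_0(x)\psi(x,0)\,dx=\int_0^T h_2(t)\psi_x(L,t)\,dt+\int_0^T h_1(t)\psi_{xx}(0,t)\,dt .
\]
Thus $h_2$ is dual to $\psi_x(L,\cdot)$ and $h_1\in H^{\frac13}$ is dual to $\psi_{xx}(0,\cdot)$. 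Since Theorem~\ref{adprop1} only guarantees $\psi_{xx}(0,\cdot)\in H^{-\frac13}(0,T)$, the weight $\Delta_t^{-\frac13}$ is inserted precisely so that $\Delta_t^{-\frac13}\psi_{xx}(0,\cdot)\in H^{\frac13}(0,T)\subset L^2(0,T)$ and the right-hand side of \eqref{gf} is finite.

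First I would argue by contradiction: if \eqref{gf} fails there is a sequence $\psi_T^n$ with $\|\psi_T^n\|_{L^2(0,L)}=1$ while $\|\Delta_t^{-\frac13}\psi_{xx}^n(0,\cdot)\|_{L^2(0,T)}\to0$ and $\|\psi_x^n(L,\cdot)\|_{L^2(0,T)}\to0$. By Proposition~\ref{fistpart} the solutions $\psi^n$ are bounded in $X_T$ (in particular in $L^2(0,T;H^1)$), and by Theorem~\ref{adprop1} the traces $\psi^n(L,\cdot)$ and $\psi_{xx}^n(0,\cdot)$ are bounded in $H^{\frac13}(0,T)$ and $H^{-\frac13}(0,T)$ respectively. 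Using $\psi_t^n=-(\psi_x^n+\psi_{xxx}^n)$ bounded in $L^2(0,T;H^{-2})$ together with the embeddings $H^1\hookrightarrow L^2\hookrightarrow H^{-2}$ (the first compact), the Aubin--Lions--Simon lemma \cite{simon} makes $\{\psi^n\}$ relatively compact in $L^2(0,T;L^2)$, while $H^{\frac13}(0,T)\hookrightarrow L^2(0,T)$ compactly gives a convergent subsequence of $\{\psi^n(L,\cdot)\}$. Applying Proposition~\ref{bdness} (i.e.\ \eqref{z-1}) to the differences $\psi_T^n-\psi_T^m$ and using $\|\psi_x^n(L,\cdot)\|_{L^2}\to0$ then shows $\{\psi_T^n\}$ is Cauchy in $L^2(0,L)$; let $\psi$ be the solution issued from its limit $\psi_T$, so $\|\psi_T\|_{L^2}=1$. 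Passing to the limit in the traces (continuity of $\psi_T\mapsto\psi_x(L,\cdot),\,\psi_{xx}(0,\cdot)$ from Theorem~\ref{adprop1}, plus injectivity of $\Delta_t^{-\frac13}$) yields $\psi_x(L,\cdot)=0$ and $\psi_{xx}(0,\cdot)=0$.

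It then remains to prove the unique continuation property: any solution $\psi$ of \eqref{3-2} with $\psi_x(L,\cdot)=0$ and $\psi_{xx}(0,\cdot)=0$ vanishes identically, for \emph{every} $L>0$. I would establish this exactly as in \cite{97rosier} and Lemma~\ref{next}: the space of such terminal data is finite-dimensional and invariant under the adjoint flow, hence (after complexification) contains an eigenfunction $\phi$ solving $\phi'''+\phi'=\lambda\phi$ for some $\lambda\in\CC$. This $\phi$ inherits the boundary conditions of \eqref{3-2}, namely $\phi(0)=\phi'(0)=0$ and $\phi(L)+\phi''(L)=0$, together with the two conditions coming from the vanishing observations, $\phi'(L)=0$ and $\phi''(0)=0$. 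The crucial difference with the single-control situation is that $\phi(0)=\phi'(0)=\phi''(0)=0$ now constitute \emph{complete} zero Cauchy data at $x=0$, so by uniqueness for the linear third-order ODE $\phi\equiv0$, contradicting that $\phi$ is a nonzero eigenfunction; this renders $\|\psi_T\|_{L^2}=1$ absurd and proves \eqref{gf}. I expect the main technical point to be the justification of this spectral reduction (finite-dimensionality and invariance of the kernel, as in \cite{97rosier}); by contrast, the ODE step is immediate, and it is exactly the overdetermination at $x=0$ that removes the critical-length restriction $\FF$ (see \eqref{FF}) present in Lemma~\ref{case3}, so that \eqref{gf} holds for all $L>0$.
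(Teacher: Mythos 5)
Your proposal is correct, and its first half (the compactness step) is essentially identical to the paper's proof: the paper also argues by contradiction, normalizes $\|\psi_T^n\|_{L^2(0,L)}=1$, and then simply says ``arguing as in the proof of Lemma \ref{case3}'' to get a Cauchy sequence via Proposition \ref{fistpart}, Theorem \ref{adprop1}, Proposition \ref{bdness}, the Aubin--Lions--Simon compactness, and the compact embedding $H^{\frac13}(0,T)\hookrightarrow L^2(0,T)$; your variant of applying \eqref{z-1} to differences $\psi_T^n-\psi_T^m$ is a harmless reorganization. The divergence is in the uniqueness step. The paper disposes of it in one line: the limit solution satisfies $\psi(0,t)=\psi_x(0,t)=\psi_{xx}(0,t)=0$, i.e.\ complete zero Cauchy data on the lateral boundary $x=0$, so $\psi\equiv 0$ by the unique continuation property for the linear KdV equation (justifiable, e.g., by extending $\psi$ by zero across $x=0$ and invoking Holmgren's theorem for this constant-coefficient operator). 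You instead run the Rosier-style spectral reduction: finite-dimensionality and $A$-invariance of the vanishing-observation space produce an eigenfunction $\phi$ of $\lambda\phi=-\phi'-\phi'''$ with $\phi(0)=\phi'(0)=\phi''(0)=0$, $\phi(L)+\phi''(L)=\phi'(L)=0$, which dies by ODE uniqueness. Both routes are sound and both hinge on exactly the overdetermination at $x=0$ that you correctly identify as the reason no critical-length set like $\FF$ appears. What the paper's route buys is brevity, at the price of citing a PDE unique continuation theorem; what yours buys is that the final step is purely an ODE argument, at the price of the spectral machinery (finite dimensionality, invariance, $N_T\subset H^3$ regularity) that you flag but defer to \cite{97rosier} --- this is legitimate, since it is the same machinery this paper itself invokes for Lemma \ref{next}, but it makes your proof longer than the one actually given.
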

\noindent {\bf Proof.}
If the estimate (\ref{gf}) is false,    then there exists  a
sequence $\{\psi^n_T\}_{n\in\NN} \in L^2(0,L)$ with $\| \psi^n_T
\|_{L^2(0,L)}=1$ such that the corresponding solutions $\psi^n$ of
\eqref{3-2} satisfies
 $$1= \|\psi_T^n \|_{L^2(0,L)} > n \left( \izt ( |\Delta_t^{-\frac13}\psi_{xx}^n(0,t)|^2 + | \psi_x^n(L,t)|^2)
 dt \right)$$
 for any $n$. Thus
\be\label{cond1A} \|\Delta_t^{-\frac 1 3} \psi_{xx}^n(0,\cdot )
\|_{\lzt} \to 0 \quad \text{and} \quad \|\psi_{x}^n(L,\cdot )
\|_{\lzt} \to 0  \ee when $n\to \infty$. Arguing  as in the proof of
Lemma \ref{case3} we can conclude that $\{\psi_T^n\}_{n\in\NN}$ is a
Cauchy  sequence in $L^2(0,L)$ converging to some $\psi_T\in
L^2(0,L)$. The corresponding solution
 $\psi$ of (\ref{3-2}) satisfies $\psi_{xx}(0,t)=0$ and $\psi_x(L,t)=0$, i.e.,
\be\label{Aakdv}\left\{\ba{l}
\psi_t + \psi_x +  \psi_{xxx}  =0, \quad (x,t)\in (0,L)\times (0,T),\\
 \psi(0,t)=0, \,  \psi_x(0,t)=0,  \ \psi_{xx}(0,t)=0,\quad t\in (0,T),\\
 \psi(L,t)+\psi _{xx}(L,t)=0, \  \, \psi_{x}(L,t)=0,\quad t\in (0,T),\\
 \psi(x,T)= \psi_T(x),\quad x\in (0,L), \ea \right.\ee
 from which we have  $\psi\equiv 0$  because of the unique continuation property ($\psi(0,t)=\psi_{x}(0,t)=\psi_{xx}(0,t)=0$ for any $t\in (0,T)$).
 In particular, $\psi _T\equiv 0$
 which contradicts the fact that $\|\psi _T\|_{L^2 (0,L)} =1$.
$\blacksquare$

\medskip \noindent {\bf Proof of Proposition \ref{two}}: Without loss of generality,
we assume that $y_0=0$. Let $\psi $ be a solution of the
system (\ref{3-2}) and multiply both sides of the equation in
(\ref{3-6}) by $\psi $ and integrate over the domain $(0,L)\times
(0,T)$. Integration by parts leads
to

 $$
 \izl y(x,T)\psi _T(x) dx = \izt \left( h_1(t) \psi_{xx}(0,t) + h_2(t) \psi_x(L,t)\right
 )dt .$$

Let us denote by   $\Upsilon$ the linear and bounded map from $\lzl
\to \lzl $ defined by
$$\Upsilon : \psi_T(\cdot)  \to y(\cdot,T ) $$  with $y$ being the solution of \eqref{3-6} when
$$h_1(t)=\Delta_t^{-\frac13}\psi_{xx}(0,t)\quad \text{and} \quad
h_2(t)=\psi_x(L,t)$$
 where $\psi$ is the solution of system (\ref{3-2}). Thus
\[ (\Upsilon (\psi _T ), \psi _T)_{L^2 (0,L)} =(\Delta_t^{-\frac13} \psi_{xx}(0,\cdot), \Delta _t^{-\frac13}\psi
_{xx}(0,\cdot)_{L^2 (0,T)} +\| \psi_x (L,\cdot )\|^2_{L^2 (),L)}
\geq C^{-2}\| \psi _T \|^2_{L^2 (0,L)}.\] The proof is then
completed by using the Lax-Milgram Theorem. $\blacksquare$

\medskip
We  now consider boundary controllability of the linear system
 \be \label{3-7}
    \begin{cases}
y_t + y_x + y_{xxx}   =0, \quad   (x,t)\in (0,L)\times (0,T),\\
y(0,t)=0, \quad y_x(L,t)=h_2(t), \quad  y_{xx}(L,t)=h_3 (t),\quad t\in (0,T),

\end{cases} \ee
 with two   control inputs  $h_2\in L^2(0,T)$ and $h_3\in H^{-\frac 1 3}(0,T)$.
  \begin{prop}\label{two-2} Let $T>0$ be given. There exists a bounded linear
operator
$$\digamma _1 :  L^2 (0,L)\times L^2 (0,L)\to L^2 (0,T)\times H^{-\frac13}(0,T) $$ such that  for any
$y_0,\ y_T \in L^2 (0,L)$, if one chooses \[ (h_2, h_3 )=\digamma
_1 (y_0, y_T),\] then  the system (\ref{3-6}) admits a solution
$y\in X_T$ satisfying
 \[ y|_{t=0}= y_0, \qquad y|_{t=T} =y_T .\]

 \end{prop}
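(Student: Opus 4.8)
The plan is to follow the same duality/HUM scheme already used for Proposition~\ref{pp} and Proposition~\ref{two}, since system \eqref{3-7} is controlled from the right end with the two inputs $h_2\in L^2(0,T)$ and $h_3\in H^{-\frac13}(0,T)$, which are precisely the data conjugate to the traces $\psi_x(L,\cdot)$ and $\psi(L,\cdot)$ of the adjoint system \eqref{3-2}. First I would establish the relevant observability inequality: there is a constant $C>0$ such that for every $\psi_T\in L^2(0,L)$ the solution $\psi$ of \eqref{3-2} satisfies
\[
\|\psi_T\|_{L^2(0,L)}\le C\left(\izt |\psi_x(L,t)|^2\,dt+\izt |\Delta_t^{-\frac13}\psi(L,t)|^2\,dt\right),
\]
where $\Delta_t:=I-\partial_t^2$ as in Lemma~\ref{ceA}. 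The $H^{-\frac13}$ weight on the second term matches the regularity $h_3\in H^{-\frac13}(0,T)$, and the pairing $\izt h_3(t)\psi(L,t)\,dt$ is well defined because Theorem~\ref{adprop1} gives $\psi(L,\cdot)\in H^{\frac13}(0,T)$.

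The key steps, in order, are as follows. I would argue the observability estimate by the same compactness–uniqueness contradiction used for Lemmas~\ref{case3} and~\ref{ceA}: assume a normalized sequence $\psi^n_T$ with $\|\psi^n_T\|_{L^2(0,L)}=1$ for which the right-hand side tends to zero, so that $\|\psi^n_x(L,\cdot)\|_{L^2(0,T)}\to 0$ and $\|\Delta_t^{-\frac13}\psi^n(L,\cdot)\|_{L^2(0,T)}\to 0$. Using Proposition~\ref{fistpart}, Theorem~\ref{adprop1}, and the a~priori bound of Proposition~\ref{bdness} (in the form \eqref{z-1}, after the change of variables $x'=L-x,\ t'=T-t$ relating \eqref{10-1} and \eqref{2.3}), I would extract a subsequence converging in $L^2(0,L)$ to some limit $\psi_T$ with $\|\psi_T\|_{L^2(0,L)}=1$, whose associated solution $\psi$ satisfies $\psi_x(L,t)=0$ and $\psi(L,t)=0$ on $(0,T)$. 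Since the boundary condition of \eqref{3-2} reads $\psi(L,t)+\psi_{xx}(L,t)=0$, this forces $\psi(L,t)=\psi_x(L,t)=\psi_{xx}(L,t)=0$, and the unique continuation property for the KdV operator then yields $\psi\equiv 0$, contradicting $\|\psi_T\|_{L^2(0,L)}=1$. With observability in hand, I would close the argument exactly as in Proposition~\ref{two}: multiply the equation in \eqref{3-7} by $\psi$, integrate over $(0,L)\times(0,T)$, and integrate by parts to obtain
\[
\izl y(x,T)\psi_T(x)\,dx=\izt\left(h_2(t)\psi_x(L,t)+h_3(t)\psi(L,t)\right)dt.
\]
Choosing $h_2(t)=\psi_x(L,t)$ and $h_3(t)=\Delta_t^{-\frac13}\psi(L,t)$ defines a bounded linear map $\Upsilon:\psi_T\mapsto y(\cdot,T)$ on $L^2(0,L)$ that is coercive by the observability inequality, hence invertible by the Lax–Milgram Theorem; reducing to $y_0=0$ without loss of generality and setting $(h_2,h_3)=\digamma_1(0,y_T)$ with $\psi_T=\Upsilon^{-1}y_T$ produces the desired control.

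The main obstacle I anticipate is the observability estimate, and specifically the use of the $\Delta_t^{-\frac13}$-weighted trace norm of $\psi(L,\cdot)$ rather than its plain $L^2(0,T)$ norm. This is dictated by the regularity class $h_3\in H^{-\frac13}(0,T)$ required for well-posedness of \eqref{3-7} via Proposition~\ref{allsys}, and it is exactly the new difficulty flagged in the introduction: the adjoint system \eqref{10-1} carries the \emph{extra} boundary trace $\psi(L,\cdot)$ absent in the companion system \eqref{2.2-ad} (compare \eqref{z-1} with \eqref{z-2}). Controlling this term in the compactness argument hinges on the hidden regularity $\psi(L,\cdot)\in H^{\frac13}(0,T)$ from Theorem~\ref{adprop1}, together with the compact embedding $H^{\frac13}(0,T)\hookrightarrow L^2(0,T)$, which is precisely what makes the weighted trace sequence precompact and lets one pass to the limit. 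I would therefore expect to spend most of the effort verifying that the weighted pairing is well defined and that the contradiction argument indeed yields both $\psi_x(L,\cdot)=0$ and $\psi(L,\cdot)=0$; once those two boundary conditions are secured, the unique continuation step and the Lax–Milgram conclusion are routine and parallel the earlier propositions.
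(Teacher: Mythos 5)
Your proposal is correct and takes essentially the same route as the paper: the paper reduces Proposition~\ref{two-2} to a weighted observability inequality for the adjoint system \eqref{3-2} (Lemma~\ref{ceB}, whose proof is declared "similar to that of Lemma~\ref{ceA}" and omitted) and then invokes the same duality/Lax--Milgram step as in Proposition~\ref{two}, which is precisely the compactness--uniqueness argument you spell out (hidden regularity $\psi(L,\cdot)\in H^{\frac13}(0,T)$ giving precompactness of the traces, the boundary condition $\psi(L,t)+\psi_{xx}(L,t)=0$ upgrading $\psi(L,\cdot)=\psi_x(L,\cdot)=0$ to all three traces vanishing, then unique continuation). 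Two bookkeeping remarks: careful integration by parts gives $\izl y(x,T)\psi_T(x)\,dx=\izt h_2(t)\psi_x(L,t)\,dt-\izt h_3(t)\psi(L,t)\,dt$, so coercivity requires $h_3=-\Delta_t^{-\frac13}\psi(L,\cdot)$ rather than $+$; and your weight $\Delta_t^{-\frac13}$ differs from the paper's $\Delta_t^{+\frac13}$ in Lemma~\ref{ceB}, but yours is the internally consistent choice, since $\|\Delta_t^{\frac13}\psi(L,\cdot)\|_{L^2(0,T)}$ is an $H^{\frac23}$-type norm that need not be finite (nor dominated by the duality pairing) for $\psi_T\in L^2(0,L)$.
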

As before, Proposition \ref{two-2} follows from  the following
observability
 estimates for the corresponding adjoint system (\ref{3-2}).

\begin{lemm}\label{ceB}
    Let  $T>0$ be given.  There exists  a constant $C>0$ such that for any $\psi_T\in L^2(0,L)$,
    the corresponding solution $\psi$ of  \eqref{3-2} satisfies
    \be
    \|\psi_T\|_{L^2(0,L)} \le C \left( \izt (|\Delta_t^{\frac13}\psi (L,t)|^2 + | \psi_x(L,t)|^2 )dt .\right)
    \ee
\end{lemm}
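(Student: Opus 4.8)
The plan is to establish Lemma~\ref{ceB} by the same compactness--uniqueness scheme used for Lemmas~\ref{case3} and~\ref{ceA}, adapting it to the two observation terms $\Delta_t^{\frac13}\psi(L,t)$ and $\psi_x(L,t)$. Arguing by contradiction, I would suppose the estimate fails and produce a sequence $\{\psi^n_T\}_{n\in\NN}\subset L^2(0,L)$ with $\|\psi^n_T\|_{L^2(0,L)}=1$ whose corresponding solutions $\psi^n$ of \eqref{3-2} satisfy
\[
\|\Delta_t^{\frac13}\psi^n(L,\cdot)\|_{L^2(0,T)}\to 0
\quad\text{and}\quad
\|\psi^n_x(L,\cdot)\|_{L^2(0,T)}\to 0
\]
as $n\to\infty$. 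The first convergence says $\psi^n(L,\cdot)\to 0$ in $H^{\frac23}(0,T)$, which in particular controls $\psi^n(L,\cdot)$ in $L^2(0,T)$; together with the vanishing of $\psi^n_x(L,\cdot)$, this is exactly the input needed to feed the elementary estimate \eqref{z-1} from Proposition~\ref{bdness}.

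Next I would run the compactness argument verbatim as in Lemma~\ref{case3}. By Proposition~\ref{fistpart} the sequence $\{\psi^n\}$ is bounded in $L^2(0,T;H^1(0,L))$, and since $\psi^n_t=-(\psi^n_x+\psi^n_{xxx})$ is bounded in $L^2(0,T;H^{-2}(0,L))$, the Aubin--Lions--Simon lemma (see \cite{simon}) gives relative compactness of $\{\psi^n\}$ in $L^2(0,T;L^2(0,L))$. Inserting this compactness together with the two vanishing boundary terms into the inequality \eqref{z-1} shows that $\{\psi^n_T\}$ is a Cauchy sequence in $L^2(0,L)$; let $\psi_T$ be its limit and $\psi$ the corresponding solution of \eqref{3-2}. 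By the trace regularity of Theorem~\ref{adprop1} and Proposition~\ref{fistpart}, the limits of the boundary traces pass through, so $\psi$ satisfies both $\psi(L,t)=0$ and $\psi_x(L,t)=0$, while $\|\psi_T\|_{L^2(0,L)}=1$.

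It remains to conclude that $\psi\equiv 0$, which contradicts $\|\psi_T\|_{L^2(0,L)}=1$. Here the boundary condition in \eqref{3-2} is $\psi(L,t)+\psi_{xx}(L,t)=0$; combined with the newly obtained $\psi(L,t)=0$ this forces $\psi_{xx}(L,t)=0$ as well, and we already have $\psi_x(L,t)=0$. Thus $\psi$ satisfies homogeneous Cauchy data at $x=L$, namely $\psi(L,t)=\psi_x(L,t)=\psi_{xx}(L,t)=0$ for all $t\in(0,T)$, which is precisely the situation handled by the unique continuation property invoked in the proof of Lemma~\ref{ceA}. Hence $\psi\equiv 0$ and the contradiction is reached.

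The main obstacle I anticipate is the role of the fractional weight $\Delta_t^{\frac13}$ and making sure the two observation terms together genuinely yield the full Cauchy data at $x=L$. Unlike Lemma~\ref{ceA}, where the interior condition $\psi_{xx}(0,t)=0$ plus the left boundary conditions gave the unique continuation at $x=0$, here everything must be extracted at the right endpoint, and one must verify that $\Delta_t^{\frac13}\psi(L,\cdot)\to 0$ in $L^2(0,T)$ really does control $\psi(L,\cdot)$ in a norm strong enough to survive the limit (the Sobolev gain is favorable, so $H^{-\frac23}$-type losses are harmless, but this should be checked). Provided the relevant unique continuation result for the overdetermined system with vanishing Cauchy data at $x=L$ is available—as it is implicitly in the analogous arguments of \cite{97rosier}—the remaining steps are routine adaptations of the earlier lemmas.
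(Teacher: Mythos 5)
Your proposal is correct and follows exactly the route the paper intends: the paper omits this proof as ``similar to that of Lemma~\ref{ceA}'', and your argument is precisely that compactness--uniqueness scheme (contradiction sequence, Aubin--Lions--Simon compactness fed into the estimate \eqref{z-1}, passage to the limit in the traces). Your key adaptation --- that the vanishing of $\psi(L,\cdot)$ and $\psi_x(L,\cdot)$ combined with the boundary condition $\psi(L,t)+\psi_{xx}(L,t)=0$ yields full homogeneous Cauchy data at $x=L$, so that unique continuation applies from the right endpoint --- is exactly the point that makes the adaptation work, and your remark that $\Delta_t^{\frac13}$ only strengthens the control of $\psi(L,\cdot)$ in $L^2(0,T)$ is likewise correct.
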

\noindent {\bf Proof.} The proof is similar to that of Lemma \ref{ceA} and
is therefore omitted.
$\blacksquare$

\medskip
Finally we turn to consider the linear system associated to
(\ref{kdv2}) using only $g_1\in H^{\frac 1 3}(0,T)$ and $g_3\in L^2(0,T)$ as control inputs, i.e.
\be\label{kdv2-3-3}\left\{\ba{l}
u_t + u_x + u_{xxx}   =0,  \ (x,t)\in (0,L)\times (0,T), \\
u(0,t)=g_1 (t), \, u(L,t)=0, \, u_{x}(L,t)=g_3 (t),\quad t\in (0,T).
 \ea \right.\ee
 The critical length phenomenon will not  occur and system \eqref{kdv2-3-3} is exactly controllable for any $L>0$ as stated in the following result.
  \begin{prop}\label{two-2-2} Let $T>0$ be given. There exists a bounded linear
operator
$$\digamma _2:  L^2 (0,L)\times L^2 (0,L)\to H^{\frac13}(0,T) \times L^2 (0,T)  $$ such that  for any
$u_0,\ u_T \in L^2 (0,L)$, if one chooses \[ (g_1, g_3 )=\digamma
_2(u_0, u_T),\] then  the system (\ref{kdv2-3-3}) admits a
solution $y\in X_T$ satisfying
 \[ u|_{t=0}= u_0, \qquad u|_{t=T} =u_T .\]

 \end{prop}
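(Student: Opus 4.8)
The plan is to prove Proposition \ref{two-2-2} by the same duality scheme used for Propositions \ref{pp}, \ref{two} and \ref{two-2}: reduce the exact controllability of \eqref{kdv2-3-3} to an observability inequality for the adjoint system and then invert a coercive operator through the Lax--Milgram Theorem. Since the homogeneous version of \eqref{kdv2-3-3} (all controls set to zero) carries the boundary conditions $u(0,t)=u(L,t)=u_x(L,t)=0$, its generator is exactly that of \eqref{2.2}, so the relevant adjoint is \eqref{2.2-ad}. Multiplying the equation in \eqref{kdv2-3-3} by a solution $\nu$ of \eqref{2.2-ad} and integrating by parts over $(0,L)\times(0,T)$, all the boundary terms carrying the unprescribed traces $u_x(0,\cdot),u_{xx}(0,\cdot),u_{xx}(L,\cdot)$ drop out by virtue of $\nu(0,t)=\nu_x(0,t)=\nu(L,t)=0$, leaving the identity
\[ \izl u(x,T)\nu_T(x)\,dx = \izl u_0(x)\nu(x,0)\,dx + \izt\big( g_1(t)\nu_{xx}(0,t) + g_3(t)\nu_x(L,t)\big)\,dt. \]
Thus $g_1$ is conjugate to $\nu_{xx}(0,\cdot)$ and $g_3$ to $\nu_x(L,\cdot)$, so that the controllability of \eqref{kdv2-3-3} is equivalent to the observability estimate, the analog of Lemmas \ref{ceA} and \ref{ceB},
\[ \|\nu_T\|_{\lzl}^2 \le C\izt\big( |\Delta_t^{-\frac13}\nu_{xx}(0,t)|^2 + |\nu_x(L,t)|^2\big)\,dt. \]

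Next I would establish this observability inequality by the standard compactness--uniqueness argument. Arguing by contradiction, suppose it fails; then there is a sequence $\{\nu_T^n\}_{n\in\NN}$ with $\|\nu_T^n\|_{\lzl}=1$ whose corresponding solutions satisfy $\|\Delta_t^{-\frac13}\nu_{xx}^n(0,\cdot)\|_{\lzt}\to0$ and $\|\nu_x^n(L,\cdot)\|_{\lzt}\to0$. The a priori estimate \eqref{z-2} for the adjoint of \eqref{2.2}, combined with the relative compactness of $\{\nu^n\}$ in $L^2((0,T)\times(0,L))$ (obtained exactly as in the proof of Lemma \ref{case3}, from the bound on $\nu^n_t$ in $L^2(0,T;H^{-2}(0,L))$ and Simon's compactness lemma \cite{simon}), shows that $\{\nu_T^n\}$ is Cauchy in $\lzl$. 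Note that, in contrast with \eqref{z-1}, the estimate \eqref{z-2} contains no extra $\|\nu(L,\cdot)\|_{\lzt}^2$ term, so no further compactness is required and the argument is cleaner than in the KdV case \eqref{kdv}. Passing to the limit and using the sharp trace regularity of Proposition \ref{regular} to carry the boundary traces through, the limit $\nu_T$ has unit norm while its solution $\nu$ obeys $\nu_{xx}(0,\cdot)=0$ and $\nu_x(L,\cdot)=0$.

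The decisive point --- and the reason the critical length phenomenon does not occur when two inputs are used --- is that the limit $\nu$ now carries the \emph{full Cauchy data} $\nu(0,t)=\nu_x(0,t)=\nu_{xx}(0,t)=0$ at the endpoint $x=0$, the first two coming from the boundary conditions of \eqref{2.2-ad} and the third from the vanishing observation. The unique continuation property for the KdV operator (with its analytic, indeed constant, coefficients) then forces $\nu\equiv0$, whence $\nu_T\equiv0$, contradicting $\|\nu_T\|_{\lzl}=1$. Unlike the single-input problems behind Rosier's and Glass--Guerrero's theorems, here no eigenvalue analysis \`a la \cite{97rosier} and no condition on $L$ intervenes, so the conclusion holds for every $L>0$.

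With the observability inequality in hand I would conclude exactly as in Propositions \ref{two} and \ref{two-2}. Assuming $u_0=0$ without loss of generality, define the bounded linear map $\Upsilon:\lzl\to\lzl$, $\Upsilon:\nu_T\mapsto u(\cdot,T)$, where $u$ solves \eqref{kdv2-3-3} with $g_1(t)=\Delta_t^{-\frac13}\nu_{xx}(0,t)$ and $g_3(t)=\nu_x(L,t)$. The identity above yields the coercivity $(\Upsilon\nu_T,\nu_T)_{\lzl}=\|\Delta_t^{-\frac13}\nu_{xx}(0,\cdot)\|_{\lzt}^2+\|\nu_x(L,\cdot)\|_{\lzt}^2\ge C^{-2}\|\nu_T\|_{\lzl}^2$, so $\Upsilon$ is invertible by the Lax--Milgram Theorem; setting $\nu_T=\Upsilon^{-1}u_T$ and $\digamma_2(u_0,u_T)=(g_1,g_3)$ produces the desired bounded control operator. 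The sharp trace regularity $\nu_{xx}(0,\cdot)\in H^{-\frac13}(0,T)$ and $\nu_x(L,\cdot)\in L^2(0,T)$, together with the order-$\tfrac23$ smoothing of $\Delta_t^{-\frac13}$, places $g_1\in H^{\frac13}(0,T)$ and $g_3\in L^2(0,T)$, as claimed. The main obstacle is the observability inequality; within it the genuinely technical points are the compactness extraction and the correct Sobolev indexing of the boundary traces, whereas the unique continuation --- the crux in the single-control problems --- is here immediate thanks to the full Cauchy data at $x=0$.
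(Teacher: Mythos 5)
Your proposal is correct and follows essentially the same route the paper intends: the paper dismisses this proof as ``similar to that of Proposition \ref{two-2}'', i.e.\ the duality--observability--Lax--Milgram scheme of Propositions \ref{two}, \ref{two-2} and Lemma \ref{ceA}, which is exactly what you carry out for the adjoint system \eqref{2.2-ad}, including the correct observed quantities $\nu_{xx}(0,\cdot)$ and $\nu_x(L,\cdot)$, the compactness--uniqueness argument via \eqref{z-2}, and the unique continuation from full zero Cauchy data at $x=0$. Your remark that \eqref{z-2} lacks the extra trace term of \eqref{z-1}, so the compactness step is simpler here than in Lemma \ref{case3}, is also accurate.
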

 \noindent {\bf Proof.}  The proof is similar to that of Proposition
 \ref{two-2} and is therefore skipped.
$\blacksquare$

%%%%%%%%%%%%%%%%%%%%%%%%%%%%%%%%%%%%%%%%%%%%%%%%%%%%%%%%%%%%%%%%%%%%%%%%%%%%%
\section{Nonlinear control systems}\label{sec-non}

\setcounter{equation}{0}
 In this section we first   consider the
nonlinear system
\be \label{4-1}
\begin{cases}
y_x+y_{xxx}+y_{x} +yy_x=0, \quad x\in (0,L), \; t\in (0,T),\\
y(0,t)=0, \; y_x(L,t)=h_2 (t), \; y_{xx}(L,t)=0, \quad t\in (0,T),\\
y(x,0)=y_0 (x),\quad x\in (0,L),
\end{cases}
\ee
 and present the proof of Theorem \ref{theo-h-2}.

 \medskip
\noindent {\bf Proof of Theorem \ref{theo-h-2}.} Rewrite the
system (\ref{4-1}) in its integral form \beq\label{opfp}
 y(t)= W_0(t)y_0 + W_{bdr}(t)h_2 - \int_0^t W_0(t-\tau) (yy_x)(\tau) d\tau .
\eeq Here we have written $W_{bdr}(t)(0,h_2,0) $ as $W_{bdr}(t)h_2$
for simplicity. For  any $v\in X_T$, let us set
$$ \nu (T,v):= \izt W_0(T-\tau)(vv_x)(\tau) d\tau .$$
 For any $y_0, \ y_T\in
L^2(0,L)$, we use Proposition \ref{pp} to define
$$h_2 = \Psi (y_0,y_T + \nu (T,v)).$$  Then
$$v(t)=  W_0(t)y_0  + W_{bdr}\Psi(y_0,y_T + \nu(T,v))  - \int_0^t W_0(t-\tau) (vv_x)(\tau) d\tau$$
satisfies
\[ v|_{t=0}= y_0,\qquad
v|_{t=T} =y_T + \nu (T,v) -\nu (T,v)= y_T .\] This leads us to
consider the map
$$\Gamma (v)=  W_0(t)y_0  + W_{bdr}\Psi(y_0,y_T + \nu(T,v))  - \int_0^t W_0(t-\tau) (vv_x)(\tau,x) d\tau .$$
If we can show that the map $\Gamma $ is a contraction in an
appropriate metric  space, then its fixed point $v$ is a solution
of (\ref{4-1}) with $h_2 = \Psi (y_0, y_T+\nu (T; v))$ which
satisfies
\[ v|_{t=0}= y_0,\qquad
 v|_{t=T} =y_T  .\] Next we show that  this is indeed the case;
the map $\Gamma$ is a contraction map in the ball
\[ B_r=\{ z\in X_T; \  \| z\|_{ X_T}\leq r\} \]
for an appropriately chosen $r$.  According to Proposition \ref{allsys}, there
exists a constant
 $C_1>0$ such
that
\[ \|\Gamma (v)\|_{X_T} \leq C_1 \left (\| y_0\|_{L^2 (0,L)}
+\| \Psi (y_0,y_T+ \nu ( T,v))\|_{L^2 (0,L)} + \int ^T_0
\|vv_x\|_{L^2(0,L)}(t)  dt \right ).\]
 Since \[ \| \Psi (y_0, y_T+ \nu (T,v))\|_{L^2 (0,L)}\leq C_2 \left
( \|y_0\|_{L^2 (0,L)} +\|y_T\|_{L^2 (0,L)} +\|\nu (T,v)\|_{L^2
(0,L)} \right ),
\]
\[ \|\nu (T,v)\|_{L^2 (0,L)}\leq \int ^T_0 \| W_0(T-\tau ) vv_x \|_{L^2
(0,L)} d\tau \leq \int ^T_0 \| vv_x\|_{L^2 (0,L)} (t) dt, \] and the bilinear estimate
\[ \int ^T_0 \|vv_x\|_{L^2 (0,L)}(t)  dt\leq C_3 \|v\|_{X_T}^2 \]
we arrive at
\[ \|\Gamma (v)\|_{X_T} \leq C_3 (\|y_0\|_{L^2 (0,L)} +\|y_T\|_{L^2
(0,L)} ) + C_4 \|v\|_{X_T}^2 \] for any $v\in X_T$ where $C_3 $
and $C_4 $ are constants depending only on $T$. By choosing $r,\delta$ such that
\be
\label{contract} r=2C_3 \delta, \qquad  4C_3C_4\delta < \frac12
\ee we get
\[ \|\Gamma (v)\|_{X_T} \leq C_3\delta +4C_4 C_3\delta  C_3 \delta
\leq 2 C_3 \delta \leq r \] for any $v\in B_r$. In addition, for
$v_1, \ v_2 \in B_r$,
\[
 \Gamma (v_1)-\Gamma (v_2) =W_{bdr}\Psi \left (0, \nu (T, v_1)-\nu (T, v_2)\right ) +\frac12\int
^t_0 W_0(t-\tau) \left [(v_1+v_2)(v_1-v_2)_x \right ](\tau ) d\tau
\] and
\begin{eqnarray*}
\| \Gamma (v_1)-\Gamma (v_2)\|_{X_T}  &\leq  & \left \{ C_4
(\|v_1\|_{X_T} +\|v_2\|_{X_T} )  +C_4 (\| v_1\|_{X_T} +\|v
_2\|_{X_T}) \right \} \| v_1-v_2\|_{X_T} \\  &\leq & 8C_3C_4
\delta \| v_1-v_2\|_{X_T}\\ &\leq  & \alpha \| v_1-v_2 \|_{X_T}
\end{eqnarray*}
 with $\alpha = 8C_3 C_4 \delta <1 .$ The proof is  completed. $\blacksquare$

 \medskip
 As Theorem \ref{theo-h-1-2} and  Theorem \ref{theo-h-2-3} can be
 proved using the same arguments as those in the proof of Theorem
 \ref{theo-h-2}, their proofs will be skipped.

 Now we
 turn to consider the system
 \be \label{4-11}
\begin{cases}
y_x+y_{xxx}+y_{x} +yy_x=0, \quad x\in (0,L), \; t\in (0,T),\\
y(0,t)=0, \; y_x(L,t)=0, \; y_{xx}(L,t)=h_3(t),\quad t\in (0,T)\\
\end{cases}
\ee to prove Theorem \ref{theo-h-3}.

 \medskip
 \noindent
 {\bf Proof of Theorem \ref{theo-h-3}.} We first consider the
 system
 \be\label{6.1}\left\{\ba{l}
u_t + u_x + u_{xxx} + u u_x =0,  \ (x,t)\in (0,L)\times (0,T), \\
u(0,t)=0, \, u(L,t)=g_2(t), \, u_{x}(L,t)=0, \quad t\in (0,T),
 \ea \right.\ee
and show that the following controllability result holds, which is
an improvement of Theorem C due to Glass and Guerrero \cite{09glass-guerrero}.

\medskip
\begin{prop}\label{6:1} Let $T>0$ and $L\notin {\cal N}$ be given. There
exists  $r>0$ such that for any $u_0, \ u_T \in L^2 (0,L)$ with
\[ \|u_0 \|_{L^2 (0,L)}+\|u_T \|_{L^2 (0,L)} \leq r,\] there
exists  $g_2\in H^{\frac13} (0,T) $ such that the system
(\ref{6.1}) admits a solution
\[ u\in C([0,T]; L^2 (0,L))\cap L^2 (0,T; H^1 (0,L))\]
satisfying
\[ u|_{t=0}= u_0, \qquad u|_{t=T} =u_T.\]
\end{prop}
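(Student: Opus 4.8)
The plan is to reproduce the two-stage scheme used for Theorem \ref{theo-h-2}: first solve the corresponding linear control problem with a control operator whose range lies in $H^{\frac13}(0,T)$, and then close the nonlinear problem by the contraction mapping principle in the space $X_T$. The linearization of (\ref{6.1}) is the IBVP (\ref{2.2}) with $g_1=g_3=0$, $f=0$, whose backward adjoint is (\ref{2.2-ad}). Multiplying the linearized equation by a solution $\nu$ of (\ref{2.2-ad}) and integrating by parts over $(0,L)\times(0,T)$ gives
\[ \izl u(x,T)\nu_T(x)\,dx-\izl u_0(x)\nu(x,0)\,dx+\izt g_2(t)\,\nu_{xx}(L,t)\,dt=0, \]
so that the control $g_2$ is dual to the trace $\nu_{xx}(L,\cdot)$ of the adjoint state, which by the sharp trace regularity of Proposition \ref{regular} belongs to $H^{-\frac13}(0,T)$.

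The linear step rests on the observability inequality
\[ \|\nu_T\|_{L^2(0,L)}\le C\,\|\nu_{xx}(L,\cdot)\|_{H^{-\frac13}(0,T)},\qquad L\notin\mathcal{N}. \]
Proposition \ref{regular} supplies the matching upper bound $\|\nu_{xx}(L,\cdot)\|_{H^{-\frac13}}\le C\|\nu_T\|_{L^2}$, so the content is the reverse inequality. I would obtain it by adapting the critical-length analysis of Glass and Guerrero \cite{09glass-guerrero}: a compactness--uniqueness argument reduces the failure of the estimate to the existence of a nonzero adjoint state satisfying, besides the boundary conditions of (\ref{2.2-ad}), the overdetermination $\nu_{xx}(L,\cdot)\equiv 0$; passing to eigenfunctions and solving the resulting characteristic equation shows that such a state exists exactly when $L\in\mathcal{N}$, which is excluded. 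What is new relative to Theorem C is that the sharpness of Proposition \ref{regular} lets the observation be measured in the dual norm $H^{-\frac13}$, and hence the control land in $H^{\frac13}$: taking $g_2=\Delta_t^{-\frac13}\nu_{xx}(L,\cdot)$ gives $\izt g_2\,\nu_{xx}(L,t)\,dt=\|\nu_{xx}(L,\cdot)\|_{H^{-\frac13}}^2\ge C^{-1}\|\nu_T\|_{L^2}^2$ and $\|g_2\|_{H^{\frac13}}=\|\nu_{xx}(L,\cdot)\|_{H^{-\frac13}}$. The Lax--Milgram theorem then produces a bounded linear operator $\Psi_2:L^2(0,L)\times L^2(0,L)\to H^{\frac13}(0,T)$ steering the linearization from any $u_0$ to any $u_T$.

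With $\Psi_2$ available, the nonlinear step is carried out exactly as in the proof of Theorem \ref{theo-h-2}. Writing (\ref{6.1}) in its Duhamel form with $W_0(t)$ and $W_{bdr}(t)$ now denoting the free and boundary solution operators of (\ref{2.2}), setting $\nu(T,v):=\izt W_0(T-\tau)(vv_x)(\tau)\,d\tau$, choosing $g_2=\Psi_2(u_0,\,u_T+\nu(T,v))$, and defining
\[ \Gamma(v)=W_0(t)u_0+W_{bdr}(t)\,\Psi_2\big(u_0,\,u_T+\nu(T,v)\big)-\int_0^t W_0(t-\tau)(vv_x)(\tau)\,d\tau, \]
the well-posedness bound of Proposition \ref{regular} (which controls $\|u\|_{X_T}$ by the data, including the $H^{\frac13}$-norm of $g_2$), the boundedness of $\Psi_2$, and the bilinear estimate $\izt\|vv_x\|_{L^2(0,L)}\,dt\le C\|v\|_{X_T}^2$ yield $\|\Gamma(v)\|_{X_T}\le C_3\delta+C_4\|v\|_{X_T}^2$ together with a matching Lipschitz bound for $\Gamma(v_1)-\Gamma(v_2)$. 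Choosing $r$ and $\delta$ as in (\ref{contract}) makes $\Gamma$ a contraction on the ball $B_r\subset X_T$, and its fixed point is the desired solution satisfying $u|_{t=0}=u_0$, $u|_{t=T}=u_T$.

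The main obstacle is the sharp observability inequality in the dual norm $H^{-\frac13}$: improving Glass and Guerrero's $H^{\frac16-\epsilon}$ control to $H^{\frac13}$ amounts to showing that $\|\nu_{xx}(L,\cdot)\|_{H^{-\frac13}}$ already dominates $\|\nu_T\|_{L^2}$, which forces one to combine the sharp Kato smoothing of Proposition \ref{regular} with the critical-length analysis fixing the exceptional set $\mathcal{N}$. The delicate point within this argument is the compactness of the normalized sequence, because the trace entering the energy identity (\ref{z-2}) is $\nu_x(L,\cdot)$ rather than the observed trace $\nu_{xx}(L,\cdot)$; once $\Psi_2$ with the improved mapping property is secured, the contraction argument of the nonlinear step is entirely routine.
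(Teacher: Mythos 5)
Your proposal takes a genuinely different route from the paper, and that route contains a real gap at its decisive step. For contrast: the paper proves Proposition \ref{6:1} without any new observability inequality and without any new fixed-point argument. It takes Theorem C of Glass and Guerrero \cite{09glass-guerrero} as a black box, which already supplies a control $g_2\in H^{\frac16-\epsilon}(0,T)$ and a solution $u\in X_T$ of (\ref{6.1}) joining $u_0$ to $u_T$. The entire content of the paper's proof is an a posteriori upgrade of the regularity of this \emph{same} $g_2$: write $u=\kappa+\mu$, where $\kappa$ solves $\kappa_t+\kappa_{xxx}=f$ with $f=-u_x-uu_x\in L^1(0,T;L^2(0,L))$, initial data $u_0$ and homogeneous boundary conditions, so that $\kappa\in X_T$; hence $\mu=u-\kappa\in X_T$, where $\mu$ solves (\ref{iff}) with Dirichlet data $g_2$. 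Lemma \ref{iff-1} (proved in the Appendix by explicit oscillatory-integral representations and Plancherel) says $\mu\in X_T$ if and only if $g_2\in H^{\frac13}(0,T)$, which forces $g_2\in H^{\frac13}(0,T)$ and ends the proof; the steering of $u_0$ to $u_T$ is inherited from Theorem C.

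The gap in your approach sits exactly at the point you label ``delicate'' and then leave unresolved: the inequality $\|\nu_T\|_{L^2(0,L)}\le C\|\nu_{xx}(L,\cdot)\|_{H^{-\frac13}(0,T)}$ cannot be reached by the compactness--uniqueness scheme you sketch. In the energy identity (\ref{z-2}) for the adjoint system (\ref{2.2-ad}) the boundary term is $\|\nu_x(L,\cdot)\|_{L^2(0,T)}^2$, which is not the observed quantity. In the paper's own compactness--uniqueness arguments (Lemma \ref{case3}, Lemma \ref{ceA}), every right-hand-side term of the energy identity is either the observation (so it tends to zero), or interior (so it is compact by Aubin--Lions), or a trace with a strict Sobolev gain from the sharp Kato smoothing ($\psi(L,\cdot)$ bounded in $H^{\frac13}(0,T)$, hence compact in $L^2(0,T)$). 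Here the offending trace is a first derivative: the sharp trace regularity gives $\nu_x^n(L,\cdot)$ bounded only in $H^{\frac{1-1}{3}}(0,T)=L^2(0,T)$, with no gain, so this sequence has no compactness in $L^2(0,T)$ and you cannot conclude that $\{\nu_T^n\}$ is Cauchy; the weak-limit variant fails as well, because (\ref{z-2}) bounds $\|\nu_T^n\|$ \emph{by} $\|\nu_x^n(L,\cdot)\|$, which may remain of order one while the interior term and the observation vanish, producing no contradiction. Nor can the estimate simply be imported from \cite{09glass-guerrero}: their observability corresponds by duality to $H^{\frac16-\epsilon}$ controls, so your $H^{-\frac13}$ inequality is strictly stronger than what they prove, and establishing it is precisely the difficulty that the paper's decomposition argument is designed to bypass. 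Granting the linear operator $\Psi_2$, your nonlinear contraction step would indeed go through (it mirrors the proof of Theorem \ref{theo-h-2}, using Proposition \ref{regular}), but as written the linear step is not proved, whereas the paper settles the whole proposition with no observability analysis at all.
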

If Proposition \ref{6:1} holds,  for given $y_0, \ y_T\in L^2
(0,L)$, set
\[ u_0=y_0, \quad u_T=y_T. \]
Then by Proposition \ref{6:1}, there exists $g_2\in
H^{\frac13}(0,T)$ such that  (\ref{6.1}) admits a unique solution
$u\in X_T$ satisfying
\[ u|_{t=0}= y_0, \qquad u|_{t=T} =y_T.\] Thus $y(x,t):=u(x,t)$
will be a desired solution of (\ref{4-11}) with $h_3 (t)=
u_{xx}(L,t)$ satisfying
\[ y|_{t=0}=y_0, \quad y|_{t=T}=y_T. \]
As $u\in X_T$ solves
\[
\left \{\begin{array}{l}  u_t+u_x +u_{xxx}=f, \quad u(x,0)=y_0,
\quad (x,y)\in (0,L)\times (0,T), \\ u(0,t)=0, \quad u(L,t)=
g_2(t), \quad u_x (L,t)=0,\quad t\in (0,T),  \end{array} \right. \] with $g_2\in
H^{\frac13}(0,T)$ and $f=-uu_x \in L^1 (0,T; L^2 (0,L))$, it
follows from Proposition \ref{regular} that  $u_{xx} (L, \cdot)\in
H^{-\frac13}(0,T)$. Thus, it suffices to prove Proposition
\ref{6:1} to complete the proof of Theorem \ref{theo-h-3}.

\medskip
To this end, note that, according to Theorem C,  $g_2\in
H^{\frac16-\epsilon } (0,T)$. We just need to prove that this
$g_2$ given by Theorem C belongs, in fact,  to the space $
H^{\frac13} (0,T)$. Indeed, the solution $u\in X_T$ given in
Theorem C can be written as
\[ u= \kappa + \mu  \]
where $\kappa$ solves
\[ \left \{ \begin{array}{l} \kappa_t +\kappa _{xxx}=f, \quad \kappa
(x,0)=u_0 , \ (x,t)\in (0,L)\times (0,T), \\ \kappa (0,t)=\kappa
(L,t) =\kappa _x (L,t)=0, \quad t\in (0,T),  \end{array} \right. \] with
$f=-u_x -uu_x$, and  $\mu$ solves \be
\label{iff} \left \{
\begin{array}{l} \mu_t +\mu _{xxx}=0, \quad \mu (x,0)=0 , \
(x,t)\in (0,L)\times (0,T), \\ \mu (0,t)=0, \quad \mu (L,t)
=g_2(t), \quad  \mu  _x (L,t)=0, \quad t\in (0,T). \end{array} \right. \ee   As $u_0\in L^2 (0,L)$ and  $f=-u_x -uu_x \in L^1
(0,T; L^2 (0,L))$ (because $u\in X_T$), we have $\kappa \in X_T$ by
Proposition \ref{regular}. In addition, the following lemma (whose proof will be presented in the Appendix) holds for system (\ref{iff}).

\begin{lemm}\label{iff-1} The solution $\mu$ of  (\ref{iff}) belongs to $X_T$  if
and only if $g_2$ belongs to $H^{\frac13} (0,T)$.
\end{lemm}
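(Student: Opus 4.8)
My plan is to establish the equivalence by proving the two implications separately, since they are of genuinely different difficulty. The sufficiency — that $g_2\in H^{\frac13}(0,T)$ forces $\mu\in X_T$ — is essentially the linear well-posedness already available to us. Rewriting the equation in \eqref{iff} as $\mu_t+\mu_x+\mu_{xxx}=\mu_x$, the system \eqref{iff} becomes the special case of \eqref{2.2} with $u_0=0$, $g_1=g_3=0$, boundary datum $g_2$, and forcing $f=\mu_x$. I would treat $\mu_x$ as a source term and run the same short-time contraction as in the proof of Theorem \ref{adprop1}: since $\|\mu_x\|_{L^1(0,\beta;L^2(0,L))}\le\beta^{\frac12}\|\mu\|_{X_\beta}$ carries a small factor, Proposition \ref{regular} produces a unique $\mu\in X_T$ depending continuously on $g_2$ (this sufficiency is also contained in \cite{zhang-1,bsz-finite}). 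The real content is therefore the necessity.

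For necessity I would follow \cite{zhang-1,bsz-finite} and derive an explicit representation of $\mu$ in terms of $g_2$, reading off the sharp regularity from the associated Fourier multiplier. Extending $g_2$ by zero to $t<0$ and setting $\hat\mu(x,s)=\int_0^\infty e^{-st}\mu(x,t)\,dt$, the problem \eqref{iff} reduces to the $x$-ODE $s\hat\mu+\hat\mu_{xxx}=0$ subject to $\hat\mu(0,s)=0$, $\hat\mu(L,s)=\hat g_2(s)$, $\hat\mu_x(L,s)=0$. Writing $\hat\mu(x,s)=\sum_{j=1}^{3}a_j(s)e^{\lambda_j(s)x}$ with $\lambda_j(s)$ the three cube roots of $-s$, the three boundary conditions form a $3\times3$ linear system with determinant $\Delta(s)$, and its solution yields a closed multiplier $\hat\mu(x,s)=m(x,s)\hat g_2(s)$ normalized by $m(0,s)=0$, $m(L,s)=1$, $m_x(L,s)=0$.

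The whole lemma then reduces to the two-sided bound $\|\mu\|_{X_T}\simeq\|g_2\|_{H^{\frac13}(0,T)}$, which I would prove on the Fourier line $s=i\beta$ by Plancherel in $t$. For $|\beta|$ large the three roots share the modulus $|\beta|^{\frac13}$ but have arguments that split them into one growing, one decaying, and one purely oscillatory mode $e^{\pm i|\beta|^{\frac13}x}$. The crucial point is that the oscillatory mode does not decay across $(0,L)$, so its coefficient stays $O(1)$ while its spatial derivative is of size $|\beta|^{\frac13}$; this forces $\|m_x(\cdot,i\beta)\|_{L^2(0,L)}^2\sim|\beta|^{\frac23}$. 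Since $\|\mu\|_{L^2(0,T;H^1(0,L))}^2=\int_{\R}\|m_x(\cdot,i\beta)\|_{L^2(0,L)}^2\,|\hat g_2(i\beta)|^2\,d\beta$ up to lower-order terms, integrating in $\beta$ returns exactly $\|g_2\|_{H^{\frac13}}^2$, giving the lower bound $\|g_2\|_{H^{\frac13}}\le C\|\mu\|_{X_T}$ and hence necessity; the matching upper bound re-derives sufficiency. It is precisely this $|\beta|^{\frac13}$ scaling that explains why $H^{\frac13}$, and not the weaker $H^{\frac16-\epsilon}$ of Theorem C, is the sharp space.

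The main obstacle will be the analysis of the determinant $\Delta(s)$ and its zeros, the resonances of the problem. Near such a zero the coefficients $a_j(s)$, and with them $m(x,s)$, develop apparent poles, and along $s=i\beta$ one must verify that they spoil neither the upper nor the lower bound — in particular that the oscillatory coefficient cannot become small on a whole range of frequencies, so that the lower bound and hence necessity hold for \emph{every} $L>0$. This is in sharp contrast with the controllability theorems, where exactly these resonances generate the critical sets $\FF$ and $\mathcal{N}$. I expect the decisive ingredient to be combining the Paley--Wiener analyticity of $\hat g_2$ (coming from $\mathrm{supp}\,g_2\subset[0,T]$), which lets the inversion contour be shifted off the troublesome zeros, with sharp lower bounds on $|\Delta(i\beta)|$ as $|\beta|\to\infty$. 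Securing these uniform bounds on $m$ and $m_x$ is the technical heart of the argument; the remaining steps are routine Fourier-multiplier bookkeeping.
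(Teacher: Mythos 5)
Your proposal follows essentially the same route as the paper: the paper likewise writes $\mu$ via the Laplace-transform representation as a sum of three modes $\mu_1+\mu_2+\mu_3$ (one purely oscillatory, one growing, one decaying), applies Plancherel in $t$, and reads off that the oscillatory mode satisfies $\|\partial_x\mu_1\|^2_{L^2_x L^2_t}=c\|g_2\|^2_{H^{1/3}}$ while the growing/decaying modes only require (and only detect) $g_2\in H^{1/6}(0,T)$, so that $\mu\in X_T$ if and only if $g_2\in H^{1/3}(0,T)$. The only packaging differences are that the paper gets sufficiency from the same representation rather than your contraction with $f=\mu_x$, and that its mode-by-mode exact norm computation (oscillatory mode exactly $H^{1/3}$, the other two exactly $H^{1/6}$, hence strictly lower order) is what disposes of the cancellation and resonance issues you flag, the needed multiplier asymptotics $S_1\sim 1$, $S_2\sim e^{-\frac{\sqrt{3}}{2}L\rho}$, $S_3\sim 1$ being quoted from \cite{03bona-sun-zhang}.
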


Using this Lemma, we get $u\in X_T$ if and only if $g_2\in H^{\frac13} (0,T)$.
The proof of
Theorem \ref{theo-h-3} is  complete. $\blacksquare$

\medskip
Finally we consider the system \be \label{4-7}
\begin{cases}
y_x+y_{xxx}+y_{x} +yy_x=0, \quad x\in (0,L), \; t\in (0,T),\\
y(0,t)=h_1(t), \; y_x(L,t)=h_2 (t), \; y_{xx}(L,t)=h_3 (t),\quad t \in (0,T),\\
y(x,0)=y_0 (x),\quad x \in (0,L),
\end{cases}
\ee and present the proof of Theorem \ref{theo-h-1-2-3}.

\medskip
\noindent {\bf Proof of Theorem \ref{theo-h-1-2-3}.} Consider
first the following initial value control problem for the KdV
equation posed on the whole line $\R$ \be \label{line}
\begin{cases} z_t+z_x+zz_x+z_{xxx}=0, \quad x, \ t\in \R, \\
z(x,0)= h(x) \end{cases} \ee where the initial value $h(x)$ is
considered as a control input. The following result is due to Zhang
\cite{Za}.

\medskip
\noindent {\bf Theorem G.}  Let $s\geq 0$ and $T>0$ be given and
suppose $w\in C(\R; H^{\infty} (\R))$ is a given solution of
\[ w_t+w_x+ww_x+w_{xxx}=0, \quad x, \, t\in \R .\]
There exists  $\delta >0$ such that for any $y_0, \ y_T \in H^s
(0,L)$ with \be \label{line-1} \|y_0-w(\cdot, 0)\|_{H^s (0,L)}\leq
\delta, \qquad \|y_T-w(\cdot, T)\|_{H^s (0, L)} \leq \delta ,\ee one
can find a control input $h\in H^s (\R)$ which is an external modification of $y_0$ such that (\ref{line})
admits a solution $z\in C(\R; H^s(\R))$ satisfying
\[ z(x,0)=y_0(x), \qquad z(x,T)=y_T(x) \quad for \ any \ x\in
(0,L) .\]

\medskip
Let $u$ be as in Theorem \ref{theo-h-1-2-3}. Applying Theorem G with
$w=u$ and $s=0$, we get the existence of $\delta>0$ such that for
any $y_0, \ y_T \in L^2  (0,L)$ with \be \label{line-6}
\|y_0-u(\cdot, 0)\|_{L^2 (0,L)}\leq \delta, \qquad \|y_T-u(\cdot,
T)\|_{L^2  (0, L)} \leq \delta ,\ee there exists $h\in L^2  (\R)$
and the corresponding
 $z\in C(\R; L^2 (\R))\cap L^2 (\R;
H^1_{loc} (\R))$  solution of (\ref{line}).
Let $y$ be the restriction of $z$ to the domain
$(0,L)\times (0,T)$, and
\[ h_1 (t)= z(0,t), \quad h_2 (t)=z_x (L,t), \quad h_{3} (t)=
z_{xx} (L,t) \] for $0<t<T$.  Then, according to \cite{zhang-1}, we
have that $y\in X_T$ solves (\ref{4-7}) with $h_1\in H^{\frac13} (0,T),
\ h_2 \in L^2 (0,T) $ and $h_3\in H^{-\frac13} (0,T)$.  Moreover,
\[ y|_{t=0}=y_0, \qquad y|_{t=T} = y_T .\]
The proof is complete. $\blacksquare$

%%%%%%%%%%%%%%%%%%%%%%%%%%%%%%%%%%%%%%%%%%%%%%%%%%%%%%%%%%%%%%%%%%%%%%%%%%%%%
\section{Conclusion Remarks }\label{conclusion}
\setcounter{equation}{0}
The focus of  our discussion has been
on the boundary controllability of two classes of boundary control
systems described by the KdV equation posed on a finite domain
$(0,L)$, namely,
\be \label{7.1}
\begin{cases} u_t+u_x +uu_x +u_{xxx}=0, \quad (x,t)\in (0,L)\times
(0,T), \\ u(0,t)=g_1(t), \quad u(L,t)=g_2 (t), \quad u_x(L,t)= g_3
(t), \quad t\in (0,T), \end{cases} \ee
 and
\be \label{7.2}
\begin{cases} y_t+y_x +yy_x +y_{xxx}=0, \quad (x,t)\in (0,L)\times
(0,T), \\ y(0,t)=h_1(t), \quad y_x(L,t)=h_2 (t), \quad y_{xx}(L,t)=
h_3 (t)\quad t\in (0,T). \end{cases} \ee The linear systems
associated to these equations are obtained by dropping the nonlinear
term $uu_x$ and $yy_x$, respectively.  The  system (\ref{7.1}) has
been intensively studied and various controllability results have
been established in the past. However, there have been  few results
for the second system (\ref{7.2}) because of  some difficulties to
apply directly  the methods that work effectively for the system
(\ref{7.1}). In this paper, aided by the newly established   hidden
regularities of solutions of the KdV equation, we have succeeded in
overcoming those difficulties and established various boundary
controllability results for the system (\ref{7.2}) similar to those
known for the system (\ref{7.1}) in the literature. Furthermore,
with the new tool in hand, we have also  be able to improve some
known controllability results for the system (\ref{7.1}). Our
results can be summarized as below.

\begin{description}
    \item[(i)]  The linear system associated to \eqref{7.2}   is exactly controllable
    with two or three boundary  controls in action.  In any of those cases, the nonlinear system \eqref{7.2}
    is locally exactly controllable.
    \item[(ii)]   With only a single control $h_2$ in action ($h_1=h_3=0$), the linear system associated to \eqref{7.2}
     is exactly controllable if and only if $L$ does not belong to
     \ben
\FF = \left\{ L\in \RR^{+}:  L^2=-(a^2+ab+b^2)  \ \text{with} \ a,b
\in \CC \ \text{satisfying}   \quad \frac{e^a}{a^2}= \frac{e^b}{b^2}
=\frac{e^{-(a+b)}}{(a+b)^2}\right\}. \een
The nonlinear system
\eqref{7.2} is  also locally exactly controllable if $L\notin \FF$.
\item[(iii)]  When  only  control input $h_3$ is employed ($h_1=h_2=0$), the linear system associated to \eqref{7.2}
 is exactly controllable if and only if $L$ does not belong to
  \ben
  {\cal N}  =  \Big\{ L\in \RR^+:
L^2=-(a^2+ab+b^2) \ \text{with} \ a,b \in \CC \nonumber
 \ \text{satisfying}
  \quad
 ae^a  =   be^b
= -(a+b) e^{-(a+b)} \Big\}.\een Moreover, if $L\notin \cal N$, then
the nonlinear system \eqref{7.2} is locally exactly controllable.
\item[(iv)] The linear system associated to \eqref{7.1}   is exactly controllable
    with control inputs $g_1$ and $g_2$  in action (put $g_3=0$).  In this case, the nonlinear system \eqref{7.1}
    is locally exactly controllable.
\item[(v)] We have improved the regularity of the control $g_2$ in \eqref{7.1}. In previous work \cite{09glass-guerrero},
the control $g_2$ is known to belong to the space $H^{\frac16
-\epsilon}(0,T)$ for any $\epsilon>0$. In this paper we are able to
prove that the control input $g_2$ belongs in fact to the space
$H^{\frac 1 3}(0,T)$.
\end{description}

While some significant progresses have been made in the study of
boundary controllability of the KdV equation on a bounded domain,
there are still a lot of interesting  questions left open for
further investigations. One of them is the so-called critical length
problem. As it is well known now,  the linear systems associated to
(\ref{7.1}) and (\ref{7.2}) are not always exactly controllable if
only a single control input is allowed to act on the right end of
the spatial domain $(0,L)$. In general, if the associated linear
system is not exactly controllable, one would intend to believe the
nonlinear system is also not exactly controllable. However, for the
system (\ref{7.1}) with only control input $g_3$ in action, though
its associated linear system is not exactly controllable when $L\in
S$ (see \eqref{NN-1} for the definition of $S$),  the nonlinear
system (\ref{7.1}) has been  shown by  Coron and Crepeau
\cite{04coron-crepeau}, Cerpa \cite{07cerpa}, and Cerpa and Creapeau
\cite{09cerpa-crepeau} to be locally (large time) exactly
controllable. The questions still
remain open for other critical length problems.\\

\begin{open}(Critical length problems)
%\noindent {\bf  Problem 5.1}
\begin{itemize}
\item[(a)] Is the nonlinear system (\ref{7.2}) with only control
input $h_2$ in action exactly controllable  when the length $L$
of the spatial domain $(0,L)$ belongs  to the set $\FF$?

\item[(b)] Is the nonlinear system (\ref{7.2}) with only control
input $h_3$ in action  exactly controllable  when the length $L$
of the spatial domain $(0,L)$ belongs  to the set $\cal N$?

\item[(c)]  Is the nonlinear
system (\ref{7.1}) with only control input $g_3$ in action  exactly
controllable  when the length $L$ of the spatial domain $(0,L)$
belongs  to the set $\cal N$?

\end{itemize}
\end{open}

Most controllability results that have been established so far for both
systems (\ref{7.1}) and (\ref{7.2}) are local: one can only guide a
small amplitude initial state to a small amplitude terminal state by
choosing appropriate boundary control inputs. The following question
arises naturally.

%\noindent {\bf Problem 5.2}
\begin{open}\label{op2} (Global controllability problem)
Are the nonlinear systems (\ref{7.1}) and (\ref{7.2}) globally
exactly boundary controllable?
\end{open}

The following global interior stabilization result for the KdV
equation on a finite interval

\be \label{7.3}
\begin{cases} u_t+u_x +uu_x +u_{xxx} +a(x) u=0, \quad u(x,0)=u_0 (x), \quad  x\in (0,L),\,t>0,
\\ u(0,t)=0, \quad u(L,t)=0, \quad u_x(L,t)= 0, \quad t>0 \end{cases} \ee is well-known in the literature (see
\cite{perla, pazoto, rosier-z}).

\medskip
\noindent {\bf Theorem G.} {\em Assume the function $a\in
L^{\infty}(0,L)$ with $a(x)\geq 0$ and such that the support of $a$ is a nonempty
open subset of $(0,L)$. There exists $\gamma >0$ such that for any
$u_0 \in L^2 (0,L)$, the corresponding solution $u$ of (\ref{7.3})
belongs to the space $C([0, \infty); L^2 (0,L))$ and, moreover,
\[ \| u(\cdot, t)\|_{L^2 (0,L)}\leq \alpha (\|u_0 \|_{L^2 (0,L)})
e^{-\gamma t} \qquad \forall \ t\geq 0\] where $\alpha : \R^+\to \R
^+$ is a nondecreasing continuous function.}

 \vspace{.1in}
 \noindent
Combining Theorem G, Theorems \ref{theo-h-3},\ref{theo-h-1-2},
\ref{theo-h-2-3}  we have the following partial answers to the Open
Problem \ref{op2} for the nonlinear system (\ref{7.2}).
\begin{theo}\label{theo-5-1} There exists  $\delta >0$. For any $N>0$, one can find a $T>0$
depending only on $N$  and $\delta $ such that for any
$\phi , \ \psi \in L^2 (0,L)$ with $$\| \phi \|_{^2 (0,L)}\leq N,
\qquad  \| \psi \|_{^2 (0,L)}\leq \delta, $$ one can find either
\[ h_1\in H^{\frac13} (0,T), \quad h_2\in L^2 (0,T), \quad h_3=0,\] or
\[ h_1\in H^{\frac13} (0,T), \quad h_2=0,\quad h_3\in H^{-\frac13} (0,T),
\] or
\[ h_1=0,\quad h_2\in L^2  (0,T), \quad h_3\in H^{-\frac13} (0,T), \]
such that the nonlinear system (\ref{7.2}) admits a solution
$u\in C([0,T]; L^2 (0,L))$ satisfying
\[ u|_{t=0}= \phi , \qquad u|_{t=T} = \psi .\]
\end{theo}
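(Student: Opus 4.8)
The plan is to prove Theorem~\ref{theo-5-1} by a standard \emph{stabilize-then-control} argument, splitting the control horizon as $T=T_1+T_2$. On the first subinterval $[0,T_1]$ I would use the global interior stabilization result (Theorem~G) to drive the possibly large datum $\phi$ into a small $L^2$-ball; on the second subinterval $[T_1,T]$ I would invoke one of the three all-$L$, two-input local exact controllability results, namely Theorems~\ref{theo-h-1-2} (active controls $h_1,h_2$), \ref{theo-h-1-3} (active controls $h_1,h_3$) and \ref{theo-h-2-3} (active controls $h_2,h_3$), to steer the resulting small state exactly onto the prescribed target $\psi$ in a fixed time $T_2$. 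These three theorems correspond precisely to the three admissible choices of active controls listed in the statement, so any one of them (with the remaining input set to zero) closes the argument.

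Quantitatively, I would first fix a damping coefficient $a\in L^\infty(0,L)$ with $a\ge0$ whose support is a nonempty open subset of $(0,L)$, and let $\gamma>0$ and the nondecreasing function $\alpha$ be as in Theorem~G. Let $\delta_0$ be the radius furnished by the chosen local controllability theorem and set, say, $\delta:=\tfrac13\delta_0$, so that a state of norm $\le\delta_0/2$ together with a target of norm $\le\delta$ stays inside the local regime. Given $N>0$, I would then pick $T_1=T_1(N,\delta)$ so large that $\alpha(N)e^{-\gamma T_1}\le \delta_0/2$; by Theorem~G the solution of \eqref{7.3} issuing from $\phi$ satisfies $\|u(\cdot,T_1)\|_{L^2(0,L)}\le\delta_0/2$, and one sets $T=T_1+T_2$, which depends only on $N$ and $\delta$. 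Concatenating the two stages produces a trajectory equal to $\phi$ at $t=0$ and $\psi$ at $t=T$, lying in $C([0,T];L^2(0,L))$ because both pieces share the junction value at $t=T_1$; to keep the concatenated boundary inputs in $H^{\frac13}(0,T)$, $L^2(0,T)$ and $H^{-\frac13}(0,T)$ I would arrange the first-stage inputs to vanish near $t=T_1^-$ and the second-stage inputs near $t=T_1^+$.

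The main obstacle is the first stage, since Theorem~G is an \emph{interior} (distributed damping) result for system~\eqref{7.3}, whose homogeneous boundary conditions $u(0,t)=u(L,t)=u_x(L,t)=0$ are of the type of~\eqref{7.1}, whereas Theorem~\ref{theo-5-1} allows only the \emph{boundary} inputs of~\eqref{7.2}; moreover the damped trajectory solves $u_t+u_x+uu_x+u_{xxx}=-a(x)u$ and is therefore not literally an admissible trajectory of the pure KdV system. That some genuine decay mechanism is indispensable is already visible from the energy identity for~\eqref{7.2} with zero inputs, $\frac{d}{dt}\|y(\cdot,t)\|_{L^2(0,L)}^2=-y(L,t)^2-\tfrac23 y(L,t)^3-y_x(0,t)^2$, whose cubic term prevents unconditional dissipation of large data. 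To bridge interior and boundary control I would use two ingredients established earlier: first, the equivalence between~\eqref{7.1} and~\eqref{7.2} (the Remark following Proposition~\ref{regular}), which lets me work under \eqref{7.1}-type boundary conditions matching Theorem~G and then read off the free traces to return to~\eqref{7.2}; second, the fact that $-a(x)u\in L^1(0,T_1;L^2(0,L))$ since $u\in X_{T_1}$, so the damping may be carried as a source term within the linear theory of Propositions~\ref{allsys} and~\ref{regular}. Using Theorem~G as an a~priori size bound, I would produce boundary inputs whose corresponding undamped trajectory is driven into the ball of radius $\delta_0/2$ by time $T_1$, after which the local controllability theorem takes over. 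Making this reduction fully rigorous — replacing the internal feedback of Theorem~G by genuine boundary inputs while preserving the exponential shrinkage of the $L^2$ norm — is the delicate point; once it is granted, the second stage and the concatenation are routine.
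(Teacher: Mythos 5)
Your two-stage skeleton --- drive the large datum into a small ball with the damped system of Theorem G, then finish with one of the two-input local theorems --- is precisely the paper's plan: the paper's proof consists of recording that solutions of \eqref{7.3} possess the hidden regularity $\partial_x^k u\in L^\infty_x(0,L;H^{\frac{1-k}{3}}(0,T))$, $k=0,1,2$, and then combining Theorem G with the local controllability theorems ``as in the proof of Theorem 3.22 of \cite{Za-survey}''. You have also correctly isolated the crux: a damped trajectory solves $u_t+u_x+uu_x+u_{xxx}=-a(x)u\neq 0$, hence is \emph{not} a trajectory of \eqref{7.2} for any choice of $h_1,h_2,h_3$.

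The problem is that the two ingredients you propose to bridge this cannot do so, and this bridge is the heart of the proof. The equivalence remark after Proposition \ref{regular} exchanges which traces of one and the same solution are declared to be data, \emph{with the same interior forcing $f$ on both sides}; it cannot turn a solution of the damped equation into a solution of the undamped one. Carrying $-a(x)u\in L^1(0,T_1;L^2(0,L))$ as a source in Propositions \ref{allsys} and \ref{regular} likewise only yields estimates for the damped flow (this is, in fact, how one justifies the hidden regularity of \eqref{7.3} that the paper states --- but that is all it does). So ``producing boundary inputs whose corresponding undamped trajectory is driven into the small ball'' is not a step one may grant: for \eqref{7.2} it \emph{is} the semi-global steering assertion being proved, and your plan is circular exactly at this point. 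The missing idea is a domain extension. Extend $\phi$ by zero to $(0,L')$ with $L'>L$, take the damping $a$ supported in a nonempty open subset of $(L,L')$, and solve \eqref{7.3} on $(0,L')$. Since $a\equiv 0$ on $(0,L)$, the restriction $y:=u|_{(0,L)\times(0,T_1)}$ is a genuine solution of the KdV equation there; it satisfies $y(0,t)=u(0,t)=0$, and the traces $h_2:=u_x(L,\cdot)\in L^2(0,T_1)$, $h_3:=u_{xx}(L,\cdot)\in H^{-\frac13}(0,T_1)$ are admissible inputs precisely because of the hidden regularity of solutions of \eqref{7.3} at interior points --- which is why the paper's proof records that property. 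Theorem G applied on $(0,L')$ gives $\|y(\cdot,T_1)\|_{L^2(0,L)}\le\alpha(N)e^{-\gamma T_1}$, and Theorem \ref{theo-h-2-3} then steers this small state to $\psi$ (an extension to the left of $x=0$ instead produces the configuration $h_2=0$ of Theorem \ref{theo-h-1-3}; the statement only requires one of the three configurations). This is the same device the paper uses explicitly in Theorem 5.3 and the remark following it, where the inputs are read off as traces of a solution extended to $(0,2L)$ with damping supported in $(L,2L)$. With this replacement, your quantitative bookkeeping and the concatenation at $t=T_1$ are fine (gluing is harmless since all three input spaces are $H^s$ with $|s|<\frac12$), and the argument closes.
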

\begin{proof} Note first that for any $\phi \in L^2 (0,L)$, the
system (\ref{7.3}) admits a unique solution $u\in C([0,\infty); L^2
(0,L))$ which also possesses   the hidden regularity
\[ \partial ^k_x u(x,t) \in L^{\infty}_x (0,L;
H^{\frac{1-k}{3}}(0,T), \quad k=0, 1, 2.\] Then Theorem \ref{theo-5-1} follows
from Theorem G and Theorems \ref{theo-h-3}, \ref{theo-h-1-2}, \ref{theo-h-2-3} using the same argument as
that used in the proof of Theorem 3.22 in \cite{Za-survey}.
\end{proof}

\begin{rem}
 Theorem 5.1 only provides a partial answer to Problem 5.2 since   the amplitude of
 the terminal state is still required
 to be small. Question  remains:

 \smallskip
 Can small amplitude restriction on the terminal state be removed?
 \end{rem}

If one is allowed to use all three boundary control inputs,
 then the small amplitude restriction can be removed.

 \begin{theo} Let $N>0$ be given. There exists a $T>0$ such that for any $\phi , \
\psi \in L^2 (0,L)$ with $$\| \phi \|_{L^2 (0,L)}\leq N, \qquad \|
\psi \|_{L^2 (0,L)}\leq N, $$   the nonlinear equation \[ u_t +u_x
+uu_x +u_{xxx}=0, \quad x\in (0,L)\times (0,T) \] admits a solution
$u\in C([0,T]; L^2 (0,L))$ satisfying
\[ u|_{t=0}= \phi , \qquad u|_{t=T} = \psi .\]
 \end{theo}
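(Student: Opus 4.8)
The plan is to assemble the control in three consecutive stages: first use global stabilization to escape the large--data regime, then use the local exact controllability furnished by the earlier theorems to manoeuvre in a neighbourhood of the origin, and finally use the time--reversibility of the free KdV equation to reach the large terminal state by the very same mechanism that drains the large initial state. First I would fix the radius $\delta>0$ provided by the local controllability result with all three controls (Theorem~\ref{theo-h-1-2-3} linearised around $u\equiv 0$), so that any two states of $L^2(0,L)$--norm at most $\delta$ can be joined in a fixed time $T_0$ by admissible inputs $(h_1,h_2,h_3)\in H^{\frac13}(0,T_0)\times L^2(0,T_0)\times H^{-\frac13}(0,T_0)$. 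Using the global stabilization result (Theorem~G, with decay rate $\gamma$ and profile $\alpha$) I then choose $T_1=T_1(N,\delta)$ so large that $\alpha(N)e^{-\gamma T_1}\le\delta$, and set $T:=2T_1+T_0$; this is the source of the dependence of $T$ on $N$.

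\emph{Stage 1 (draining $\phi$).} Let $u$ solve the damped closed--loop system \eqref{7.3} with $u(\cdot,0)=\phi$. Theorem~G gives $\|u(\cdot,T_1)\|_{L^2(0,L)}\le\alpha(N)e^{-\gamma T_1}\le\delta$, and the hidden trace regularity (as in Proposition~\ref{traces}) places all boundary traces of $u$ in the spaces $H^{\frac{1-j}{3}}(0,T_1)$. Because all three inputs are available, I would realise this drained motion inside the \emph{free} controlled equation through the whole--line embedding used in the proof of Theorem~\ref{theo-h-1-2-3} (Zhang \cite{Za,zhang-1}): this produces admissible $(h_1,h_2,h_3)$ steering $\phi$ to a state $\phi_1$ with $\|\phi_1\|_{L^2(0,L)}\le\delta$ on $[0,T_1]$, together with a solution in $C([0,T_1];L^2(0,L))$.

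\emph{Stage 3 (reaching $\psi$) and gluing.} The terminal condition is treated symmetrically by the observation that $v(x,t):=u(L-x,T-t)$ solves the free KdV equation whenever $u$ does; this reflection interchanges the two ends of $(0,L)$ and converts ``reach $\psi$ at time $T$'' into ``drain the reflected datum $x\mapsto\psi(L-x)$ forward in time.'' Since all three traces are controlled, the reflected problem is again a fully actuated free KdV boundary control problem, so no critical--length restriction intervenes; running Stage~1 on the reflected datum and reflecting back yields, on $[T_1+T_0,T]$, a free trajectory issuing from a state $\psi_1$ with $\|\psi_1\|_{L^2(0,L)}\le\delta$ and terminating exactly at $\psi$. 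On the middle window $[T_1,T_1+T_0]$ I apply Theorem~\ref{theo-h-1-2-3} (around $u\equiv0$) to join $\phi_1$ to $\psi_1$. Concatenating the controls of the three stages and invoking the well--posedness of Proposition~\ref{allsys} produces a single solution $u\in C([0,T];L^2(0,L))\cap L^2(0,T;H^1(0,L))$ with $u|_{t=0}=\phi$ and $u|_{t=T}=\psi$, exactly as in the argument behind Theorem~3.22 of \cite{Za-survey}.

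\emph{Main obstacle.} The delicate step is Stage~1: one must convert the \emph{interior} dissipation guaranteed by Theorem~G into the dynamics of the \emph{free}, boundary--controlled equation while keeping the drained size below $\delta$ \emph{uniformly} over all $\|\phi\|_{L^2(0,L)}\le N$. The damped trajectory $u$ is not itself a free--equation trajectory, so the reduction must proceed through the full whole--line representation, which is available precisely because all three boundary inputs are in action; this is also the reason the small--amplitude restriction on the terminal state, unavoidable in Theorem~\ref{theo-5-1}, can now be removed.
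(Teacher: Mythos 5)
Your Stage 1 contains a fatal gap --- one you flag yourself as the ``main obstacle'' but do not resolve. A solution $u$ of the damped system \eqref{7.3} satisfies $u_t+u_x+uu_x+u_{xxx}=-a(x)u$, which is nonzero on the support of $a$ (a nonempty open subset of $(0,L)$) wherever $u\neq 0$; consequently no choice of boundary inputs $(h_1,h_2,h_3)$ can realise this trajectory as a solution of the free equation on $(0,L)\times(0,T_1)$, because boundary controls cannot emulate an interior forcing term. The ``whole-line embedding'' cannot repair this either: the whole-line result invoked in the proof of Theorem \ref{theo-h-1-2-3} (Theorem G of Section \ref{sec-non}, due to Zhang \cite{Za}) is perturbative --- it requires $y_0$ and $y_T$ to lie within a small distance of the traces of one fixed smooth global solution $w$ of the free equation --- so it cannot produce a free trajectory draining an arbitrary $\phi$ with $\|\phi\|_{L^2(0,L)}\leq N$, and in any case the damped trajectory is not the restriction of any free whole-line solution. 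Since your Stage 3 is Stage 1 run on a reflected datum (the reflection $v(x,t)=u(L-x,T-t)$ is indeed a symmetry of the equation, so that observation is sound), both the first and third legs of your concatenation collapse; only the middle, local leg survives, which is exactly the situation of Theorem \ref{theo-5-1} with its small-amplitude restriction on the terminal state.

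The paper circumvents precisely this obstruction by enlarging the domain rather than damping inside it: $\phi$ and $\psi$ are extended to $(0,2L)$ with equal mean, the global exact controllability theorem of Laurent--Rosier--Zhang \cite{lrz} is applied to the periodic KdV on $(0,2L)$ with an internal control supported in $(L,2L)$ --- entirely outside the physical interval --- and the solution is then restricted to $(0,L)$, where it genuinely solves the free equation and its traces at $x=0$ and $x=L$ can be read off as $h_1,h_2,h_3$. (The mean-matching condition on the extensions is needed because the control of \cite{lrz} preserves the mean.) Note that your three-stage skeleton --- stabilise, steer locally, reverse time --- is essentially the scheme by which \cite{lrz} proves its global theorem on the torus; but that scheme can only be run in a setting where the damping term is itself an admissible control, which is true for the internal control on $(0,2L)$ and false for the boundary-controlled system on $(0,L)$. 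That domain extension, not the mere availability of all three boundary traces, is what removes the small-amplitude restriction on $\psi$.
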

 \begin{proof} For given $\phi, \ \psi \in L^2 (0,L)$, let
 $\tilde{\phi}$ and $\tilde{\psi}$ be their extension from $(0,L)$
 to $(0,2L)$ such that
 \[  \tilde{\phi} \in L^2 (0,2L), \qquad \tilde{\psi}\in L^2 (0,2L),
 \qquad \int ^{2L}_0 \tilde{\phi}(x)dx = \int ^{2L}_0
 \tilde{\psi}(x)dx \]
 and consider the following internal control problem of the KdV
 equation posed on the interval $(0,2L)$ with periodic  boundary
 condition
 \[ \begin{cases}
 v_t+v_x+vv_x +v_{xxx}+ a(x) v=0, \quad v(x,0)=\phi (x), \qquad x\in (0,2L), \\
 v (0,t)=v(2L,t), \qquad v_x (0,t)= v_x (2L,t), \qquad
 v_{xx}(0,t)=v_{xx} (2L,t) \end{cases}\]
 where $a\in L^{\infty} (0,2L)$ and support of $a\in  (L,2L)$. The
 proof is completed by invoking Theorem 1.1 in \cite{lrz}.
 \end{proof}

\medskip
 Consequently, if chooses
 \[ h_1(t) =u(0,t), \qquad h_2(t) =u_x (L,t), \qquad
 h_3 (t)=u_{xx}(L,t), \]
 then the system (\ref{7.2}) will be guided from the given initial
 state $\phi $ to the given terminal state $\psi$. The only
 drawback is that we do not know exactly the regularities  of the
 boundary inputs $h_j$, $j=1,2,3$.

\medskip
 In Theorem \ref{theo-5-1}  and Theorem 5.3 the time interval $(0,T)$ used to conduct
control depends on the size of the initial state and terminal state. The larger of the size of the initial state,  the longer the
time interval $(0,T)$. Such type of controllability is usually
called the large time controllability. As it is well-known, the KdV
equation possesses infinite propagation speed. Thus one may wonder the following.

\begin{open}
Can the time interval $(0,T)$ in Theorems \ref{theo-5-1} and 5.3 be
chosen arbitrarily small?
\end{open}

%%%%%%%%%%%%%%%%%%%%%%%%%%%%%%%%%%%%%%%%%%%%%%%%%%%%%%%%%%%%%%%%%%%%%%%%%%%%%
%%%%%%%%%%%%%%%%%%%%%%%%%%%%%%%%%%%%%%%%%%%%%%%%%%%%%%%%%%%%%%%%%%%%%%%%%%%%%

\section{Appendices}
\setcounter{equation}{0}
\subsection{Proofs of Proposition \ref{propauxakdv} and Lemma \ref{iff-1}}
%%%%%%%%%%%%%%%55

\medskip
\noindent {\bf Proof of Proposition \ref{propauxakdv}.} \quad The
solution of the system \be\label{newlkdv} \left\{\ba{l}
w_t + w_{xxx}  =f, \quad w(x,0)= w_0(x), \quad  (x, t) \in (0,L)\times (0,T),\\
w_{xx}(0,t)=k_1 (t), \quad  w(L,t)=k_2 (t), \quad  w_x(L,t)=k_3
(t), \quad t\in (0,T)
  \ea \right.\ee  can be written as
$$w(t) = W_0(t)w_0  + W_{bdr}(t) \vec{k} + \int_0^t W_0(t-\tau ) f(\tau) d\tau $$
with $\vec{k}= (k_1, k_2, k_3 )$ where $W_0(t)$ is the $C_0$
semigroup in $L^2 (0,L)$ generated by the operator \[ Bf= -f'''\]
with the domain \[ {\cal D}(B)= \{ f\in H^3(0,L); \
f''(0)=f(L)=f'(L)=0 \} .\]  Then, $u(t)=W_0 (t) w_0 $ solves
\be\label{4.1} \left\{\ba{l}
u_t +u_{xxx}  =0, \quad u(x,0)= w_0(x), \quad (x,t)\in (0,L)\times (0,T),\\
u_{xx}(0,t)=0, \quad  u(L,t)=0, \quad  u_x(L,t)=0 ,\quad t\in (0,T),
  \ea \right.\ee $v(t)=W_{bdr} (t) \vec{k}$ solves
\be\label{4.-2} \left\{\ba{l}
v_t + v_{xxx}  =0, \quad v(x,0) = 0, \quad  (x,t)\in (0,L)\times (0,T),\\
v_{xx}(0,t)=k_1 (t), \quad  v(L,t)=k_2 (t), \quad  v_x(L,t)=k_3 (t),\quad t\in (0,T),\\
  \ea \right.\ee and $z(t)=\int ^t_0 W_0(t-\tau )
f(\tau) d\tau $ solves \be\label{4.3} \left\{\ba{l}
z_t + z_{xxx}  =f, \quad z(x,0) = 0 , \quad  (x,t)\in (0,L)\times (0,T),\\
z_{xx}(0,t)=0, \quad  z(L,t)=0, \quad  z_x(L,t)=0, \quad t\in (0,T).
 \ea \right.\ee As in the  proof of Proposition \ref{allsys}, it is easy to
see that for any $f\in L^1 (0,T; L^2 (0,L))$ and $w_0 \in L^2
(0,L)$, both $u=W_0 (t)w_0 $ and $z=\int ^t_0 W_0 (t-\tau )
f(\tau)d \tau$ belong to the space $X_T$ and, in addition, there
exists a constant $C>0$ such that
\[ \| u\|_{X_T}+\|z\|_{X_T} \leq C\left (\|w_0\|_{L^2 (0,L)} +\|
f\|_{L^1 (0,T; L^2 (0,L))} \right ).\]

\medskip

For $v(t)=W_{bdr} (t)\vec{k}$,  following \cite{bsz-finite}, we
first look for an explicit representation formula. Applying  the
Laplace transform with respect to $t$ in both sides of the equation
in (\ref{4.-2}),
 (i.e. $\h{v}(s,x)=\int_0^{\infty} e^{-st} v(t) dt$), we obtain
\be\label{kdvlp} \bc
s\h{v} + \h{v}_{xxx}  =0, \quad x\in (0,L), \ s>0,\\
\h{v}_{xx}(0,s)=\h{k}_1(s), \quad  \h{v}(L,s)=\h{k}_2(s), \quad  \h{v}_x(L,s)=\h{k}_3(s), \quad  s>0.\\
 \ec\ee
Its solution $\h{v}(x,s)$   can be written  as $\h{v}(x,s)=
\sum_{j=1}^3 c_j(s) e^{\l_j (s)x}$ where $\l _j$ solves
characteristic equation $ s+\l^3=0$, i.e.  \ben \l_1 &=& i\rho,
\quad
       \l_2= -i\rho \left( \frac{1+i\sqrt{3}}{2}\right), \quad
    \l_3= -i\rho \left( \frac{1-i\sqrt{3}}{2}\right)
\een with  $s= \rho^3$.  Imposition of  the boundary conditions of
(\ref{kdvlp}) yields that $c_j=c_j(s)$ for $j=1,2,3$ solves the
system

\[
\left(
\begin{array}{ccc}
\l_1^2  &\l_2^2   &\l_3^2   \\
e^{\l_1 L}  & e^{\l_2 L}  & e^{\l_3 L}   \\
\l_1 e^{\l_1 L}  & \l_2 e^{\l_2 L}  &  \l_3 e^{\l_3 L}
\end{array}
\right)
\left(
\begin{array}{ccc}
  c_1 \\
  c_2 \\
  c_3
\end{array}
\right)
=\left(
\begin{array}{ccc}
  \h{k}_1 \\
  \h{k}_2 \\
  \h{k}_3
\end{array}
\right).\] By Cramer rule,  $$c_j =\frac{\Delta_j}{\Delta}, \quad
\text{for} \quad j=1,2,3,$$ where \[ \Delta =\Delta (s) = \left |
\begin{array}{ccc}
\l_1^2  &\l_2^2   &\l_3^2   \\
e^{\l_1 L}  & e^{\l_2 L}  & e^{\l_3 L}   \\
\l_1 e^{\l_1 L}  & \l_2 e^{\l_2 L}  &  \l_3 e^{\l_3 L}
\end{array}
\right | \]
 and $\Delta _j (s) $ is  the determinant of the matrices obtained
by changing the $j$th-column of $\Delta$ by the vector
 $(\h{k}_1,\h{k}_2,\h{k}_3)^T$ for $j=1,2,3$.
Taking the inverse Laplace transform of $\widehat{v}$ and
following the same arguments as those in
   \cite{03bona-sun-zhang} lead us to  the following representation of the
  solution $v$ of the system (\ref{4.-2}):
 \[ v(x,t)=\sum ^3_{m=1} v_m (x,t)\]
 with
 \[ v_m (x,t)=\sum ^3_{j=1}v_{j,m}(x,t) \quad \text{and} \quad
  v_{j,m}(x,t)= v_{j,m}^+ (x,t)+v_{j,m}^-(x,t)\]
 where  for $m, \ j=1,2,3$,
\[ v_{j,m}^+ (x,t)= \displaystyle\frac{1}{2\pi}   \int^{\infty}_0  e^{i\rho^3t+ \l^+_j(\rho)x}
    \frac{\Delta_{j,m}^+(\rho)}{\Delta ^+(\rho)} \h{k}_m^+(\rho)
    3\rho^2 d\rho,\]
     \[ v_{j,m}^-(x,t)= \overline{v_{j,m}^+ (x,t)}\]
    and \[\h{k}_m^+(\rho)=\h{k}_m(i\rho ^3), \ \Delta^+(\rho)=\Delta (i\rho ^3), \ \Delta ^+_{j,m}
    (\rho)= \Delta_{j,m} (i\rho ^3),\
    \lambda _j ^+ (\rho)=\lambda _j (i\rho^3).\]

\begin{lemm}\label{e-1} Let $T>0 $ be given. There exists a constant $C>0$ such that for any
$\vec{k}\in {\cal K}_T$, the system  (\ref{4.-2}) admits a unique
solution $v\in X_T$. Moreover, there exists a constant $C>0$
such that
\[ \|v\|_{X_T} +\sum _{j=0}^2 \|\partial _x^j v\|_{L^{\infty}_x
(0,L; H^{(1-j)/3}(0,T))} \leq C\|\vec{k}\|_{{\cal K}_T} .\]
\end{lemm}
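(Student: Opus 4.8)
The plan is to estimate directly the oscillatory integrals in the representation formula above, following the explicit-solution method of \cite{03bona-sun-zhang, zhang-1}. Since $v_{j,m}^-=\overline{v_{j,m}^+}$, it suffices to bound each $v_{j,m}^+$ for $j,m\in\{1,2,3\}$; and since the $H^s(0,T)$ and $X_T$ norms of a restriction are dominated by the corresponding norms over $\R$, I would first extend each $k_m$ off $(0,T)$ to $\R$ with comparable ${\cal K}$-norm, prove all estimates globally in $t$, and restrict back to $(0,T)$ at the very end. After the change of variable $\mu=\rho^3$ (so that $3\rho^2\,d\rho=d\mu$), each $v_{j,m}^+(x,\cdot)$ is realized as the inverse temporal Fourier transform of the symbol $M_{j,m}(x,\rho)\,\h{k}_m^+(\rho)$, where
\[ M_{j,m}(x,\rho):=e^{\lambda_j^+(\rho)x}\,\frac{\Delta_{j,m}^+(\rho)}{\Delta^+(\rho)}, \]
and $\Delta_{j,m}^+$ is the $(m,j)$ cofactor of the coefficient matrix (independent of the data). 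The whole lemma thus reduces to pointwise bounds on the multipliers $M_{j,m}$ that are \emph{uniform in} $x\in(0,L)$.

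By Plancherel in $t$, relating $\|k_m\|_{H^{s_m}(0,T)}$ with $s_1=-\tfrac13,\ s_2=\tfrac13,\ s_3=0$ to the weighted measure $(1+\rho^6)^{s_m}\,3\rho^2\,d\rho$ acting on $|\h{k}_m^+|^2$, and doing the same for $\partial_x^j v$, the trace estimate $\|\partial_x^j v(x,\cdot)\|_{H^{(1-j)/3}(0,T)}\le C\|\vec{k}\|_{{\cal K}_T}$ follows once one shows, for $\rho\ge1$ and uniformly in $x\in(0,L)$,
\[ \big|M_{j,m}(x,\rho)\big|\ \le\ C\,\rho^{\,3s_m-1}, \]
that is, $\rho^{-2}$, $1$, $\rho^{-1}$ for $m=1,2,3$ respectively. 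A pleasant structural feature is that, in amplitude, the factor $\rho^{\,j}$ produced by $\partial_x^j$ (through $|\lambda_j^+|^{\,j}\sim\rho^{\,j}$) and the factor $\rho^{\,1-j}$ coming from the output index $(1-j)/3$ combine to $\rho$, independently of $j$; hence the \emph{same} pointwise bound on $M_{j,m}$ serves all three traces $j=0,1,2$. The remaining norms $\sup_t\|v\|_{L^2(0,L)}$ and $\|v\|_{L^2(0,T;H^1(0,L))}$ defining $X_T$ follow from the same pointwise control, integrating in $x$ rather than applying Plancherel in $t$.

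The heart of the matter — and the step I expect to be the main obstacle — is the large-$\rho$ asymptotics of $M_{j,m}$. From $s=i\rho^3$ one computes
\[ \lambda_1^+=i\rho,\qquad \lambda_2^+=\Big(\tfrac{\sqrt3}{2}-\tfrac i2\Big)\rho,\qquad \lambda_3^+=\Big(-\tfrac{\sqrt3}{2}-\tfrac i2\Big)\rho, \]
so that $\Re\lambda_2^+>0>\Re\lambda_3^+$. The mode $j=3$ carries the decaying factor $e^{\lambda_3^+x}$ and $j=1$ is purely oscillatory, so these are harmless once polynomial orders are accounted for; the dangerous term is $j=2$, where $e^{\lambda_2^+x}$ grows exponentially in $\rho$. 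Here one uses that $\Delta^+(\rho)$ is dominated for large $\rho$ by a single exponential of magnitude $e^{(\sqrt3/2)\rho L}$, while the cofactor $\Delta_{2,m}^+$, formed by deleting the column carrying that growth, does not share it. Factoring the dominant exponential out of $\Delta^+$ turns $e^{\lambda_2^+x}/\Delta^+$ into $e^{\lambda_2^+(x-L)}$ times a ratio of polynomially controlled quantities; since $x\le L$ this exponential is bounded (and decays strictly for $x<L$), so the estimate is uniform in $x$. The genuinely delicate point is then the purely polynomial bookkeeping of the cofactor-to-determinant orders in the \emph{borderline} configurations where the net exponential is merely oscillatory, which must be tracked through the precise asymptotics of $\Delta^+$ and $\Delta_{j,m}^+$ to confirm the exponents $3s_m-1$.

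It remains to treat the low-frequency regime, where the asymptotic argument does not apply. There one must verify $\Delta^+(\rho)\neq0$ for $\rho\in(0,\infty)$ — equivalently, that the homogeneous version of \eqref{4.-2} with $\vec{k}=0$ admits only the trivial solution, which is part of the well-posedness theory established in \cite{bsz-finite, 03bona-sun-zhang} — and confirm that the weight $3\rho^2$ keeps the integrals convergent as $\rho\to0^+$. Assembling the contributions of all nine pairs $(j,m)$ together with their complex conjugates $v_{j,m}^-$ then yields both the existence of $v\in X_T$ with the stated trace regularity and the claimed estimate, uniqueness being inherited from the well-posedness of \eqref{4.-2}.
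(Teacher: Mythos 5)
Your proposal follows essentially the same route as the paper's own proof: the paper likewise starts from the Laplace-transform representation $v=\sum v_{j,m}^{\pm}$, tabulates the large-$\rho$ asymptotics of the ratios $\Delta_{j,m}^+/\Delta^+$ (which, combined with $e^{\lambda_j^+x}$, give exactly your multiplier bounds $\rho^{-2},\,1,\,\rho^{-1}$ for $m=1,2,3$, uniformly in $x$ because the growing mode $j=2$ is killed by the exponential decay hidden in $\Delta_{2,m}^+/\Delta^+$), and then applies Plancherel in $t$ after the substitution $\mu=\rho^3$ to get both the trace estimates and the $X_T$ bound. Your additional attention to the extension of the data to $\R$ and to the nonvanishing of $\Delta^+$ at low frequencies is a refinement of, not a departure from, the paper's argument (which only invokes the $\rho\to+\infty$ asymptotics).
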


\noindent {\bf Proof.}
Note that as stated above, the solution $v$ can be written as $$v(x,t)=v_1(x,t)+v_2(x,t)+v_3(x,t).$$ Let us prove Lemma  \ref{e-1}  for  $v_1$.   First of all, by straightforward
computation, we can list
  the asymptotic behavior  of the ratios $\frac{\Delta_{j,m}^+(\rho)}{\Delta^+(\rho)}$
   for $\rho\to + \infty$
as below.

\begin{center}
\renewcommand{\arraystretch}{2}
\begin{tabular}{||>{$}c<{$}| >{$}c<{$}|>{$}c<{$}||}\hline
\frac{\Delta_{1,1}^+(\rho)}{\Delta ^+(\rho)}  \sim \rho^{-2}e^{-\frac{\sqrt{3}}{2}\rho L}
 & \frac{\Delta_{2,1}^+(\rho)}{\Delta
^+(\rho)}
 \sim \rho^{-2} e^{-\sqrt{3}\rho L} & \frac{\Delta_{3,1}^+(\rho)}{\Delta ^+(\rho)}
 \sim \rho^{-2} e^{-\sqrt{3}\rho L} \\
\hline \frac{\Delta_{1,2}^+(\rho)}{\Delta ^+(\rho)} \sim 1 &
\frac{\Delta_{2,2}^+(\rho)}{\Delta ^+(\rho)} \sim
e^{-\sqrt{3} \rho L} & \frac{\Delta_{3,2}^+(\rho)}{\Delta ^+(\rho)}  \sim 1 \\
 \hline
\frac{\Delta_{1,3}^+(\rho)}{\Delta ^+(\rho)}  \sim  \rho^{-1}
&\frac{\Delta_{2,3}^+(\rho)}{\Delta ^+(\rho)}  \sim
 \rho^{-1} e^{-\frac{\sqrt{3}}{2}\rho L} & \frac{\Delta_{3,3}^+(\rho)}{\Delta ^+(\rho)}  \sim  \rho^{-1}
   \\ \hline
\end{tabular}
\end{center}

\medskip

\noindent As
\[ v_1(x,t)= \frac{3}{\pi} \sum ^3_{j=1}\displaystyle  {\cal R}e\  \int^{\infty}_0
e^{i\rho^3t}e^{ \l^+_j(\rho)x}
    \frac{\Delta_{j,1}^+(\rho)}{\Delta ^+(\rho)} \h{k}_1^+(\rho)
     \rho^2 d\rho, \]
we have
 \ben \sup _{0<t<T} \|v_1(\cdot, t)\|^2_{L^2(0,L)} & \le &  C \int_0^{\infty} \rho^{-2} |\h{k_1}^+(\rho) |^2
  \rho^2 d\rho\\
 &\le& C \int_0^{\infty} \mu^{-2/3} |\h{k_1}(i\mu)|^2 d\mu\\
  &\le&  C \|k_1\|_{H^{-\frac13}(\RR^+)}^2\\ &\leq & C\|\vec{k}\|_{{\cal K}_T}.
  \een
  Furthermore, for $\ell=-1, 0,1$, set $\mu = \rho ^3, \ \theta
  (\mu)=\mu ^{\frac13}$,

 \begin{eqnarray*}
 \partial_x^{\ell+1} v_1(x,t)&=&\frac{3}{\pi} \sum ^3_{j=1}\displaystyle
 {\cal R}e\  \int^{\infty}_0  (\lambda _j^+(\rho))^{\ell+1}e^{i\rho^3t}e^{ \l^+_j(\rho)x}
    \frac{\Delta_{j,1}^+(\rho)}{\Delta ^+(\rho)} \h{k}_1^+(\rho)
     \rho^2 d\rho\\
     &=& \frac{1}{\pi} \sum ^3_{j=1}\displaystyle
 {\cal R}e\  \int^{\infty}_0  (\lambda _j^+(\theta(\mu)))^{\ell+1}e^{i\mu t}e^{ \l^+_j(\theta(\mu))x}
    \frac{\Delta_{j,1}^+(\theta(\mu))}{\Delta ^+(\theta(\mu))} \h{k}_1(i\mu)
     d\mu
\end{eqnarray*}
Applying Plancherel Theorem in time $t$ yields that, for any $x\in
(0,L)$,
\begin{eqnarray*}
\| \partial_x^{\ell+1} v_1(x, \cdot)\|_{H^{-\frac{\ell}{3}} (0,T)} ^2&
\leq& C \sum ^3_{j=1} \int^{\infty}_0  \mu ^{-\frac{2\ell}{3}} \left
|(\lambda _j^+(\theta(\mu)))^{\ell+1}e^{ \l^+_j(\theta(\mu))x}
    \frac{\Delta_{j,1}^+(\theta(\mu))}{\Delta ^+(\theta(\mu))}
    \h{k}_1(i\mu) \right |^2 d\mu\\ &\leq & C
    \int^{\infty}_0 \mu ^{-\frac{2\ell}{3}} |\h{k}_1(i\mu )|^2
    d\mu \\ &\leq & C\|k_1\|_{H^{-\frac{\ell}{3}} (0,T)}^2\\ &\leq & C\|\vec{k}\|_{{\cal K}_T}^2.
    \end{eqnarray*}
for $\ell=-1,0,1$. Consequently
\[ \sup _{0<x<L}\| \partial_x^{\ell+1} v_1(x, \cdot)\|_{H^{-\frac{\ell}{3}}
(0,T)} \leq C\|\vec{k}\|_{{\cal K}_T} \] for $\ell=-1,0,1$. In
particular,
\[ \| v_1\|_{L^2 (0,T;  H^1(0,L))} \leq C\|\vec{k}\|_{{\cal K}_T}, \]
which ends the proof of Lemma \ref{e-1} for $v_1$. The proofs for
$v_2$ and $v_3$ are similar.\quad $\blacksquare$

\medskip
Now we turn to complete the proof of Proposition \ref{propauxakdv}.
It remains to prove that,
\begin{equation}\label{e2} \|\partial ^j_x u\|_{L^{\infty}_x (0,L;
H^{\frac{1-j}{3}}(0,T))} + \|\partial ^j_x z\|_{L^{\infty}_x (0,L;
H^{\frac{1-j}{3}}(0,T))}\leq C(\|w_0\|_{L^2 (0,T)} +\|f\|_{L^1
(0,T; L^2 (0,L))})
\end{equation}
for $j=0,1,2$.
\smallskip To this end, note that $u$ and $z$ can be written as
\[ u(t)= W_R (t) \tilde{w}_0-W_{bdr} (t) \vec{p}, \qquad z(t)= \int ^t_0
W_R (t-\tau )\tilde{f} (\tau ) d\tau - W_{bdr} (t) \vec{q}, \]
respectively. Here
\begin{itemize}
\item[(i)] $\tilde{w}_0$ and $\tilde{f}$ are zero extensions of
$w_0$ and $f$ from $(0,L)$ to $\R$:
\[ \tilde{w}_0(x) =\left \{ \begin{array}{ll} w_0 (x) & \ x\in
(0,L),\\  0& \ x\notin (0,L) \end{array} \right.  \quad
\tilde{f}_0(x,t) =\left \{ \begin{array}{ll} f(x,t) & \ (x, t)\in (0,L)\times (0,T),\\
0& \ x\notin (0,L). \end{array}  \right.\] \item[(ii)] $W_R (t) $
is the $C_0$ semigroup associated to the initial value problem
\[ \mu _t +\mu_{xxx}=0, \quad \mu(x,0)=\tilde{w}_0 (x), \qquad x\in
R, \ t\in (0,T) .\]
\item[(iii)]$\vec{p}= (p_1, p_2 , p_3)$ with
\[ p_1(t) =\mu _{xx}(0,t), \quad p_2 (t)=
\mu (L,t), \quad p_3 (t)= \mu _x (L,t) \] where $\mu (t)= W_R (t)
\tilde{w}_0 $.
 \item[(iv)] $\vec{q}=(q_1,
q_2, q_3)$ with
\[ q_1(t) =  \tilde{z}_{xx} (0,t) , \quad q_2 (t)=
\tilde{z}(L,t), \quad q_3 (t)=   \tilde{z}_x (L,t) \] where
$$\tilde{z}= \int ^t_0 W_R (t-\tau) \tilde{f} (\tau) d\tau .
$$
\end{itemize}
According to \cite{10RiKZ}, for $j=0,1,2$,
 \[ \|\partial ^j_x
\mu \|_{L^{\infty}_x (\RR; H^{\frac{1-j}{3}}(0,T))} \leq C
\|\tilde{w}_0\| _{L^2 (\RR)} \leq C \|w_0\|_{L^2 (0,L)}\] and
\[ \|\partial ^j_x
\tilde{z}\|_{L^{\infty}_x (\RR; H^{\frac{1-j}{3}}(0,T))} \leq C
\|\tilde{f}_0\| _{L^1 (0,T; L^2 (\RR))} \leq C \|f_0\|_{L^1 (0,T;
L^2 (0,L))}.\] Furthermore, by Lemma \ref{e-1},  \[ \|\partial
^j_x W_{bdr} (t) \vec{p}\|_{L^{\infty}_x (0,L;
H^{\frac{1-j}{3}}(0,T))} \leq C \|\vec{p}\|_{{\cal K}_T}\leq
C\|w_0\|_{L^2 (0,L)} \] and

\[ \|\partial ^j_x W_{bdr} (t)
\vec{q}\|_{L^{\infty}_x (0,L; H^{\frac{1-j}{3}}(0,T))} \leq C
\|\vec{q}\|_{{\cal K}_T}\leq C\|f\|_{L^1 (0,T; L^2 (0,L))}.  \]
The proof of Proposition \ref{propauxakdv} is thus complete. $\blacksquare$

\bigskip
\noindent {\bf Proof  of Lemma \ref{iff-1}}. As in the above proof
(see also \cite{03bona-sun-zhang}), the solution $\mu$ of \[ \left \{
\begin{array}{l} \mu_t +\mu _{xxx}=0, \quad \mu (x,0)=0 , \
(x,t)\in (0,L)\times (0,T), \\ \mu (0,t)=0, \quad \mu (L,t)
=g_2(t), \quad  \mu  _x (L,t)=0,\quad t\in(0,T)  \end{array} \right. \] can be
written as \[ \mu (x,t)=\mu _1 (x,t)+ \mu _2 (x,t) + \mu _3 (x,t) \]
with
\[ \mu _j (x,t) =\frac{3}{\pi} {\cal R}e \int ^{\infty}_0 e^{i\rho
^3t} e^{\lambda _j(\rho)x} S_j(\rho) \rho ^2 \hat{g}_2 (i\rho ^3)
d\rho\] for $j=1,2,3$ where
\[ \lambda _1 (\rho)=i\rho, \quad \lambda _2
(\rho)=\frac{\sqrt{3}}{2}\rho -\frac12 i\rho, \quad \lambda _3
(\rho)=-\frac{\sqrt{3}}{2}\rho -\frac12 i\rho,\]
\[ S_1 (\rho)\sim 1, \quad S_2 (\rho)\sim
e^{-\frac{\sqrt{3}}{2}L\rho}, \quad S_3 (\rho) \sim 1, \quad as \
\rho \to +\infty .\]  Arguing as before, $g_2\in H^{\frac13}
(0,T)$ implies that $\mu \in X_T$. On the other hand, if $\mu \in
X_T$, we show that we must have $g_2\in H^{\frac13} (0,T)$. First
note that as
\begin{eqnarray*} \mu _1 (x,t) &=& \frac{3}{\pi} {\cal
R}e\int ^{\infty}_0 e^{i\rho ^3 t}e^{i\rho x} S_1 (\rho ) \rho ^2
\hat{g}_2(i\rho ^3) d\rho\\
&=& \frac{1}{\pi} {\cal R}e \int ^{\infty}_0 e^{i\nu t} e^{i\nu
^{\frac13}x} S_1(\nu ^{\frac13}) \hat{g}_2(i\nu) d\nu
\end{eqnarray*}
and
\[ \partial _x \mu _1 (x,t)= \frac{1}{\pi} {\cal R}e \int ^{\infty}_0 e^{i\nu t} e^{i\nu
^{\frac13}x} \nu ^{\frac13}S_1(\nu ^{\frac13}) \hat{g}_2(i\nu)
d\nu ,\] it follows from the Plancherel  Theorem that for a constant $c>0$\[
\|\partial _x \mu _1\|_{L^2_x (0,L; L^2_t (R))}^2 =c
\|g_2\|_{H^{\frac13}(0,T)}^2 .\] Therefore, $\mu _1 \in L^2
(0,T; H^1 (0,L))$ if  and only if $g_2\in H^{\frac13} (0,T)$. Regarding
 $\mu _2 $, as
\[ \partial _x \mu _2 (x,t)= \frac{1}{\pi} {\cal R}e \int ^{\infty}_0 e^{i\nu t}
\exp \left (-\frac{\sqrt{3}}{2} \nu^{\frac13}(L-x)-\frac12 ix\nu
^{\frac13}\right ) \nu ^{\frac13}S_2(\nu
^{\frac13})e^{\frac{\sqrt{3}}{2} \nu^{\frac13}L} \hat{g}_2(i\nu)
d\nu ,\] we obtain
\[ \| \partial _x \mu _2 (x, \cdot )\|^2_{L^2_t (\R)} = c\int
^{\infty}_0 e^{-\sqrt{3}\nu ^{\frac13}(L-x)} \nu ^{\frac23}
|\hat{g}_2 (i\nu )|^2 d\nu \] and
\[ \int ^L_0 \| \partial _x \mu _2 (x, \cdot )\|^2_{L^2_t (\R)} dx
= c \int ^{\infty}_0 \nu ^{\frac13} |\hat{g}_2 (i\nu )|^2 d\nu =
c\|g_2\|_{H^{\frac16} (0,T)} .\] Similarly, we also have

\[ \int ^L_0 \| \partial _x \mu _3 (x, \cdot )\|^2_{L^2_t (\R)} dx
= c \int ^{\infty}_0 \nu ^{\frac13} |\hat{g}_2 (i\nu )|^2 d\nu =
c\|g_2\|_{H^{\frac16} (0,T)} .\]  Hence, $\mu_2 +\mu _3 \in X_T$ if
and only if $g_2\in H^{\frac16} (0,T)$. Consequently, $\mu \in X_T$
if and only if $g_2\in H^{\frac13} (0,T)$. The proof of Lemma
\ref{iff-1} is complete. $\blacksquare$

%\end{proof}

%%%%%%%%%%%%%%

\subsection{Proof of Lemma \ref{next}}

%\begin{proof} {\bf Lemma \ref{finsys}} \\
If $N_T\neq \{ 0 \}$, then  the map $\va_T \in \CC N_T \to A(\va_T) \in \CC N_T$
 has at least one eigenvalue. Therefore there exist
$\lambda \in \CC$  and $\va_0 \in H^3(0,L) \setminus \{ 0\}$ such that
\be \label{vrpropio}\begin{cases}
\lambda \va_0 = - \va_0' - \va_0''',\\
\va_0(0)=0, \ \va_0'(0)=0, \  \va_0(L)+\va_0''(L)=0, \ \va'(L)=0.
\end{cases}
\ee

The solution of (\ref{vrpropio})  satisfies $\va_0(x)= \sum_{j=1}^3 C_j e^{\mu_j x}$
with  $\mu_j$ the roots of the polynomial  $$P(\mu) = \lambda + \mu + \mu^3. $$
More explicitly, they satisfy
\be\label{conditions}
\begin{cases}
 \mu_1+\mu_2+\mu_3=0\\
 \mu_1 \mu_2 +\mu_2 \mu_3 + \mu_3 \mu_1=1\\
 \mu_1 \mu_2 \mu_3=\lambda
\end{cases}
\ee

\noindent and $C_j$ for $j=1,2,3$  are the  solutions of the system
$$\left(\ba{ccc}
 1 & 1 & 1\\
\mu_1 & \mu_2 & \mu_3 \\
(1 + \mu_1^2) e^{\mu_1 L} & (1 + \mu_2^2) e^{\mu_2 L} & (1 + \mu_3^2) e^{\mu_3 L}\\
\mu_1 e^{\mu_1 L} & \mu_2 e^{\mu_2 L} & \mu_3 e^{\mu_3 L}
\ea  \right)
\left( \ba{c}  C_1 \\ C_2\\ C_3  \ea \right) =
\left( \ba{c}   0 \\ 0\\ 0        \ea \right).$$

Let us denote,  $a=L\mu_1$ and $ b=L\mu_2$.     Then by
\eqref{conditions},   $ c=L\mu_3 = -L(a+b)$ and  $$L^2=
-(a^2+ab+b^2).$$ Reducing the  rows of the matrix, one obtains the
new one

$$M:=\left(\ba{ccc}
 1 & 1 & 1\\
0 & b-a & -2a -b \\
0 & (L^2 +b^2) e^b - (L^2+a^2) e^a &  (L^2 +(a+b)^2)e^{-(a+b)} - (L^2 +a^2)e^a\\
0 & be^b - ae^a& -(a+b) e^{-(a+b)}-ae^a ,
\ea  \right) .$$

The  system has non-zero solutions if $\det(M) \neq 0$, which implies
\be\label{wq1}
\frac{(a+b) e^{-(a+b)}+ae^a}{2a+b} &=& \frac{b e^b - a e^a}{b-a},\\
\label{wq2}\frac{ab e^{-(a+b)}+  b(a+b)e^a}{2a+b} & =&\frac{ b(a+b)e^a-a(a+b)e^b}{b-a}.
\ee
Simplifying \eqref{wq1} one gets that
\ben
e^{-(a+b)}&=&\frac{2a+b}{b^2-a^2}b e^b - \frac{2b+a}{b^2-a^2}a e^a\\
            &=& \frac{(a+b)}{(b-a)}\left( \frac{2a+b}{b}e^b - \frac{2b+a}{a}e^a\right)
\een
and from \eqref{wq2}, we obtain

\ben
   (2a+b)(\frac{b}{b+a} -  \frac{a+b}{b}) e^b& =&  ( 2b+a)(\frac{a}{b+a}  - \frac{a+b}{a} ) e^a\\
% e^b& =&  \frac{ (2b+a)}{(2a+b)}\frac{b(a+b)(b^2+2ab)}{a(a+b)(a^2+2ab)} e^a\\
 e^b&=& \frac{b^2}{a^2} e^a.
\een
Therefore, the set of non-zero solution is empty if and only if $L$ does not belong to

 \ben
\FF = \left\{ L\in \NN:  L^2=-(a^2+ab+b^2)  \ \text{with} \ a,b \in \CC^2 \ \text{satisfying}   \quad
\frac{e^a}{a^2}= \frac{e^b}{b^2} =\frac{e^{-(a+b)}}{(a+b)^2}\right\},
\een which concludes the proof of Lemma \ref{next}.
%\end{proof}

%\bibliographystyle{amsplain}
%\bibliography{kdv-crz1}

\begin{thebibliography}{100}

\bibitem{03bona-sun-zhang}
J.~L. Bona, \and S.~M. Sun, \and B.-Y. Zhang, \emph{A nonhomogeneous
  boundary-value problem for the {K}orteweg-de {V}ries equation posed on a
  finite domain}, Comm. Partial Differential Equations \textbf{28} (2003),
 1391--1436.

\bibitem{bsz-finite}  J.~L. Bona, \and S.~M. Sun, \and B.-Y. Zhang,
 \emph{Nonhomogeneous problem for the Korteweg-de Vries
equation in a bounded domain II}, J. Differential Equations, {\bf 247}
\,(2009), 2558--2596.

\bibitem{07cerpa}
E. Cerpa, \emph{Exact controllability of a nonlinear
{K}orteweg-de {V}ries
  equation on a critical spatial domain}, SIAM J. Control Optim. \textbf{46}
  (2007),  877--899 (electronic).

\bibitem{09cerpa-crepeau}
E. Cerpa, \and E. Cr{\'e}peau, \emph{Boundary
controllability for
  the nonlinear {K}orteweg-de {V}ries equation on any critical domain}, Ann.
  Inst. H. Poincar\'e Anal. Non Lin\'eaire \textbf{26} (2009), no.~2, 457--475.


\bibitem{c-g-1} T. Colin, \and
J.-M. Ghidaglia,\emph{ Un probl\`eme aux limites pour
l'\'equation de Korteweg-de Vries sur un intervalle born\'e.}
(French) [A boundary value problem for the Korteweg-de Vries
equation on a bounded interval] Journes ``Equations aux Drives
Partielles'' (Saint-Jean-de-Monts, 1997), Exp. No. III, 10 pp.,
\'Ecole Polytech., Palaiseau, 1997.

\bibitem{c-g-2} T. Colin, \and
J.-M. Ghidaglia, \emph{Un
probl\`eme mixte pour l'\'equation de Korteweg-de Vries sur un
intervalle born\'e. }(French) [A mixed initial-boundary value
problem for the Korteweg-de Vries equation on a bounded interval]
C. R. Acad. Sci. Paris S\'er. I Math. \textbf{324} (1997),
599--603.

\bibitem{ColGhi01} T. Colin, \and
J.-M. Ghidaglia, \emph{An initial-boundary-value
problem fo the Korteweg-de Vries Equation posed on a finite
interval}, Adv. Differential Equations \textbf{6} (2001),
1463--1492.





\bibitem{04coron-crepeau}
J.-M. Coron,\and  E. Cr{\'e}peau, \emph{Exact boundary
  controllability of a nonlinear {K}d{V} equation with critical lengths}, J.
  Eur. Math. Soc. (JEMS) \textbf{6} (2004), no.~3, 367--398.



\bibitem{08glass-guerrero}
O. Glass, \and S. Guerrero, \emph{Some exact
controllability results for
  the linear {K}d{V} equation and uniform controllability in the
  zero-dispersion limit}, Asymptot. Anal. \textbf{60} (2008), no.~1-2, 61--100.

\bibitem{09glass-guerrero} O. Glass, \and S. Guerrero, \emph{Controllability of
the Korteweg-de Vries equation from the right Dirichlet boundary condition}, Systems Control Lett. \textbf{59} (2010), no. 7, 390--395.


\bibitem{guilleron} J.-P. Guilleron, \emph{Null controlability of a linear KdV equation on an interval with
special boundary conditions}, preprint.


\bibitem{Kato83} T. Kato, \emph{On the Cauchy problem for the
(generalized) Korteweg-de Vries equations}, Advances in
Mathematics Supplementary Studies, Studies in Applied Math.
\textbf{8} (1983), 93--128.



\bibitem{kz} E. Kramer, \and  B.-Y. Zhang,  \emph{ Nonhomogeneous boundary
value problems for the Korteweg-de Vries equation on a bounded
domain}, J. Syst. Sci. Complex,  \textbf{23}\,(2010), 499--526.

\bibitem{10RiKZ}
E. Kramer, \and I. Rivas, \and B.-Y. Zhang,
\emph{Well-posedness of a class
  of initial-boundary-value problem for the {K}ortweg-de {V}ries equation on a
  bounded domain}, ESAIM Control Optim. Calc. Var., to appear.


\bibitem{lrz} C. Laurent, L. Rosier, B.-Y.Zhang,  \emph{Control and Stabilization of the Korteweg-de Vries
Equation on a Periodic Domain}, Comm. Partial Diff. Eqns.
\textbf{35}(2010), 707--744.




\bibitem{Pazy} A. Pazy, \emph{Semigroups of linear operators and applications to
partial differential equations}, Applied Mathematical Sciences,
{\bf 44}, Springer-Verlag, New York, 1983.

\bibitem{perla} G. Perla-Menzala, C. F. Vasconcellos, \and E. Zuazua, \emph{Stabilization of the Korteweg-de Vries equa-
tion with localized damping,}  Quart. Appl. Math., 2002, {\bf 60}
(2002),  111--129.

\bibitem{pazoto} A. F. Pazoto, \emph{Unique continuation and decay for the Korteweg-de Vries equation with localized
damping,}  ESAIM Control Optim. Calc. Var., {\bf 11} (2005),
473--486.


\bibitem{ruz} I. Rivas, \and M. Usman, \and  B.-Y. Zhang, \emph{Global well-posedness and asymptotic behavior of
a class of initial-boundary-value problem of the
 Korteweg-de Vries equation on a finite domain},  Mathematical
 Control and Related Fields {\bf 1} (2011), 1, 61--81.


 \bibitem{97rosier} L. Rosier, \emph{Exact boundary controllability for the
{K}orteweg-de {V}ries equation on a bounded domain}, ESAIM Control
Optim. Calc. Var.  \textbf{2} (1997), 33--55 (electronic).

\bibitem{04rosier}
  L. Rosier, \emph{Control of the surface of a fluid by a wavemaker},
ESAIM Control
  Optim. Calc. Var. \textbf{10} (2004), no.~3, 346--380 (electronic).




\bibitem{rosier-z}  L. Rosier, \and B.-Y. Zhang,  \emph{Global stabilization
of the generalized Korteweg-de Vries equation,} SIAM J. Control
Optim., \textbf{45} (2006), 927--956.

\bibitem{Za-survey} L. Rosier, \and  B.-Y. Zhang, {\em Control and
stabilization of the Korteweg-de Vries equation: recent
progresses}, J. Syst. Sci. Complex,  {\bf 22}\,(2009), 647--682.



\bibitem{simon}
J. Simon, \emph{Compact sets in the space {$L^p(0,T;B)$}}, Ann. Mat.
Pura Appl. (4) \textbf{146} (1987), 65--96.




\bibitem{zh} B.-Y. Zhang,  \emph{Boundary stabilization of the Korteweg-de Vries  equations}, Proc. of
International Conference on Control and Estimation of Distributed
Parameter Systems: Nonlinear Phenomena, Vorau (Styria, Austria),
July 18-24, 1993,  International Series of Numerical Mathematics,
\textbf{118}, 371 -- 389, 1994.




\bibitem{Za} B.-Y. Zhang,  \emph{Exact boundary controllability of the
Korteweg-de Vries equation}, SIAM J. Cont. Optim., {\bf 37}
(1999), 543--565.

\bibitem{zhang-2} B.-Y. Zhang, \emph{Well-posedness and control of the Korteweg-de Vries equation on a bounded domain,}
Fifth International Congress of Chinese Mathematicians, Part 1, 2,
931--956, AMS/IP Stud. Adv. Math., \textbf{51}, pt. 1, 2, Amer.
Math. Soc., Providence, RI, 2012.

\bibitem{zhang-1} B.-Y. Zhang,   \emph{Hidden regularities of solutions of the
Korteweg-de Vries equation and their applications}, Preprint.
\end{thebibliography}

\end{document}